\definecolor{bleu_sombre}{rgb}{0,0,0.6}  \definecolor{rouge_sombre}{rgb}{0.8,0,0}\definecolor{vert_sombre}{rgb}{0,0.6,0}
\theoremstyle{plain}
\newtheorem{theorem}{{Theorem}}[section] %\sc{Théorème} pour avoir des petites capitales (mais ce n'est plus en gras)
\newtheorem*{theorem*}{{Theorem}}
\newtheorem{proposition}[theorem]{Proposition}
\newtheorem*{proposition*}{Proposition}
\newtheorem{corollary}[theorem]{Corollary}
\newtheorem*{corollary*}{Corollary}
\newtheorem{lemma}[theorem]{Lemma}
\newtheorem*{lemma*}{Lemma}
\theoremstyle{definition}
\newtheorem{definition}[theorem]{Definition}
\newtheorem*{definition*}{Definition}
\theoremstyle{remark}
\newtheorem{remark}[theorem]{Remark}
\newtheorem{example}[theorem]{Example}
\renewcommand{\leq}{\leqslant}	\renewcommand{\geq}{\geqslant}
\renewcommand{\bar}[1]{\overline{#1}}
\renewcommand\over[2]{{\,\buildrel #1\over#2\,}}
\newcommand{\inv}{^{-1}}
\newcommand {\limt}[2]{\xrightarrow[#1 \to #2]{}}
\newcommand{\abs}[1]{\left\vert #1\right\vert}        % valeur absolue
\newcommand{\nr}[1]{\left\Vert #1\right\Vert}         % norme
\newcommand{\innp}[2]{\left< #1 , #2 \right>}         % produit scalaire (inner product)  
\newcommand{\Dom}{\Dc}			% Domaine d'un opérateur
\newcommand{\pppg}[1] {\left< #1 \right>} 	% <x> = \sqrt{1+x^2}
\newcommand{\singl}[1]{\left\{ #1 \right\}}		% Singleton --> en fait, n'importe quel ensemble
\newcommand{\Ii}[2] {\{#1,\dots,#2\}}         %{\llbracket #1,#2 \rrbracket}	% intervalle d'entiers.
\newcommand{\R}{\mathbb{R}}		\newcommand{\C}{\mathbb{C}}
\newcommand{\N}{\mathbb{N}}
\newcommand{\1}[1]{\mathds 1 _{#1}}
\newcommand{\st}{\,:\,}					% ``tel que'' dans la définition d'un ensemble 
\newcommand{\seq}[2]{\left({#1}_{#2}\right)_{#2 \in\N}} % suite
\newcommand{\restr}[2]{\left.#1\right|_{#2}}         % #1 restreint à #2
\renewcommand{\Re}{\mathop{\rm{Re}}\nolimits}        % partie réelle
\renewcommand{\Im}{\mathop{\rm{Im}}\nolimits}        % partie imaginaire, Image
\DeclareMathOperator{\Ran}{Ran}	
\DeclareMathOperator{\Id}{Id}                        % identité
\DeclareMathOperator{\supp}{supp}                    % support
\renewcommand{\a}{\alpha}\renewcommand{\b}{\beta}\newcommand{\g}{\gamma}\newcommand{\G}{\Gamma}\renewcommand{\d}{\delta}\newcommand{\D}{\Delta}\newcommand{\e}{\varepsilon}\newcommand{\z}{\zeta} \renewcommand{\th}{\theta}\newcommand{\Th}{\Theta}\renewcommand{\l}{\lambda}\newcommand{\m}{\mu}\newcommand{\n}{\nu}\newcommand{\x}{\xi}\newcommand{\s}{\sigma}\renewcommand{\t}{\tau}\newcommand{\f}{\varphi}\newcommand{\vf}{\phi}\newcommand{\h}{\chi}\newcommand{\p}{\psi}\renewcommand{\o}{\omega}\renewcommand{\O}{\Omega}
\newcommand{\Cc}{{\mathcal C}}\newcommand{\Dc}{{\mathcal D}}\newcommand{\Ec}{{\mathcal E}}\newcommand{\Hc}{{\mathcal H}}\newcommand{\Kc}{{\mathcal K}}\newcommand{\Lc}{{\mathcal L}}\newcommand{\Rc}{{\mathcal R}}\newcommand{\Sc}{{\mathcal S}}\newcommand{\Tc}{{\mathcal T}}
\newcommand{\ad}{{\rm{ad}}}
\newcounter{stepproof}
\newcommand{\stepp}{\stepcounter{stepproof} \noindent {\bf $\bullet$}\quad }
\newcommand{\detail}[1]
{
%  \noindent \begin{quotation} \noindent \scriptsize {\bf Pour m\'emoire :}\\ #1 \end{quotation}  %ligne a enlever pour cacher ces details
}
\title{Mourre's method for a dissipative form perturbation}
\author{Julien Royer}
\address{Institut de Math\'ematiques de Toulouse \\ 118, route de Narbonne \\ 31062 Toulouse C\'edex 09 \\ France}
\email{julien.royer@math.univ-toulouse.fr}
\begin{document}

\newcommand{\Ha}{H_a}
\newcommand{\Ho}{H_0}
\newcommand{\tHo}{\tilde H_0}
\newcommand{\tH}{\tilde H}
\newcommand{\Gze}{G_z(\e)}

\begin{abstract}
We prove uniform resolvent estimates for an abstract operator given by a dissipative perturbation of a self-adjoint operator in the sense of forms. For this we adapt the commutator method of Mourre. We also obtain the limiting absorption principle and uniform estimates for the derivatives of the resolvent. This abstract work is motivated by the Schr\"odinger and wave equations on a wave guide with dissipation at the boundary. 
\end{abstract}

\maketitle

\section{Introduction}

The purpose of this paper is to prove some uniform resolvent estimates and the limiting absorption principle for a dissipative operator obtained by a form-perturbation of a self-adjoint operator. For this we prove a suitable version of the commutators method of Mourre.\\

Given a self-adjoint operator $H_0$ on a Hilbert space $\Hc$, the purpose of the Mourre method (see \cite{mourre81}) is to prove uniform estimates for the weighted resolvent 
\begin{equation} \label{intro-res-estim}
\pppg A^{-\d} (H_0 -z)\inv \pppg A^{-\d}
\end{equation}
when $z$ is close to the real axis. Here $A$ is a (self-adjoint) conjugate opearator and $\d > \frac 12$. The main assumption on $A$ concerns its commutator with $H_0$:
\begin{equation} \label{intro-hyp-mourre}
\1 J (H_0) [H_0 ,iA] \1 J(H_0) \geq \a \1 J (H_0), \quad \text{for some } \a > 0.
\end{equation}
Here $J$ is an open interval of $\R$ and $\1 J$ is the characteristic function of $J$. With this (and other) assumption(s), the operator \eqref{intro-res-estim} appears to be uniformly bounded for $\Re(z)$ in a compact subset of $J$ and $\Im(z) \neq 0$. In addition to these resolvent estimates, the method gives the limiting absorption principle: the operator \eqref{intro-res-estim} is not only uniformly bounded for $z$ close to the real axis, but for $\Re(z) \in J$ it has a limit when $\pm \Im (z) \searrow 0$.\\

The original motivation for this theory is to prove the absence of singular spectrum for $H_0$ in $J$. This is an important question in scattering theory. 

Compared to previous commutators methods (see for instance \cite{putnam, lavine69, lavine71, lavine73}), we see that the assumption \eqref{intro-hyp-mourre} on the commutator is spectrally localized with respect to $H_0$. This result proved to be very efficient for difficult problems such as the $N$-body problem (see for instance \cite{perryss81, derezinski-gerard, hunzikers00a} and references therein).\\

There are so many generalisations of the original result that we cannot mention them all, so we refer to \cite{amrein} for a general overview of the subject. See also \cite{cycon}.\\

In this paper we focus on dissipative operators. In \cite{art-mourre}, we generalized the result of \cite{mourre81} for a dissipative operator $H = H_0 - i V$, where $V \geq 0$ is relatively bounded with respect to $H_0$. In this case we cannot localize spectrally with respect to the non-selfadjoint operator $H$, but it turned out that we can obtain a similar result using the spectral projections of the self-adjoint part $H_0$. It is even possible to use the dissipative part to weaken the assumption:
\begin{equation} \label{intro-hyp-mourre-diss}
\1 J (H_0) \big( [H_0 ,iA] + \b V \big) \1 J(H_0) \geq \a \1 J (H_0), \quad \a > 0, \quad \b \geq 0.
\end{equation}
Notice that for a general maximal dissipative operator we only know that the spectrum is included in the lower half-plane $\singl{\Im(z) \leq 0}$ and the estimates for the weighted resolvent \eqref{intro-res-estim} (with $H_0$ replaced by $H = H_0 -iV$) are only available for $\Im(z) > 0$.
Then in \cite{boucletr14} we adapted to this setting the results of \cite{jensenmp84, jensen85} about the derivatives of the resolvent.  We also mention \cite{boussaidg10} for a closely related context.

The present work is motivated by the dissipative wave guide. If we consider a Schr\"odinger operator on a domain with dissipation at the boundary, we obtain a dissipative operator $H$ which cannot be written as $H_0 - iV$ for $H_0$ and $V \geq 0$ as in \cite{art-mourre}. However the quadratic form $q$ associated to $H$ can be written as $q = q_0 - i q_\Th$ where $q_0$ is the quadratic form corresponding to a self-adjoint operator and $q_\Th$ is a non-negative quadratic form relatively bounded with respect to $q_0$. This example will be discussed with more details in Section \ref{sec-guide}. Our main purpose in this paper is to prove uniform estimates for the resolvent of this kind of operators, as well as estimates for the derivatives of the resolvent and the limiting absorption principle. A closely related result has been proved in \cite{amreinbg} for self-adjoint operators. Moreover the Mourre method has already been used for wave guides (in a self-ajdoint context) in \cite{krejcirikt04}.\\

Compared to the self-adjoint analog, the first motivation for proving a dissipative Mourre theorem is not to obtain results on the absolutely continuous spectrum and the corresponding absolutely continuous subspace. Indeed, they are \emph{a priori} only defined for self-adjoint operators. However, an absolutely continuous subspace corresponding to a maximal dissipative operator $H$ on $\Hc$ has been defined in \cite{davies78} as the closure in $\Hc$ of 
\[
\singl{ \f \in \Hc \st \exists C_\f \geq 0, \forall \p \in \Hc,  \int_0^{+\infty} \abs {\innp{ e^{-itH} \f}\p}^2 \, dt \leq C_\f \nr \p_\Hc^2}. 
\]
This definition coincide with the usual one for a self-adjoint operator. Notice that there are other generalizations for the notion of absolutely continuous subspace in the litterature (see for instance \cite{nagyf, romanov04, romanov06, ryzhov97, ryzhov97b, ryzhov98}). We prove in this paper that the uniform resolvent estimates given by the Mourre theory give results on the the absolutely continuous subspace in the sense of Davies. For this we will use the dissipative generalization of the theory of relatively smooth operators in the sense of Kato.\\

This paper is organized as follows. In Section \ref{sec-def} we give precise definitions for the dissipative operator $H$ which we consider and the corresponding conjugate operator $A$. Then in Section \ref{sec-guide} we describe the applications which motivated this abstract work: the Schr\"odinger operator on a wave-guide or on a half-space with dissipation at the boundary, and then the Schr\"odinger operator on $\R^d$ whose absorption index becomes singular for low frequencies. In Section \ref{sec-res-estim-lap} we state and prove the main theorem of this paper about uniform estimates and the limiting absorption principle. Finally we discuss the resolvent estimates for the derivatives of the resolvent in Section \ref{sec-multiple} and the absolutely continuous subspace in Section \ref{sec-Hac}.\\

We close this introduction by some general notation. We set 
\[
\C_+ = \singl{z \in \C \st \Im(z) > 0},
\]
and for $I \subset \R$:
\[
\C_{I,+} = \singl{z \in \C \st \Re(z) \in I, \Im(z) > 0}.
\]
If $\Hc_1$ and $\Hc_2$ are Hilbert spaces, we denote by $\Lc(\Hc_1, \Hc_2)$ the space of bounded operators from $\Hc_1$ to $\Hc_2$.

\bigskip 

\noindent  
{\bf Acknowledgements: } This work was motivated by discussions with David Krej\v{c}i\v{r}\'{\i}k, Petr Siegl and Xue Ping Wang during a French-Czech BARRANDE Project (26473UL). I would like to thank them warmly for their stimulating questions, their helpful remarks and their kind hospitality in Prague/\v Re\v z and Nantes. This research is also partially supported by the French ANR Project NOSEVOL (ANR 2011 BS01019 01).

\section{Dissipative operators and associated conjugate operators} \label{sec-def}

In this section we recall some basic facts about dissipative operators given by form perturbations of self-adjoint operators, and we introduce the corresponding conjugate operators. Let $\Hc$ be a complex Hilbert space.

\begin{definition}
We say that an operator $T$ with domain $\Dom(T)$ on the Hilbert space $\Hc$ is dissipative (respectively accretive) if
\[
\forall \f \in \Dom(T), \quad \Im \innp{T \f}\f _\Hc \leq 0, \quad \big(\text{respectively} \quad \Re \innp{T \f}\f_\Hc \geq 0\big).
\]
Moreover $T$ is said to be maximal dissipative (maximal accretive) if it has no other dissipative (accretive) extension on $\Hc$ than itself. 
\end{definition}

Notice that the conventions for accretive and dissipative operators may be different for other authors.
With our definition, an operator $T$ is (maximal) dissipative if and only if $iT$ is (maximal) accretive. Moreover we recall that a dissipative operator $T$ is maximal dissipative if and only if $(T-z)$ has a bounded inverse on $\Hc$ for some (and hence any) $z \in \C$ with $\Im(z) > 0$. \\

Let $q_0$ be a quadratic form closed, densely defined, symmetric and bounded from below. Let $H_0$ (with domain $\Dom(H_0)$) be the corresponding selfadjoint operator (see \cite[Theorem VI.2.6]{kato}). We denote by $\Kc$ the domain of the form $q_0$ (or the form-domain of the operator $H_0$). We identify $\Hc$ with its dual, and denote by $\Kc^*$ the dual of $\Kc$. Let $\tHo \in \Lc(\Kc,\Kc^*)$ be such that $\innp{\tHo \f} \p _{\Kc^*,\Kc} = q_0(\f,\p)$ for all $\f, \p \in \Kc$.

Let $q_\Th$ be another symmetric form on $\Hc$, non-negative and $q_0$-bounded: there exists $C_\Th \geq 0$ such that for all $\f \in \Kc$ we have
\begin{equation} \label{qI-qO-bounded}
\abs{q_\Th(\f,\f)} \leq C_\Th \left( \abs{q_0(\f,\f)} +   \nr\f ^2 \right).
\end{equation}
We set $q = q_0 - iq_\Th$ and denote by $\tH$ the corresponding operator in $\Lc(\Kc,\Kc^*)$.

\begin{proposition} \label{prop-representation}
There exists a unique maximal dissipative operator $H$ on $\Hc$ such that $\Dom(H) \subset \Dom(q) = \Dom(q_0)$ and
\[
\forall \f \in \Dom(H), \quad \innp{H\f}\f_\Hc =  q(\f, \f).
\]
Moreover, the domain of $H$ is the set of $u \in \Dom(q)$ such that 
\[
 \exists f \in  \Hc, \forall \vf \in \Dom(q), \quad q(u,\vf) = \innp{f}{\vf}_\Hc.
\]
In this case $f$ is unique and we have $Hu = f$.
\end{proposition}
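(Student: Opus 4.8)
The plan is to realise $q = q_0 - iq_\Th$ as a densely defined, closed, sectorial sesquilinear form on $\Hc$ with form domain $\Kc$, and then to invoke the first representation theorem for such forms (\cite[Theorem VI.2.1]{kato}), reading off the dissipative statements at the end. Sectoriality is built into the hypotheses: since $q_0$ is bounded from below there is $\g_0 \geq 0$ with $q_0(\f,\f) \geq -\g_0\nr\f^2$, hence $\abs{q_0(\f,\f)} \leq q_0(\f,\f) + 2\g_0\nr\f^2$, and \eqref{qI-qO-bounded} then gives
\[
0 \leq q_\Th(\f,\f) \leq C_\Th\big(q_0(\f,\f) + (1+2\g_0)\nr\f^2\big), \qquad \f \in \Kc.
\]
With $\g := -(1+2\g_0)$ the numerical range of $q - \g$ lies in the sector $\singl{z \st \Re z > 0,\ \abs{\Im z} \leq C_\Th\Re z}$, and since $\Im q(\f,\f) = -q_\Th(\f,\f) \leq 0$ it even lies in the half-sector $\singl{\Re z > 0,\ -C_\Th\Re z \leq \Im z \leq 0}$. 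For closedness one notes that the form norm $\nr\f_q^2 := q_0(\f,\f) + (2+2\g_0)\nr\f^2$ attached to $q$ is equivalent to the form norm of $q_0$, so $\Kc$ is complete for it and a sectorial form with complete form domain is closed; density of $\Kc$ in $\Hc$ is inherited from $q_0$, and polarising \eqref{qI-qO-bounded} together with Cauchy--Schwarz for the non-negative form $q_\Th$ shows $q$ is bounded on $\Kc \times \Kc$---this is exactly what lets one regard $q$ as $\tH \in \Lc(\Kc,\Kc^*)$.

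Next I would define $H$ by the characterisation in the statement: $\Dom(H) := \singl{u \in \Kc \st \exists f \in \Hc,\ q(u,\vf) = \innp f{\vf}_\Hc \text{ for all } \vf \in \Kc}$, with $Hu := f$; the element $f$ is unique because $\Kc$ is dense in $\Hc$, and $H$ is simply the part in $\Hc$ of $\tH \in \Lc(\Kc,\Kc^*)$. Taking $\vf = u$ gives $\innp{Hu}{u}_\Hc = q(u,u)$, so $\Im\innp{Hu}{u}_\Hc = -q_\Th(u,u) \leq 0$ and $H$ is dissipative. For maximal dissipativity, fix $\l$ with $\Im\l > 0$ and consider the bounded sesquilinear form $b_\l(\f,\vf) := q(\f,\vf) - \l\innp\f{\vf}_\Hc$ on $\Kc$; it is coercive, since for $t > 0$ small enough
\[
-\Im b_\l(\f,\f) + t\,\Re b_\l(\f,\f) = q_\Th(\f,\f) + \Im(\l)\nr\f^2 + t\big(q_0(\f,\f) - \Re(\l)\nr\f^2\big) \geq c\,\nr\f_q^2,
\]
where $q_0(\f,\f) \geq -\g_0\nr\f^2$ is used to keep $t\,q_0(\f,\f)$ from spoiling the lower bound, whence $\abs{b_\l(\f,\f)} \gtrsim \nr\f_q^2$. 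By the Lax--Milgram lemma $b_\l$ is an isomorphism $\Kc \to \Kc^*$; restricting it to $f \in \Hc \subset \Kc^*$ yields a unique $u \in \Dom(H)$ with $(H-\l)u = f$. Hence $(H-\l)$ has a bounded inverse on $\Hc$ for every $\l$ with $\Im\l > 0$, which is precisely the criterion recalled in Section~\ref{sec-def} for $H$ to be maximal dissipative (and, in passing, $\Dom(H)$ is dense in $\Hc$).

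Uniqueness is the point I expect to require the most care. If $H'$ is maximal dissipative with $\Dom(H') \subset \Kc$ and $\innp{H'\f}{\f}_\Hc = q(\f,\f)$ on $\Dom(H')$, then polarisation gives $\innp{H'u}{v}_\Hc = q(u,v)$ for $u,v \in \Dom(H')$, and the crux is to promote this to $\innp{H'u}{\vf}_\Hc = q(u,\vf)$ for \emph{every} $\vf \in \Kc$---that is, to show that $\Dom(H')$ is a core for the form $q$. Granting it, $H'$ is a restriction of $\tH$ with values in $\Hc$, so $H' \subset H$; and since $H - \l$ is a bijection onto $\Hc$ for $\Im\l > 0$ while $H' - \l$ is injective (because $\Im\innp{(H'-\l)u}{u}_\Hc \leq -\Im(\l)\nr u^2$), the inclusion forces $\Dom(H') = \Dom(H)$, hence $H' = H$. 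Establishing the core property is where the maximal dissipativity of $H'$---not merely the fact that it is dissipative with numerical range in the half-sector above---has to be used, through the surjectivity of $H' - \l$ for $\Im\l > 0$, comparing a given $u' \in \Dom(H')$ with the solution in $\Dom(H)$ of the same equation. I regard this as the main obstacle; the rest is the standard machinery of closed sectorial forms together with Lax--Milgram.
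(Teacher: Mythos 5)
Your existence and maximality argument is correct and is essentially the paper's route carried out by hand: where the paper shifts $q_0$ by $\g$ to make it non-negative, quotes Lemma \ref{lem-sect-form} for sectoriality and closedness of $q_0+\g-iq_\Th$, and then invokes the first representation theorem (Theorem VI.2.1 in \cite{kato}) together with the remark that a maximal accretive, dissipative operator is maximal dissipative, you reprove these ingredients directly (sectoriality and closedness of $q$, then Lax--Milgram coercivity of $b_\l$ for $\Im\l>0$ to get bounded invertibility of $H-\l$). That part, including the identification of $\Dom(H)$ with the set described in the ``Moreover'' clause, is fine.

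The genuine gap is the uniqueness, which you explicitly leave as ``the main obstacle'', and the route you sketch cannot be completed: the statement you are trying to prove there is false at this level of generality. Take $\Hc=L^2(0,1)$, $q_\Th=0$ and $q_0(u)=\int_0^1\abs{u'}^2$ with $\Kc=H^1(0,1)$, so that $H$ is the Neumann Laplacian. The Dirichlet Laplacian $H'$, with $\Dom(H')=H^2(0,1)\cap H^1_0(0,1)\subset\Kc$, is self-adjoint, hence maximal dissipative, and satisfies $\innp{H'\f}{\f}_\Hc=q_0(\f,\f)$ on its domain; yet $H'\neq H$, and $\Dom(H')$ is not a core of $q_0$ (its form closure is the Dirichlet form on $H^1_0$). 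So the diagonal identity alone does not single out $H$, and since the counterexample $H'$ is itself maximal dissipative, no argument using surjectivity of $H'-\l$ and comparison with $H$ can establish the core property you need. The uniqueness the proposition actually carries --- and the one the paper obtains for free, since it is the uniqueness clause of Kato's Theorem VI.2.1 --- is uniqueness among operators satisfying the stronger, non-diagonal relation $q(u,\vf)=\innp{Hu}{\vf}_\Hc$ for all $\vf\in\Dom(q)$, i.e.\ exactly the ``Moreover'' characterisation, which says that $H$ is the part of $\tH$ in $\Hc$ and is therefore unique by a one-line argument. Your construction already defines $H$ this way, so the repair is to formulate and prove uniqueness under that relation rather than attempting to upgrade the diagonal identity via a core argument.
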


We recall from \cite{art-diss-schrodinger-guide} the following lemma:

\begin{lemma} \label{lem-sect-form}
Let $q_R$ be a non-negative, densely defined, closed form on a Hilbert space $\Hc$. Let $q_I$ be a symmetric form relatively bounded with respect to $q_R$. Then the form $q_R - i q_I$ is sectorial and closed.
\end{lemma}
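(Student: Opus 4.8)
The plan is to verify the two assertions separately, starting with sectoriality, which is essentially a numerical-range computation, and then deriving closedness from the relative form-boundedness together with a standard stability argument. Write $q = q_R - i q_I$ with form-domain $\Dom(q) = \Dom(q_R)$. For $u$ in this domain, $q(u,u) = q_R(u,u) - i q_I(u,u)$, where $q_R(u,u) \geq 0$ and $q_I(u,u) \in \R$ since $q_I$ is symmetric. Thus the numerical range $\singl{q(u,u) \st u \in \Dom(q), \nr u = 1}$ lies in the closed lower half-plane, and I must show it lies in a translated sector $\singl{z \st \abs{\Im z} \leq M(\Re z + c)}$ for suitable $M, c$. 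The relative bound \eqref{qI-qO-bounded}-type hypothesis gives $\abs{q_I(u,u)} \leq a\, q_R(u,u) + b \nr u^2$ for some $a, b \geq 0$ (here $a$ may be taken to be $C_\Th$ or any constant, since we do not need a small relative bound). Hence $\abs{\Im q(u,u)} = \abs{q_I(u,u)} \leq a\, \Re q(u,u) + b$, which is exactly the sector condition with vertex at $-b$ and half-angle $\arctan(a)$. This establishes that $q$ is sectorial.

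For closedness, I would argue as follows. Since $q_R$ is closed and $q_I$ is $q_R$-bounded, the bound above shows in particular that the form $q_I$ (viewed as sesquilinear) is relatively bounded with respect to $q_R$ in the form sense; when the relative bound can be taken strictly less than $1$ this is the classical KLMN-type stability result (\cite[Theorem VI.1.33]{kato}) giving that $q_R + q_I$ (and likewise $q_R - iq_I$, or $q_R \pm i\alpha q_I$) remains sectorial and closed. In general the relative bound need not be small, but one circumvents this exactly because the perturbation is purely imaginary: the graph norm of $q$ satisfies $\nr u_q^2 \sim \Re q(u,u) + \nr u^2 = q_R(u,u) + \nr u^2 + (\text{const})\nr u^2$, which is equivalent to the form-norm of $q_R$. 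Concretely, if $(u_n)$ is a sequence in $\Dom(q)$ with $u_n \to u$ in $\Hc$ and $q(u_n - u_m, u_n - u_m) \to 0$, then taking real parts shows $q_R(u_n - u_m, u_n - u_m) \to 0$, so by closedness of $q_R$ we get $u \in \Dom(q_R) = \Dom(q)$ and $u_n \to u$ in the $q_R$-form-norm; the relative bound then forces $q_I(u_n - u, u_n - u) \to 0$ as well, whence $q(u_n, u_n) \to q(u,u)$ and $q$ is closed.

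The main obstacle is the second step when the relative form-bound $a$ is not small: one cannot directly quote the KLMN perturbation theorem. The resolution, as indicated, is to exploit that the perturbation is $-iq_I$ rather than a form with a real part — taking real parts of the Cauchy sequence condition isolates $q_R$ cleanly and lets the relative boundedness do the rest. One should also check the bookkeeping that the abstract hypothesis on $q_I$ in the lemma ("$q_I$ symmetric and relatively bounded with respect to $q_R$") indeed yields the inequality $\abs{q_I(u,u)} \leq a\, q_R(u,u) + b\nr u^2$ used throughout — this is just the definition of relative form-boundedness and the polarization identity, so it is routine. A minor point worth a sentence is that sectoriality of $q$ in this convention (numerical range in a downward-pointing translated sector) corresponds, after multiplication by $i$ or reflection, to the sectoriality convention of \cite{kato}, so that the cited results apply verbatim.
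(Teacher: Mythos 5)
Note first that the paper does not actually prove this lemma: it is stated with the attribution ``We recall from \cite{art-diss-schrodinger-guide} the following lemma,'' and no argument is given here. So there is no in-paper proof to compare against; I can only assess your argument on its own terms.

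Your argument is correct and is the natural one. The crux is the observation that $\Re q = q_R$ \emph{exactly}, so the form norm of $q$ (in Kato's sense, $\Re q(u,u) + (1-\gamma)\nr u^2$) is the form norm of $q_R$ up to an equivalent rescaling of the $\nr u^2$ term; closedness of $q$ is therefore equivalent to closedness of $q_R$, with no smallness condition on the relative bound needed. You correctly identify that one cannot invoke the KLMN/stability theorem directly (relative bound not assumed $<1$), and that the purely imaginary perturbation resolves this. Your Cauchy-sequence spelling-out of the equivalence is fine. Two small slips that do not affect the proof: (i) you claim the numerical range lies in the closed lower half-plane, but the lemma only assumes $q_I$ symmetric, not $q_I \geq 0$, so $\Im q(u,u) = -q_I(u,u)$ can have either sign -- fortunately you never use this, only the two-sided bound $\abs{\Im q} \leq a\Re q + b$; (ii) the vertex of the sector is $-b/a$ (when $a>0$), not $-b$. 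Both are cosmetic.
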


\begin{proof}[Proof of Proposition \ref{prop-representation}]
There exists $\g \geq 0$ such that $q_0 + \g$ is non-negative. According to Lemma \ref{lem-sect-form}, the form $q_\g := q_0 + \g -i q_\Th$ is sectorial and closed. We denote by $H_\g$ the maximal accretive operator associated to $q_\g$ by the representation theorem (see Theorem VI.2.1 in \cite{kato}). This operator is dissipative. Since it is maximal accretive, $(-1+i)$ belongs to its resolvent set, and hence it is also maximal dissipative. Then it remains to consider $H = H_\g - \g$.
\end{proof}

It is important to note that the form $q_\Th$ is not assumed to be closable, so it is not associated to any operator on $\Hc$. However it defines an operator $\Th \in \Lc(\Kc,\Kc^*)$ and we have
\[
\tH = \tHo - i \Th \quad \text {in } \Lc(\Kc, \Kc^*).
\]
Compared to the setting of \cite{art-mourre}, this equality is not assumed to have a sense in $\Lc(\Dom(H_0),\Hc)$. We will see in Paragraph \ref{sec-guide} an example of operator of this form which cannot be written as $H_{s.a.} - iV$ for $H_{s.a.}$ self-adjoint and $V$ self-adjoint, non-negative and $H_{s.a.}$-bounded with relative bound less than 1. Our purpose in this paper is to recover the results of \cite{art-mourre} in this case.\\

According to the Lax-Milgram Theorem the operators $i(\tH-z)$ and hence $(\tH-z)$ have bounded inverses in $\Lc(\Kc^*,\Kc)$ for all $z \in \C_+$. Moreover we have in $\Lc(\Kc^*,\Kc)$ the resolvent identities 
\begin{equation} \label{eq-res-identity}
\begin{aligned}
(\tH - z)\inv
& = (\tHo-z)\inv + i (\tHo -z)\inv \Th (\tH -z)\inv\\
& = (\tHo -z)\inv + i (\tH -z)\inv \Th (\tHo -z)\inv.
\end{aligned}
\end{equation}
Notice that for $f \in \Hc \subset \Kc^*$ we have 
\[
(H-z)\inv f = (\tH -z)f .
\]

We now introduce the conjugate operator $A$ for $H$. Before the definition we recall from \cite{amreinbg} (see Lemma 1.1.4) the following result:

\begin{lemma} \label{lem-Ec}
Let $A$ be a self-adjoint operator on $\Hc$. Assume that $\Kc$ is left invariant by $e^{-itA}$ for all $t \in \R$. Then the domain of the generator of the unitary group $\restr{e^{-itA}}{\Kc}$ is 
\[
\singl{\f \in \Dom(A) \cap \Kc \st A\f \in \Kc}.
\]
\end{lemma}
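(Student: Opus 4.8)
The plan is to let $G$ denote the generator of the restricted group $\restr{e^{-itA}}{\Kc}$, so that $\Dom(G) \subset \Kc$ and $G\f = \lim_{t\to 0} t\inv(e^{-itA}\f - \f)$ with the limit taken in $\Kc$, and then to prove by a double inclusion that $\Dom(G) = \Ec$, where $\Ec := \singl{\f \in \Dom(A) \cap \Kc \st A\f \in \Kc}$; as a byproduct one also gets $G = -iA$ on $\Ec$. Throughout I use that $\Kc$ is continuously embedded in $\Hc$ and that $\restr{e^{-itA}}{\Kc}$ is a $C_0$-group on $\Kc$ (this last point is where the only genuine work lies; see the final paragraph).

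For the inclusion $\Dom(G) \subset \Ec$ I would take $\f \in \Dom(G)$. Then $t\inv(e^{-itA}\f - \f)$ converges to $G\f$ in $\Kc$ as $t\to 0$; since the embedding $\Kc \hookrightarrow \Hc$ is continuous, the same difference quotients converge to the same limit in $\Hc$, so $\f \in \Dom(A)$ and $-iA\f = G\f \in \Kc$. Thus $A\f \in \Kc$ and $\f \in \Ec$, and $G$ acts as $-iA$ on its domain.

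For the reverse inclusion $\Ec \subset \Dom(G)$, fix $\f \in \Ec$ and set $u(t) := e^{-itA}\f$. By the invariance hypothesis $u(t) \in \Kc$ for every $t \in \R$, and $t \mapsto u(t)$ is $\Kc$-continuous because the restricted group is strongly continuous. Since $\f \in \Dom(A)$, the map $u$ is of class $C^1$ with values in $\Hc$, with $u'(t) = -iA e^{-itA}\f = -i e^{-itA}(A\f)$; and since $A\f \in \Kc$, the map $v(t) := -i e^{-itA}(A\f)$ is continuous with values in $\Kc$. Now the fundamental theorem of calculus in $\Hc$ gives $u(t) - \f = \int_0^t v(s)\,ds$, an identity in $\Hc$; but $v$ is $\Kc$-valued and $\Kc$-continuous, so this Bochner integral also exists in $\Kc$, and by continuity of the embedding it agrees with the $\Hc$-valued one. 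Hence the identity holds in $\Kc$, and $\Kc$-continuity of $v$ at $0$ then shows that $u$ is differentiable at $0$ in $\Kc$ with $u'(0) = v(0) = -iA\f$. Therefore $\f \in \Dom(G)$, which completes the double inclusion.

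The hard part — really the only non-routine ingredient — is the standing claim that $\restr{e^{-itA}}{\Kc}$ is indeed a $C_0$-group on $\Kc$: that each $e^{-itA}$ maps $\Kc$ boundedly into itself, with operator norms on $\Kc$ locally uniformly bounded in $t$, and that $t \mapsto e^{-itA}\f$ is $\Kc$-continuous for every $\f \in \Kc$. Boundedness of each $e^{-itA}$ on $\Kc$ follows from the closed graph theorem (its graph in $\Kc\times\Kc$ is closed because $\Kc$-convergence implies $\Hc$-convergence and $e^{-itA}$ is bounded on $\Hc$); local uniform boundedness and $\Kc$-strong continuity then follow from a standard uniform boundedness / Baire-category argument, exactly as in \cite{amreinbg}. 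Once this is granted, the two inclusions above are routine $C_0$-group bookkeeping, the one mildly substantial point being the upgrade from ``$u$ is $C^1$ in $\Hc$ with a continuous $\Kc$-valued derivative'' to ``$u$ is $C^1$ in $\Kc$'', carried out via the Bochner integral as above.
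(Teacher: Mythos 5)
Your argument is correct, and it is essentially the standard one: the paper gives no proof of this lemma at all, it simply recalls it from \cite{amreinbg} (Lemma 1.1.4), and what you have written is precisely the argument behind that citation. Both inclusions are sound: $\Dom(G)\subset\Ec$ via the continuous embedding $\Kc\hookrightarrow\Hc$ and Stone's theorem (convergence of the difference quotients in $\Hc$ characterizes $\Dom(A)$), and $\Ec\subset\Dom(G)$ via the $\Kc$-valued Bochner integral of $s\mapsto -ie^{-isA}(A\f)$, which is legitimate since that map is $\Kc$-continuous and the two integrals coincide through the embedding. The one point worth flagging in the ingredient you defer to \cite{amreinbg} is that ``closed graph plus uniform boundedness plus Baire'' is not by itself enough for a general Banach space: to make the sets $\{t:\nr{e^{-itA}\f}_\Kc\leq n\}$ closed one needs lower semicontinuity of $t\mapsto \nr{e^{-itA}\f}_\Kc$ along $\Hc$-convergent sequences, which uses weak compactness of balls in $\Kc$, i.e.\ its reflexivity; here $\Kc$ is a form domain and hence a Hilbert space, so this is available, but it should be said (it is exactly the hypothesis under which the cited result of \cite{amreinbg} holds).
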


Given $t \in \R$, we remark that under the assumption of Lemma \ref{lem-Ec} we can extend by duality the operator $e^{-itA}$ to $\Kc^*$, which is also left invariant.

\begin{definition} \label{def-conj}
Let $A$ be a self-adjoint operator on $\Hc$. We say that $A$ is a conjugate operator (in the sense of forms) to $H$ on the interval $J$ if the following conditions are satisfied:
\begin{enumerate} [(i)]
\item \label{item-Kc-invariant}
The form domain $\Kc$ is left invariant by $e^{-itA}$ for all $t \in \R$. We denote by $\Ec$ the domain of the generator of $\restr{e^{-itA}}{\Kc}$.
\item The commutators $B_0 = [\tilde H_0,iA]$ and $B = [\tilde H,iA]$, \emph{a priori} defined as operators in $\Lc(\Ec,\Ec^*)$, extend to operators in $\Lc(\Kc,\Kc^*)$.
\item The commutator $[B,iA]$, \emph{a priori} defined as an operator in $\Lc(\Ec,\Ec^*)$, extends to an operator in $\Lc(\Kc,\Kc^*)$.
\item There exist $\a > 0$ and $\b \geq 0$ such that 
\begin{equation} \label{hyp-mourre}
\1 J (H_0) (B_0 + \b \Th) \1 J (H_0) \geq \a \1 J (H_0).
\end{equation}
\end{enumerate}
\end{definition}

\begin{remark}
If $H = H_0 -iV$ and $A$ is a conjugate operator for $H$ on $J$ in the sense of Definition 2.3 in \cite{art-mourre} then $H$ can be seen as a perturbation of $H_0$ in the sense of forms and $A$ is a conjugate operator for $H$ on $J$ in the sense of Definition \ref{def-conj}.
\end{remark}

When dealing with a family of operators indexed by a parameter $\l$, it may be important to track the dependance in $\l$ of all the quantities which appear in this definition. In this case we will refer to the following refined version of Definition \ref{def-conj}:

\begin{definition} \label{def-conj-refined}
We say that $A$ is a conjugate operator (in the sense of forms) to $H$ on $J$ and with bounds $(\a,\b,\Upsilon) \in ]0,1] \times \R_+ \times \R_+$ if all the assumptions of Definition \ref{def-conj} are satisfied (in particular $\a$ and $\b$ are the constants which appears in \eqref{hyp-mourre}) and moreover
\[
\nr {B} \leq \sqrt \a \Upsilon , \quad \nr {B + \b \Th} \nr{B_0} \leq \a  \Upsilon \quad \text{and} \quad \nr{[B,A]} + \b \nr{[\Th,A]} \leq \a \Upsilon,
\]
where all the norms are in $\Lc(\Kc,\Kc^*)$.
\end{definition}

These definitions include the assumptions we will need to prove a uniform estimate and the limiting absorption principle for the resolvent of $H$. However it is known that in order to estimate the derivatives of the resolvent we have to control more commutators of $H$ with the conjugate operator $A$:

\begin{definition} \label{def-conj-N}
Let $N \in \N^*$. We set $B_1 = B$. We say that the selfadjoint operator $A$ is a conjugate operator for $H$ on $J$ up to order $N$ if it is a conjugate operator in the sense of Definition \ref{def-conj} and if for all $n \in \Ii 1 N$ the operator $[B_{n},iA]$ defined (inductively) in $\Lc(\Ec,\Ec^*)$ extends to an operator in $\Lc(\Kc,\Kc^*)$, which we denote by $B_{n+1}$. 
\end{definition}

Again, for a family of operators it may be useful to control the size of these multiple commutators:

\begin{definition} \label{def-conj-N-refined}
We say that $A$ is a conjugate operator for $H$ on $J$ with bounds $(\a ,\b ,\Upsilon_N)$ up to order $N$ if it is a conjugate operator for $H$ on $J$ with bounds $(\a ,\b ,\Upsilon)$ in the sense of Definition \ref{def-conj-refined}, if it is a conjugate operator up to order $N$ in the sense of Definition \ref{def-conj-N}, and if 
\begin{equation*} %\label{estim-CBN}
\Upsilon + \frac 1 \a \sum_{n=2}^{N+1} {\nr{B_n}_{\Lc(\Kc,\Kc^*)}}  \leq \Upsilon_N .
\end{equation*}
\end{definition}

\section{The dissipative wave guide and other applications} \label{sec-guide}

Before going further, we give some applications to illustrate the definitions of Section \ref{sec-def} and to motivate the upcoming abstract theorems. \\

We first recall that for the free laplacian $-\D$ on $\R^d$ an example of conjugate operator is given by the generator of dilations
\begin{equation} \label{def-A}
A = -\frac i 2 (x \cdot \nabla + \nabla \cdot x) = - i (x\cdot \nabla) - \frac {id}2. 
\end{equation}
Indeed for all $t \in \R$ the dilation $e^{-itA}$ maps $u$ to $e^{-itA} u : x \mapsto e^{-\frac {dt}2} u (e^{-t} x)$. In particular it leaves invariant the form domain $H^1(\R^d)$. Moreover a straightforward computation gives $[-\D,iA] = -2\D$, so $A$ is conjugate to $-\D$ on any interval $J \Subset \R_+^*$ with bound $\a = 2 \inf(J) > 0$. The study of more general Schr\"odinger operators is usually inspired by this model case.\\

The first example that motivated this abstract work is the following: let $\O \subset \R^{d}$ be a wave guide of the form $\O= \R^{p} \times \o$ where $p \in \Ii 1 {d-1}$ and $\o$ is a smooth open bounded subset of $\R^{d-p}$. A general point in $\O$ is denoted by $(x,y)$ with $x \in \R^p$ and $y \in \o$. Let $a \in W^{1,\infty}(\partial \O)$. We consider on $L^2(\O)$ the operator
\begin{equation} \label{def-Ha}
H_a = -\D
\end{equation}
with domain 
\begin{equation} \label{def-Dom-Ha}
\Dom(H_a) = \singl{u \in H^2(\O) , \partial_\n u = i a u \text{ on } \partial \O}.
\end{equation}
We could also consider a (dissipative) perturbation of the free laplacian in the interior of $\O$. This operator appears in the spectral analysis of the wave equation 
\begin{equation} \label{eq-wave}
\begin{cases}
\partial_t ^2 w  - \D w  = 0 & \text{on } \R_+ \times  \O,\\
\partial_\n w + a \partial_t w = 0 & \text{on } \R_+ \times  \partial \O,\\
w(0,\cdot ) = w_0 , \quad \partial_t w (0,\cdot) = w_1 & \text{on }\O,
\end{cases}
\end{equation}
or the Schr\"odinger equation 
\begin{equation} \label{eq-schrodinger}
\begin{cases}
-i \partial_t  u  - \D u  = 0 & \text{on } \R_+ \times  \O,\\
\partial_\n u = i a u  & \text{on } \R_+ \times  \partial \O,\\
u(0,\cdot ) = u_0 & \text{on }\O. 
\end{cases}
\end{equation}
In \cite{art-diss-schrodinger-guide} we have studied \eqref{eq-schrodinger} in the particular case where $\dim \o = 1$ and $a$ is greater than a positive constant at least on one side of the boundary. In this situation it was possible to compute almost explicitely some spectral properties of $H_a$. In particular we proved that $\s(H_a)$ is included in $\singl{z \in \C \st \Im(z) < -\g}$ for some $\g > 0$ with a uniform estimate for the resolvent on the real axis, which gives exponential decay for the solution of \eqref{eq-schrodinger}. When the absorption index $a$ is not that strong, for instance if it is compactly supported on $\partial \O$, the essential spectrum will stay included in the real axis. Then we need more general tools to prove uniform resolvent estimates up to the real axis in this case.

We know from \cite{art-diss-schrodinger-guide} that the operator $H_a$ is maximal dissipative. The corresponding quadratic form is 
\begin{equation} \label{def-qa}
q_a : \f \mapsto \int_\O \abs {\nabla \f}^2  - i \int_{\partial \O} a \abs \f =: q_0 (\f) -i q_\Th (\f).
\end{equation}
It is defined on $\Kc = H^1(\O)$. The self-adjoint part $q_0$ is associated with the operator $H_0$ (defined as $H_a$ with the Neumann boundary condition $a = 0$). However the imaginary part is not associated to any operator on $L^2(\O)$. Since $\Dom(H_a) \neq \Dom(H_a^*) = \Dom(H_{-a})$ there is no hope to write $H_a$ as $H_{s.a.} -iV$ for some self-adjoint operator $H_{s.a.}$ and some non-negative self-adjoint operator $V$ relatively bounded with respect to $H_{s.a.}$ with relative bound less than 1 as is required in \cite{art-mourre, boucletr14}.\\

We define $\tilde H_0$ and $\tilde H_a$ in $\Lc(\Kc,\Kc^*)$ as in Section \ref{sec-def}. Let $L_\o$ denote the Laplacian with Neumann boundary condition on the compact $\o$. $L_\o$ is self-adjoint on $L^2(\o)$ with compact resolvent. We denote by $0 = \l_0 < \l_1 \leq \l_2\leq \dots$ its eigenvalues and by $\seq \f n$ a corresponding sequence of orthonormal eigenfunctions. The spectrum of $H_0$ is given by $\bigcup_{n \in \N} \l_n + \R_+ = \R_+$, and the eigenvalues of $L_\o$ are the thresholds in the spectrum of $H_0$. We denote by $\Tc$ the set of these thresholds. Assume that $u \in \Dom(H_0)$ and $\l \in \R$ are such that $H_0 u = \l u$. Let 
\[
\hat u : (\x, y) \in \R^p \times \o  \mapsto \int_{x \in \R^p} e^{-i\innp x \x} u(x,y) \, dx
\]
be the partial Fourier transform of $u$ with respect to $x$. Then for almost all $\x \in \R^p$ we have
\[
(L_\o + \abs \x^2 - \l) \hat u (\x,\cdot) = 0.
\]
Since $L_\o$ has a discrete set of eigenvalues, $\hat u (\x,\cdot)$ vanishes for $\x$ outside a set of measure 0 in $\R^p$. This proves that $u=0$, and hence $H_0$ has no eigenvalue.

We denote by $\nabla _x$ the gradient with respect to the first $p$ variables on $\O$.
Then we consider the generator $A_x$ of dilations in the first $p$ variables, defined by 
\[ 
(A_x u) (x,y) = -i x\cdot \nabla_x u (x,y) - \frac {ip}2.
\]
Then for $u \in L^2(\O)$, $t \in \R$ and $(x,y) \in \R^p \times \o$ we have
\begin{equation} \label{dilatation-p}
e^{-itA_x} u (x,y) = e^{ -\frac {tp}2} u\left( e^{-t} x,y \right).
\end{equation}

\begin{proposition} \label{prop-guide}
Let $J \subset \R_+^* \setminus \Tc$ be a compact interval. Let $N \in \N^*$. Assume that for $\g \in \N^p$ with $\abs \g \leq N$ we have
\begin{equation} \label{hyp-a-A}
\abs{\partial_{x}^\g a (x,y)} \leq c_\g \pppg {x}^{-\abs{\g}}.
\end{equation}
Then $A$ is conjugate to $\Ha$ on $J$ up to order $N$.
\end{proposition}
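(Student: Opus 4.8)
The plan is to verify the four conditions of Definition \ref{def-conj} together with the finite-order commutator hypothesis of Definition \ref{def-conj-N} for the operator $A = A_x$ (the generator of dilations in the first $p$ variables) and $H = \Ha$, with self-adjoint part $H_0$ and absorption form $q_\Th(\f) = \int_{\partial\O} a\abs\f^2$. Since everything happens only in the $x$-variables and the boundary lies in the $y$-directions, the strategy is: all commutators $[\tHo,iA]$, $[B_0,iA]$, \dots\ reduce to the familiar dilation computation on $-\D_x$, while commutators with $\Th$ reduce to the action of $A_x$ on the multiplication operator $a$, which is controlled by hypothesis \eqref{hyp-a-A}.

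First I would check invariance of $\Kc = H^1(\O)$ under $e^{-itA_x}$; by the explicit formula \eqref{dilatation-p} this is immediate (dilation in $x$ preserves $H^1$, with norm growing at most exponentially in $t$), so \eqref{item-Kc-invariant} holds and $\Ec$ is the natural subspace given by Lemma \ref{lem-Ec}. Next, for the commutators: writing $-\D = -\D_x - \D_y$, the $y$-part commutes with $A_x$, and the standard dilation identity gives $B_0 = [\tHo,iA] = -2\D_x$ as an element of $\Lc(\Kc,\Kc^*)$, which is clearly bounded $H^1\to H^{-1}$. Iterating, $[B_0,iA] = -4\D_x$, and more generally the $n$-th commutator is $(-2)^n\D_x \in \Lc(\Kc,\Kc^*)$, giving condition (iii) and the higher-order extensions in Definition \ref{def-conj-N} for the $H_0$-part. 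For the dissipative part one computes $[\Th,iA]$ as the form $\f\mapsto \int_{\partial\O}(A_x^* a\,\f)\bar\f$, i.e. up to lower-order terms it is the boundary form with density $-x\cdot\nabla_x a$; hypothesis \eqref{hyp-a-A} with $\abs\g=1$ gives $\abs{x\cdot\nabla_x a}\leq c\,\pppg x^{\abs\g}\pppg x^{-1}=c$, bounded, so by the trace theorem $[\Th,iA]\in\Lc(\Kc,\Kc^*)$; the $n$-fold iteration produces boundary densities built from $x$-derivatives of $a$ of order up to $n$, each bounded thanks to \eqref{hyp-a-A}, so all the required commutators up to order $N$ are in $\Lc(\Kc,\Kc^*)$. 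This also shows $B = [\tH,iA] = -2\D_x - i[\Th,iA] \in \Lc(\Kc,\Kc^*)$.

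Finally, for the Mourre estimate \eqref{hyp-mourre}: since $B_0 = -2\D_x$ and $\b=0$ suffices, I would use the partial Fourier transform in $x$ and the spectral decomposition of $L_\o$ (already set up in the excerpt). On the spectral subspace $\1_J(H_0)$, writing $H_0 = L_\o + \abs{\x}^2$ fibrewise, the constraint $H_0\in J$ with $J\Subset\R_+^*\setminus\Tc$ forces $\abs\x^2 \geq \inf(J) - \max\{\l_n : \l_n < \inf(J)\} =: \a/2 > 0$ on the relevant fibres (here one uses $J\cap\Tc=\vide$ to keep this gap strictly positive). Hence $\1_J(H_0)(-2\D_x)\1_J(H_0) = \1_J(H_0)(2\abs\x^2)\1_J(H_0) \geq \a\,\1_J(H_0)$, which is exactly \eqref{hyp-mourre} with $\b=0$.

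I expect the main obstacle to be the careful bookkeeping of the iterated commutators $[B_n,iA]$ in the weak sense on $\Ec$: one must justify that each step genuinely extends from $\Lc(\Ec,\Ec^*)$ to $\Lc(\Kc,\Kc^*)$, in particular for the boundary term $\Th$ where the commutator is only a \emph{form} on $\partial\O$ and not associated to any operator on $L^2(\O)$. The key point making this tractable is that $[\Th,iA]$ at each order is again a boundary multiplication form whose density is a finite linear combination of $\partial_x^\g a$ with $\abs\g\leq n$, all bounded by \eqref{hyp-a-A}, so that the trace theorem $H^1(\O)\hookrightarrow L^2(\partial\O)$ closes the estimate uniformly at every order up to $N$.
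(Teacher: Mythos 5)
Your proposal is correct and follows essentially the same route as the paper: verify invariance of $H^1(\O)$ under dilations, compute the iterated commutators explicitly (interior part reduces to $\ad^n_{iA_x}(-\D_x) = -2^n\D_x$, boundary part reduces to the bounded densities $(-x\cdot\nabla_x)^n a$ controlled by \eqref{hyp-a-A} and the trace theorem), and establish the Mourre estimate via the fibre decomposition $H_0 = L_\o + \abs\x^2$ over the spectral gap between consecutive thresholds. Two cosmetic remarks: the general iterated commutator is $-2^n\D_x$, not $(-2)^n\D_x$ (for $n=2$ this disagrees in sign with your own correct computation $[B_0,iA]=-4\D_x$), and the intermediate bound should read $\abs{x\cdot\nabla_x a} \lesssim \pppg x \cdot \pppg x^{-1}$, but neither affects the argument.
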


\begin{proof}
\stepp According to \eqref{dilatation-p} the form domain $\Kc = H^1(\O)$ is left invariant by $e^{-itA_x}$ for any $t \in \R$. Let $\f,\p \in \Sc(\O)$. If $-\D_x$ denotes the Laplacian in the first $p$ directions we have 
\[
[-\D,iA_x] = [-\D_x,iA_x] = -2\D_x \quad \text{(on $\O$)} \quad \text{and} \quad [a , i A_x] = - (x \cdot \nabla_x) a  \quad \text{(on $\partial \O$)},
\]
so
\begin{align*}
\innp{ [\tilde H_a,iA_x] \f } \p 
= 2 \innp{ \nabla_x \f }{ \nabla_x \p}_{L^2(\O)} + i \int_{\partial \O} (x \cdot \nabla_x a) \f \bar \p .
\end{align*}
With $a = 0$ we simply obtain
\[
\innp{ [\tilde H_0,iA_x] \f } \p = 2 \innp{ \nabla_x \f }{ \nabla_x \p}_{L^2(\O)}.
\]
We similarly compute for any $n \in \Ii 0 N$
\[
\innp{\ad_{iA_x}^n (\tilde H_a) \f}\p = 2^n \innp{ \nabla_x \f }{ \nabla_x \p}_{L^2(\O)} - i \int_{\partial \O} \big((-x \cdot \nabla_x)^n a\big) \f \,\bar \p  .
\]
This implies in particular that the forms $\ad_{iA_x}(\tilde H_0)$ and $\ad_{iA_x}^n(\tilde H_a)$ for $n \in \Ii 1 N$ extend to forms on $H^1(\O)$. It remains to check the last assumption of Definition \ref{def-conj}.

\stepp There exist $m \in \N$ and $\e > 0$ such that $J \subset [\l_m + \e ,\l_{m+1}- \e]$. Let $u \in L^2(\O)$. For almost all $x \in \R^p$ we have $u(x,\cdot) \in L^2(\o)$ so we can find a sequence $(u_n(x))_{n\in\N}$ in $\C^\N$ such that 
\[
u(x,\cdot) = \sum_{n\in\N} u_n(x) \f_n \quad \text{and in particular} \quad \sum_{n\in\N} \abs{u_n(x)}^2 = \nr{u(x,\cdot)}_{L^2(\o)}^2 .
\]
This defines a sequence $\seq u n$ of functions in $L^2(\R^p)$ with
\[
\sum_{n\in\N} \nr{u_n}_{L^2(\R^p)}^2 = \nr u _{L^2(\O)}^2.
\]
With the same proof as for Proposition 4.3 in \cite{art-diss-schrodinger-guide} we can check that for $z \in \C \setminus \R_+$ we have 
\[
(H_0 - z)\inv u = \sum_{n \in \N} (-\D_x + \l_n -z)\inv u_n \otimes \f_n.
\]
Moreover if $u \in \Dom(H_0)$ then $u_n \in H^2(\R^p)$ for all $n \in \N$ and we have 
\[
H_0 u = \sum_{n \in \N} (-\D + \l_n) u_n \otimes \f_n.
\]
Let $n \geq m+1$. We have 
\begin{align*}
\innp{H_0 (u_n \otimes \f_n)} {(u_n \otimes \f_n)}_{L^2(\O)}
& = \innp{(-\D + \l_n) u_n}{u_n}_{L^2(\R^p)} \geq \l_n \nr{u_n}_{L^2(\R^p)}^2\\
& \geq \l_{m+1} \nr{u_n \otimes \f_n}_{L^2(\O)}^2.
\end{align*}
In particular $\1 J (H_0) (u_n \otimes \f_n) = 0$. For a bounded operator $T$ we set $\Im (T) = (T-T^*)/(2i)$. Since $H_0$ and $-\D_x$ have no eigenvalues we can write
\begin{align*}
\1 J (H_0) u
& = \sum_{n = 0}^m \1 J (H_0) (u_n \otimes \f_n)\\
& = \frac 1 {\pi} \lim_{\m\to 0} \sum_{n =0}^m \int_{J} \Im \big(-\D_x + \l_n -(\t+i\m)\big)\inv  u_n \otimes \f_n \, d\t\\
& = \sum_{n =0}^m \1 J (-\D_x + \l_n) (u_n) \otimes \f_n.
\end{align*}
This gives 
\begin{eqnarray*}
\lefteqn{\innp{[\tilde H_0,iA_x] \1 J (H_0) u}{\1 J (H_0) u}_{L^2(\O)} = \innp{-2 \D_x \1 J (H_0) u}{\1 J (H_0) u} _{L^2(\O)}}\\
&\hspace{1cm}& = \sum_{n=0}^m \innp{-2\D_x \1 J(-\D_x + \l_n) (u_n) \otimes \f_n}{\1 J(-\D_x + \l_n) (u_n) \otimes \f_n}_{L^2(\O)}\\
&& \geq 2 \e \sum_{n=0}^m \nr{ \1 J(-\D_x + \l_n) (u_n) \otimes \f_n}^2_{L^2(\O)}\\
&& \geq 2 \e \nr{\1 J (H_0) u}^2_{L^2(\O)}.
\end{eqnarray*}
This proves \eqref{hyp-mourre} with $\a = 2\e$ and concludes the proof of the proposition.
\end{proof}

We could similarly analyse the same problem on the half-space 
\begin{equation} \label{def-half-space}
\O = \singl{(x_1,\dots,x_d) \in \R^d \st x_d > 0}.
\end{equation}
We also mention the Schr\"odinger operator on $\R^d$ with dissipation on the hyperplane $\Sigma = \R^{d-1} \times \singl 0$ given by the transmission condition 
\begin{equation} \label{diss-cond-hypersurface}
\partial_{x_d} u(x',0^+) - \partial_{x_d} u(x',0^-) = - i a(x') u(x',0) \quad \text{on } \Sigma.
\end{equation}
Here we have denoted by $x = (x',x_d)$ a general point in $\R^d$, with $x' \in \R^{d-1}$ and $x_d \in \R$. When $d = 1$ this corresponds to the second derivative with (dissipative) Dirac potential, usually denoted by
\[
u \mapsto -u'' - i a \d(x) u.
\]
More precisely, given $a \in W^{1,\infty}(\Sigma)$ we consider on $L^2(\R^d)$ the operator $H_a = -\D$ with domain
\[
\Dom(H_a) = \singl{u \in H^1(\R^d) \cap H^2(\R^d \setminus \Sigma) \st u \text{ satisfies }\eqref{diss-cond-hypersurface}}.
\]
Given $u \in \Dom(H_a)$ we define $H_a u$ as the function $f \in L^2(\R^d)$ which coincide with the distribution $-\D u$ on $\R^d \setminus \Sigma$. The operator $H_a$ is associated to the quadratic form 
\[
q_a : \f \mapsto \int_{\R^d} \abs{\nabla \f}^2 \, dx - i \int_\Sigma a(x') \abs{u(x',0)}^2 \, dx',
\]
defined on $\Dom(q_a) = H^1(\O)$.

In both cases, we can take the generator of dilations \eqref{def-A} as a conjugate operator on any compact interval $J \subset \R_+^*$ if for all $k \in \N$ the function $(x' \cdot \nabla')^k a$ is bounded on $\partial \O$ or $\Sigma$ (we have denoted by $\nabla '$ the gradient in the first $(d-1)$ variables).\\

In the same spirit as the last example, we can also mention the dissipative quantum graphs with some infinite edges and dissipation at the vertices, given by the condition 
\begin{equation} \label{cond-vertex}
u_1(0) = \dots = u_{n_\n}(0) \quad \text \quad \sum_{j=1}^{n_\n} u'(0) = -ia_\n u(0), 
\end{equation}
where for a vertex $\n$ the integer $n_\n$ is the number of edges attached to $\n$ and $a_\n \geq 0$. For precise definitions we refer to \cite{ong}, which deals with the limiting absorption principle for such a quantum graph with self-adjoint boundary conditions at the vertices (in particular \eqref{cond-vertex} with $a_\n=0$ for all vertices $\n$). For various non-selfadjoint conditions on quantum graphs we also refer to \cite{husseinks14}.\\

We finish this section with the example of the Schr\"odinger operator with dissipation by a potential in low dimensions and for low frequencies. In this case the dissipative Mourre theory in the sense of operators as given in \cite{art-mourre, boucletr14} can be applied, but not uniformly.\\

We consider on $\R^d$, $d \geq 3$ the Schr\"odinger operator 
\[
H_\l = - \D - \frac i {\l^2} a \left( \frac x \l \right),
\]
where $\l > 0$ and $a \in C^\infty(\R^d, \R_+)$ is of very short range: for some $\rho > 0$ there exist constants $c_\a$, $\a \in \N^d$ such that 
\[
\abs{\partial^\g a(x)} \leq c_\g \pppg x^{-2-\rho -\abs \g}.
\]
In order to obtain low frequency resolvent estimates for the Schr\"odinger operator $-\D -ia$ we have to prove uniform resolvent estimates for $H_\l$ close to the spectral parameter 1 uniformly in $\l > 0$ (see \cite{boucletr14} for the wave equation). Since $a$ is bounded the multiplication by $\frac 1 {\l^2} a \left( \frac x \l \right)$ defines a bounded operator on $L^2(\R^d)$ so for any $\l > 0$ we can apply to $H_\l$ the dissipative Mourre theory for perturbations in the sense of operators. However this absorption index becomes singular when $\l$ is close to 0 and it is not clear that this method gives estimates which are uniform in $\l$.\\
According to Proposition 7.2 in \cite{boucletr14} we have for $u \in \Sc$
\[
\nr{a\left( \frac x \l \right) u}_{H^s} \lesssim \l^2\nr{u}_{H^{s+2}}
\]
whenever $s$ and $s + 2$ belong to $\big ]-\frac d 2, \frac d 2\big[$. The same applies if we replace $a$ by $(x\cdot \nabla)^k a$ for some $k \in \N$. This proves that the commutator between the dissipative part of $H_\l$ and the generator of dilations $A$ defines an operator in $\Lc(H^2(\R^d),L^2(\R^d))$ uniformly in $\l > 0$ if $d \geq 5$. But not if $d \in \{ 3 , 4\}$. However, for any $d \geq 3$ it defines a uniformly bounded operator in $\Lc(H^1(\R^d),H\inv (\R^d))$, so it is fruitful to see it as a perturbation of the free laplacian in the sense of forms. This idea will be used (in a more general setting) in \cite{khenissir}.

\section{Uniform resolvent estimate and limiting absorption principle} \label{sec-res-estim-lap}

In this section we prove the uniform resolvent estimates and the limiting absorption principle in the abstract setting:

\begin{theorem} 
\label{th-mourre-form}
Assume that $A$ is a conjugate operator to $H$ on the interval $J$ with bounds $(\a,\b,\Upsilon)$, in the sense of Definition \ref{def-conj-refined}.
\begin{enumerate} [(i)]
\item 
Let $I \subset \mathring J$ be a compact interval and $\d > \frac 12$. Then there exists $C\geq 0$ (which only depends on $C_\Th$, $I$, $J$, $\d$, $\b$ and $\Upsilon$) such that for all $z \in \C_{I,+}$ we have
\begin{equation} \label{estim-res}
\nr{\pppg A^{-\d} (H-z)\inv \pppg A^{-\d}} _{\Lc(\Hc)}  \leq \frac C \a.
\end{equation}
\item
Moreover for all $\l \in \mathring J$ the limit 
\[
\pppg A^{-\d} \big(H-(\l+i0)\big)\inv \pppg A^{-\d} = \lim_{\m \to 0^+} \pppg A^{-\d} \big(H-(\l+i\m)\big)\inv \pppg A^{-\d} 
\]
exists in $\Lc(\Hc)$ and defines a continuous function of $\l$ on $J$ (it is H\"older-continuous of index $\frac {2\d-1}{2\d+1}$ with a constant of size $\a ^{- \frac {4\d}{2\d +1}}$).
\end{enumerate}
\end{theorem}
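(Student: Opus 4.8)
The plan is to adapt the classical Mourre argument to the dissipative form setting by working with the regularized resolvent and the differential-inequality method. Fix a compact interval $I \subset \mathring J$ and pick a function $\chi \in C_c^\infty(\mathring J)$ with $\chi \equiv 1$ on a neighbourhood of $I$. For $\e > 0$ and $z \in \C_{I,+}$, following \cite{art-mourre}, I would introduce the "distorted" conjugate operator $A_\e = A/(1+\e A^2)^{1/2}$ (or $A_\e = A(1+i\e A)^{-1}$), set $G_z(\e) = (H - z - i\e B_\e)^{-1}$ where $B_\e$ is the regularized commutator, and estimate the quantity
\[
F(\e) = \innp{G_z(\e)^* \pppg{A}^{-\d} \vf}{\pppg A^{-\d}\psi}
\]
or more precisely a suitable scalar quantity controlling $\nr{\pppg A^{-\d} G_z(\e) \pppg A^{-\d}}$. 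The key analytic input is the Mourre estimate \eqref{hyp-mourre}: using the resolvent identities \eqref{eq-res-identity} relating $\tH$ and $\tHo$ in $\Lc(\Kc^*,\Kc)$, together with $\1_J(H_0)$-localization and the fact that $q_\Th \geq 0$ (which, crucially, \emph{helps} rather than hurts, since $\Im\innp{H\f}\f \leq 0$), one derives a strict lower bound of the form $\Im\innp{(H - z - i\e B_\e)\f}{\f} \lesssim -(\text{something positive})$ on the range of $\chi(H_0)$, modulo error terms controlled by the commutator bounds in Definition \ref{def-conj-refined}.

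\textbf{Key steps, in order.} First, establish that $G_z(\e) \in \Lc(\Hc)$ for $\e$ small and that the relevant products $\pppg A^{-\d} G_z(\e) \pppg A^{-\d}$, $A_\e G_z(\e)$, etc., make sense and map $\Kc^* \to \Kc$ appropriately — this uses the Lax--Milgram setup and the fact that $B, B_0, [B,A]$ all live in $\Lc(\Kc,\Kc^*)$. Second, compute $\frac{d}{d\e} G_z(\e)$ (a Duhamel-type formula), isolate the term coming from $\e B_\e$, and commute $\pppg A^{-\d}$ through using that $[\pppg A^{-\d}, A_\e]$ is controlled. Third — the heart of the matter — prove the differential inequality
\[
\abs{\tfrac{d}{d\e} f(\e)} \leq C\big( \e^{-1/2} f(\e)^{1/2} + f(\e) + \text{lower order}\big), \qquad f(\e) = \nr{\pppg A^{-\d} G_z(\e)\pppg A^{-\d}}^2_{\Lc(\Hc)} \ \text{(or a scalar variant)},
\]
where $C$ depends only on $C_\Th, I, J, \d, \b, \Upsilon$ and the constant $\a^{-1}$ has been scaled out. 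Integrating this from $\e$ to (say) $1$ and using a crude a priori bound at $\e = 1$ (where $i B_\e$ has a large imaginary part, making $G_z(1)$ bounded) yields \eqref{estim-res}. For part (ii), I would upgrade to Hölder continuity by the standard Mourre trick: estimate $\nr{\pppg A^{-\d}\big(G_z(\e) - G_{z'}(\e)\big)\pppg A^{-\d}}$ and $\nr{\pppg A^{-\d}\big(G_z(\e) - G_z(0)\big)\pppg A^{-\d}}$, optimize over $\e$ as a function of $|z - z'|$ (and of $\Im z$), which produces the exponent $\frac{2\d-1}{2\d+1}$ and the constant of size $\a^{-4\d/(2\d+1)}$; the existence of the boundary limit $\m \searrow 0$ then follows from the uniform Cauchy estimate.

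\textbf{Main obstacle.} The delicate point is controlling the commutator-type error terms purely at the level of $\Lc(\Kc,\Kc^*)$ rather than $\Lc(\Dom(H_0),\Hc)$ as in \cite{art-mourre}: when one inserts resolvents $(\tH - z)^{-1} \colon \Kc^* \to \Kc$ between $B$'s and powers of $\pppg A$, one must verify that everything remains bounded with constants depending only on $\Upsilon$ and $C_\Th$, and in particular that the $q_\Th$-relative boundedness \eqref{qI-qO-bounded} is enough to absorb the non-closable imaginary part $\Th$. A secondary subtlety is that the Mourre estimate \eqref{hyp-mourre} involves $B_0 + \b\Th$, so the positivity one extracts is for the \emph{perturbed} commutator; reconciling this with the actual commutator $B = B_0 - i[\Th, A]$ appearing in $G_z(\e)$ requires using the dissipativity ($\Im\innp{H\f}{\f}\leq 0$, i.e. $q_\Th \geq 0$) to turn the extra $\b\Th$ term into a favourable sign — exactly the mechanism already used in \cite{art-mourre}, but it must now be checked to survive the passage to forms. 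Modulo these technical verifications, the skeleton of the proof is the standard Mourre machine.
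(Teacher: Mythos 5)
Your outline follows the same overall skeleton as the paper (regularized resolvent $\Gze$, differential inequality in $\e$, the Jensen--Mourre--Perry ODE lemma, optimization over $\e$ for the H\"older exponent), but the regularization you propose does not quite work, and the genuinely new technical steps in the form setting are glossed over.

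\textbf{The regularized resolvent.} You set $G_z(\e) = (H - z - i\e B_\e)^{-1}$ with $B_\e$ the commutator with a distorted conjugate $A_\e$. Without spectral localization, the self-adjoint part of $B_\e$ has no definite sign (the Mourre estimate is only local), so $H - z - i\e B_\e$ is not a priori maximal dissipative and $G_z(\e)$ need not exist. The paper instead takes
\[
\Gze = (\tH - i\e M - z)^{-1}, \qquad M = \Phi (B + \b\Th)\Phi, \qquad \Phi = \vf(H_0),
\]
with $\vf \in C_0^\infty(\mathring J)$ equal to $1$ near $I$. Then $\Re M = M_0 = \Phi(B_0 + \b\Th)\Phi \geq \a\Phi^2 \geq 0$ by \eqref{hyp-mourre}, so $-i\e M$ is a bounded dissipative perturbation and $\Gze$ is well defined for all $\e > 0$. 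You acknowledge that the positivity is only available on the range of $\chi(H_0)$, but the cutoff must appear in the definition of the regularized resolvent itself, not only in the final estimate. Your "crude a priori bound at $\e = 1$" is precisely where this gap bites.

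\textbf{The complementary spectral region.} The key new difficulty in the form setting, which is the actual content of Proposition \ref{prop-gze}, is the estimate for $\Phi^\bot \Gze$ (and the associated $q_\Th$-quadratic estimates). The resolvent identity produces terms like $\Phi^\bot(\tHo - z)^{-1}\Th \Gze$, and $\Th$ is \emph{not} an operator on $\Hc$, only an element of $\Lc(\Kc,\Kc^*)$. Controlling these terms requires the Cauchy--Schwarz inequality for the non-negative form $q_\Th$ combined with the quadratic estimate (Proposition \ref{prop-estim-quad-form}); this is exactly the place where the relative form-boundedness \eqref{qI-qO-bounded} enters. Your proposal names this as the "main obstacle" but does not say how it would be resolved; the paper's Proposition \ref{prop-gze} is the resolution, and it is not a routine adaptation of \cite{art-mourre}.

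\textbf{The commutator $[M,A]$.} The differential inequality uses the identity $\Gze B \Gze = iA\Gze - i\Gze A - \e\Gze[M,A]_\Kc\Gze$, which in turn needs $[M,A] \in \Lc(\Kc^*,\Kc)$. This is Lemma \ref{lem-comm-MeA}, proved via a strong $C^1$ regularity argument for $\th \mapsto e^{i\th A}\tHo e^{-i\th A}$ and a Helffer--Sj\"ostrand type computation for $[\vf(H_0),A]$; it is not free and is not mentioned in your outline.

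\textbf{A minor slip.} Since $\tH = \tHo - i\Th$, the commutator is $B = [\tH,iA] = B_0 - i[\Th,iA] = B_0 + [\Th,A]$, not $B_0 - i[\Th,A]$ as you wrote.

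Finally, note that the paper does not distort $A$ inside the commutator; it keeps the full $B$ in $M$ and instead puts the $\e$-dependence into the weights $Q(\e) = \pppg A^{-\d}\pppg{\e A}^{\d-1}$ (à la Jensen--Mourre--Perry), setting $F_z(\e) = Q(\e)\Gze Q(\e)$. This is a technical but consequential difference: it is what lets one extract the $\e^{\d - \frac 32}$ and $\e^{-\frac 12}$ powers in \eqref{estim-der-fze} and hence apply Lemma \ref{lem-JMP}.
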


\begin{remark} \label{rem-adjoint}
Taking the adjoint we obtain the same estimate with $(H-z)\inv$ replaced by ${(H^* - \bar z)\inv}$.
\end{remark}

The rest of this section is devoted to the proof of Theorem \ref{th-mourre-form}. To simplify the notation, the symbol `` $\lesssim $ '' will be used to replace `` $\leq C$ '' where $C$ is a constant which depends on $C_\Th$, $I$, $J$, $\d$, $\b$ and $\Upsilon$. The dependance in $\a \in ]0,1]$, $z \in \C_{I,+}$ and in the parameter $\e$ (which will be introduced in the proof) will always be explicit.\\

Let $\vf \in C_0^\infty(\R,[0,1])$ be supported in $\mathring J$ and equal to 1 on a neighborhood of $I$ (notice that all the estimates below will also depend on the choice of $\vf$). We set $\Phi = \vf(\Ho)$ and $\Phi^\bot = (1-\vf)(\Ho)$. We have
\[
\Phi \in \Lc(\Kc^*, \Kc)  \quad \text{and} \quad \Phi^\bot \in \Lc(\Hc) \cap \Lc(\Kc) \cap \Lc(\Kc^*).
\]
Now let
\[
M_0 =  \Phi (B_0 + \b \Th) \Phi \quad \text{and} \quad M =  \Phi (B + \b \Th) \Phi.
\]
The operators $M_0$ and $M$ are bounded on $\Hc$, and $M_0$ is the self-adjoint part of $M$. After multiplication by $\Phi$ on both sides, assumption \eqref{hyp-mourre} reads
\begin{equation} \label{minor-Me}
M_0 \geq \a \Phi^2.
\end{equation}

The proof of the following lemma is postponed to the end of the section:

\begin{lemma} \label{lem-comm-MeA}
The operator $[M,A]$, \emph{a priori} defined as an operator in $\Lc(\Ec,\Ec^*)$, extends to an operator in $\Lc(\Kc^*,\Kc)$ which we denote by $[M,A]_\Kc$. Moreover we have 
\[
\nr{[M,A]_\Kc}_{\Lc(\Kc^*,\Kc)} \lesssim \a.
\]
\end{lemma}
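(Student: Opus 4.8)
The plan is to expand $[M,A]$ using the Leibniz rule for commutators applied to the product $M = \Phi(B+\b\Th)\Phi$, and then to estimate each resulting term in $\Lc(\Kc^*,\Kc)$ using the bounds built into Definition \ref{def-conj-refined}. Formally, on the dense space $\Ec$ (or on $\Ec^*$ on the other side) we write
\[
[M,A] = [\Phi,A](B+\b\Th)\Phi + \Phi\,[B+\b\Th,A]\,\Phi + \Phi(B+\b\Th)[\Phi,A],
\]
so the whole question reduces to (a) controlling $[\Phi,A]$ and (b) controlling the middle commutator $[B+\b\Th,A]=[B,A]+\b[\Th,A]$. For (b), Definition \ref{def-conj-refined} directly gives $\nr{[B,A]}_{\Lc(\Kc,\Kc^*)} + \b\nr{[\Th,A]}_{\Lc(\Kc,\Kc^*)} \leq \a\Upsilon$, and since $\Phi$ sends $\Kc^*$ to $\Kc$ boundedly, the middle term $\Phi[B+\b\Th,A]\Phi$ is bounded in $\Lc(\Kc^*,\Kc)$ by $\lesssim \a$, which is exactly the type of bound we want. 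So the heart of the matter is the commutator $[\Phi,A]$ with $\Phi=\vf(H_0)$.

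For the term $[\Phi,A]$, I would use the Helffer--Sjöstrand formula, writing $\vf(H_0) = \frac{1}{\pi}\int_\C \bar\partial\tilde\vf(z)\,(H_0-z)\inv\,L(dz)$ for an almost-analytic extension $\tilde\vf$ of $\vf$, so that
\[
[\vf(H_0),A] = \frac1\pi \int_\C \bar\partial\tilde\vf(z)\,(H_0-z)\inv [H_0,A](H_0-z)\inv\,L(dz)
\]
(understood appropriately in the form sense). The key input is that $[H_0,A]=B_0$ extends to an element of $\Lc(\Kc,\Kc^*)$, together with the elliptic-type bound that $(H_0-z)\inv \in \Lc(\Kc^*,\Kc)$ with norm controlled by a negative power of $\abs{\Im z}$ on the support of $\tilde\vf$ (this is standard, since $\vf$ is supported in $\mathring J$ and $H_0$ is bounded below, so one stays in a region where $H_0$ is ``elliptic''). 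Because $\vf$ is compactly supported and smooth, one can choose $\tilde\vf$ with $\bar\partial\tilde\vf$ vanishing to high order in $\abs{\Im z}$, making the integral converge in $\Lc(\Kc^*,\Kc)$; this shows $[\Phi,A]$ extends to $\Lc(\Kc^*,\Kc)$. Alternatively, and perhaps cleaner here, I would avoid reproving this from scratch and instead argue by resolvent identities: since $(H_0-z)\inv = (H_0 - i)\inv + (z-i)(H_0-i)\inv(H_0-z)\inv$, commutators of bounded functions of $H_0$ with $A$ reduce to commutators of $(H_0-i)\inv$ with $A$, and $[(H_0-i)\inv, A] = -(H_0-i)\inv [H_0,A](H_0-i)\inv = -(H_0-i)\inv B_0 (H_0-i)\inv$ belongs to $\Lc(\Kc^*,\Kc)$ since $(H_0-i)\inv$ maps both $\Kc^*\to\Kc$ and $\Hc\to\Dom(H_0)\subset\Kc$ and $B_0\in\Lc(\Kc,\Kc^*)$.

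The quantitative claim $\nr{[M,A]_\Kc}_{\Lc(\Kc^*,\Kc)}\lesssim\a$ then follows by collecting the three terms: each contains exactly one ``dangerous'' factor among $\{B+\b\Th,\ [B+\b\Th,A],\ [\Phi,A]\text{-with-a-}B_0\text{-inside}\}$, and in each case Definition \ref{def-conj-refined} supplies a bound proportional to $\a$ (using $\nr{B}\le\sqrt\a\,\Upsilon$, $\nr{B+\b\Th}\nr{B_0}\le\a\Upsilon$, $\nr{[B,A]}+\b\nr{[\Th,A]}\le\a\Upsilon$, and $\nr{\Th}\lesssim 1$ from \eqref{qI-qO-bounded}, together with the $\a$-independent mapping bounds for $\Phi$, $\Phi^\bot$ and the resolvents of $H_0$); all other factors are bounded uniformly in $\a$. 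The main obstacle I anticipate is purely bookkeeping rather than conceptual: one must be careful that every commutator is first manipulated on the core $\Ec$ (resp.\ $\Ec^*$) where all objects are genuine operators, that $e^{-itA}$ leaves $\Kc$ and $\Kc^*$ invariant so that the formal identities make sense, and that one never multiplies two factors neither of which gains regularity (e.g. $B$ and $B$), which is avoided precisely because each term of the Leibniz expansion has a regularizing $\Phi$ on at least one side. Once the placement of $\Phi$'s is tracked correctly, the estimate is a direct assembly of the hypotheses.
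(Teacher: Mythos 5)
Your proposal is correct in substance and ends with exactly the paper's decomposition
$[M,A]=[\Phi,A](B+\b\Th)\Phi+\Phi[B+\b\Th,A]\Phi+\Phi(B+\b\Th)[\Phi,A]$, with the quantitative assembly done the same way (the middle term via $\nr{[B,A]}+\b\nr{[\Th,A]}\leq\a\Upsilon$, the outer terms via the product bound $\nr{B+\b\Th}\,\nr{B_0}\leq\a\Upsilon$, since $[\Phi,A]$ carries one factor $B_0$). Where you diverge is the key sub-step $[\Phi,A]\in\Lc(\Kc^*,\Kc)$ with norm $\lesssim\nr{B_0}$: you go through the Helffer--Sj\"ostrand formula, writing $[\vf(H_0),A]$ as an integral of $(\tilde H_0-\z)\inv B_0(\tilde H_0-\z)\inv$ against $\bar\partial\tilde\vf$, whereas the paper first proves that $\th\mapsto e^{i\th A}\tHo e^{-i\th A}$ is strongly $C^1$ with derivative given by $B_0$, deduces $[e^{it H_0},A]=\int_0^t e^{isH_0}B_0e^{i(t-s)H_0}\,ds$ in $\Lc(\Kc,\Kc^*)$, and then represents $\Psi=\p(H_0)$, $\p(x)=\vf(x)(x-i)^2$, through the Fourier transform of $\p$, finally sandwiching with $(H_0-i)\inv$ to land in $\Lc(\Kc^*,\Kc)$. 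Both routes work, and indeed the paper itself uses your Helffer--Sj\"ostrand technique later (Lemma \ref{lem-comm-AchiH}); your version is arguably shorter since each integrand maps $\Kc^*\to\Kc$ directly, at the price of needing the resolvent bound in $\Lc(\Kc^*,\Kc)$ and an almost-analytic extension vanishing to sufficient order, which you correctly flag. Two caveats: the identity $[(\tilde H_0-\z)\inv,A]=-(\tilde H_0-\z)\inv[ \tHo,A](\tilde H_0-\z)\inv$ that underlies your integrand is a formal Leibniz step with an unbounded $A$; its rigorous justification (via the invariance of $\Kc$ under $e^{-itA}$ and regularization on $\Ec$) is precisely what the paper's group-differentiability steps supply, and your write-up only gestures at it. Also, your ``cleaner alternative'' --- reducing commutators of bounded functions of $H_0$ with $A$ to $[(H_0-i)\inv,A]$ via the resolvent identity --- is not by itself a complete argument for a general $\vf(H_0)$: one still needs a functional-calculus representation (Helffer--Sj\"ostrand or the paper's Fourier/group representation) to reach $\vf(H_0)$ from resolvents, so keep the first route as the actual proof.
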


Let $\e \geq 0$. The operator $H - i\e M$ is maximal dissipative on $\Hc$ with domain $\Dom(H)$, so for $z \in \C_+$ it has a bounded inverse $(H-i\e M-z)\inv$ in $\Lc(\Hc,\Dom(H))$. As above for $\tH$, the operator $(\tH -i\e M -z)\in \Lc(\Kc,\Kc^*)$ has a bounded inverse 
\[
\Gze := (\tH -i\e M -z)\inv \in \Lc(\Kc^*, \Kc).
\]

The Mourre method relies on the so-called quadratic estimates (see Proposition II.5 in \cite{mourre81}). Here we will use the following version:

\begin{proposition} \label{prop-estim-quad-form}
Let $\g_0$ be a quadratic form closed, densely defined, symmetric and bounded from below. Let $P_0$ be the corresponding selfadjoint operator. Let $\Kc_\g$ denote the domain of the form $\g_0$. Let $\g_I$ be a non-negative and $\g_0$-bounded form on $\Hc$. Let $P$ be the maximal dissipative operator associated to the form $\g_0 - i \g_I$, and $\tilde P$ the corresponding operator in $\Lc(\Kc_\g,\Kc_\g^*)$. Let $\g$ a non-negative form on $\Kc_\g$ which satisfies $\g \leq \g_I$. Then for $z \in \C_+$ and $\f \in \Kc_\g^*$ we have 
\[
\g \big( (\tilde P-z)\inv  \f\big) \leq \abs{\innp{(\tilde P-z)\inv \f}{\f}_{\Kc_\g,\Kc_\g^*}}
\quad
\]
and
\[
\g \big( (\tilde P^* - \bar z)\inv  \f\big) \leq \abs{\innp{( \tilde P - \bar z)\inv \f}{\f}_{\Kc_\g,\Kc_\g^*}}.
\]

\end{proposition}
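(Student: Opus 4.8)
The plan is to prove the two inequalities by the standard ``quadratic estimate'' computation, exploiting that $\g_I \geq \g \geq 0$ controls the dissipative part of $\tilde P$. First I would fix $z \in \C_+$ and $\f \in \Kc_\g^*$, and set $u = (\tilde P - z)\inv \f \in \Kc_\g$, so that $\f = (\tilde P - z)u$ in $\Kc_\g^*$. Pairing with $u$ gives $\innp{\f}{u}_{\Kc_\g^*,\Kc_\g} = \innp{(\tilde P - z)u}{u} = \g_0(u) - i\g_I(u) - z\nr u^2$, using that $\tilde P = \tilde P_0 - i\widetilde{\g_I}$ in $\Lc(\Kc_\g,\Kc_\g^*)$ where $\g_0(u)$ and $\g_I(u)$ are the (real-valued) quadratic forms evaluated at $u$ and $\tilde P_0$ is the operator in $\Lc(\Kc_\g,\Kc_\g^*)$ attached to $\g_0$.

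Next I would take imaginary parts. Since $\g_0(u) \in \R$ and $\g_I(u) \in \R$, we get $\Im \innp{\f}{u} = -\g_I(u) - \Im(z)\nr u^2$, hence
\[
\g_I(u) + \Im(z)\nr u ^2 = -\Im\innp{\f}{u} \leq \abs{\innp{\f}{u}}.
\]
Because $\Im(z) > 0$ and $\nr u^2 \geq 0$, and because $0 \leq \g \leq \g_I$ as forms on $\Kc_\g$, this yields
\[
\g(u) \leq \g_I(u) \leq \g_I(u) + \Im(z)\nr u^2 \leq \abs{\innp{\f}{u}} = \abs{\innp{(\tilde P - z)\inv \f}{\f}_{\Kc_\g,\Kc_\g^*}},
\]
which is the first asserted bound (the pairing being Hermitian up to conjugation, so the modulus is unchanged when swapping the order of the arguments).

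For the second inequality, the point is that $\tilde P^*$ is the operator associated with the form $\g_0 + i\g_I$ (the adjoint form), which is again of the type covered by the hypotheses since $\g_I$ is non-negative; equivalently, $-\tilde P^*$ is maximal dissipative and $(\tilde P^* - \bar z)\inv = \big((\tilde P - z)\inv\big)^*$ maps $\Kc_\g^* \to \Kc_\g$ for $z \in \C_+$. Repeating the computation with $v = (\tilde P^* - \bar z)\inv \f$ gives $\innp{\f}{v} = \g_0(v) + i\g_I(v) - \bar z \nr v^2$, and taking imaginary parts now produces $\g_I(v) - \Im(\bar z)\nr v^2 = \Im\innp{\f}{v}$, i.e. $\g_I(v) + \Im(z)\nr v^2 = \Im\innp{\f}{v} \leq \abs{\innp{\f}{v}}$; the sign works out because $\Im(\bar z) = -\Im(z) < 0$. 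Then $\g(v) \leq \g_I(v) \leq \abs{\innp{\f}{v}}$, and using $(\tilde P^* - \bar z)\inv = \big((\tilde P - z)\inv\big)^*$ we rewrite $\innp{\f}{v}_{\Kc_\g,\Kc_\g^*} = \innp{(\tilde P - z)\inv \f}{\f}_{\Kc_\g,\Kc_\g^*}$ up to a complex conjugate, giving the stated estimate. The only mildly delicate points — not really obstacles — are bookkeeping the $\Kc_\g/\Kc_\g^*$ duality brackets and their conjugation conventions, and justifying that $\tilde P^*$ is exactly the operator of the form $\g_0 + i\g_I$ so that the second computation is legitimate; both follow from the Lax--Milgram setup and Lemma \ref{lem-sect-form} already invoked for $\tilde H$.
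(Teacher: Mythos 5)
Your proof is correct and is essentially the paper's own argument: the paper bounds $\g(u)$ by $\frac 1{2i}\innp{\big((\tilde P^*-\bar z)-(\tilde P-z)\big)u}{u}$ for $u=(\tilde P-z)\inv\f$, which is exactly your step of taking the imaginary part of $\innp{(\tilde P-z)u}{u}=\g_0(u)-i\g_I(u)-z\nr u^2$ and using $0\leq\g\leq\g_I$ together with $\Im z>0$, and it treats the second estimate ``similarly,'' just as you do via the adjoint form $\g_0+i\g_I$ (noting, as you do, that up to conjugation the right-hand sides agree in modulus).
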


If $\f \in \Hc$ we can replace $\tilde P$ by $P$ in these estimates.

\begin{proof}
For $z \in \C_+$ and $\f \in \Kc_\g^*$ we have
\begin{align*}
\g \big( (\tilde P-z)\inv  \f\big)
& \leq \frac 1 {2i} \innp{ \big( ( \tilde P^* - \bar z) - (\tilde P - z)\big) ( \tilde P-z)\inv  \f}{(\tilde P-z)\inv  \f}_{\Kc_\g^*, \Kc_\g} \\
& \leq \frac 1 {2i} \innp{(\tilde P-z)\inv \f}{ \f}_{\Kc_\g,\Kc_\g^*} - \frac 1 {2i} \innp{ \f}{ (\tilde P-z)\inv \f}_{\Kc_\g^*,\Kc_\g}\\
& \leq \Im \innp{(\tilde P-z)\inv \f}{\f}_{\Kc_\g,\Kc_\g^*}.
\end{align*}
The second estimate is proved similarly.
\end{proof}

\begin{proposition} \label{prop-gze}
Let $\Kc_0$ stand either for $\Kc$ or $\Hc$. Then there exists $\e_0  \in ]0,1]$ (which depends on $C_\Th$, $I$, $J$, $\b$ and $\Upsilon$) such that for $Q\in \Lc(\Kc_0^*)$, $z \in \C_{I,+}$ and $\e \in ]0,\e_0]$ we have
\begin{equation} \label{estim-phibot-gze}
\nr{\Phi^\bot \Gze Q}_{\Lc(\Kc_0^*,\Kc)} \lesssim \nr {Q}_{\Lc(\Kc_0^*)} + \nr{Q^* \Gze Q}_{\Lc(\Kc_0^*,\Kc_0)}^{\frac 12} ,
\end{equation}
\begin{equation} \label{estim-phi-gze}
\nr{\Phi \Gze Q}_{\Lc(\Kc_0^*,\Kc)}  \lesssim \frac {1}   {\sqrt \a \sqrt \e}\nr{Q^* \Gze Q}_{\Lc(\Kc_0^*,\Kc_0)}^{\frac 12} , 
\end{equation}
\begin{equation}\label{estim-gze}
\nr{\Gze Q}_{\Lc(\Kc_0^*,\Kc)} \lesssim \nr {Q} _{\Lc(\Kc_0^*)} + \frac {\nr{Q^* \Gze Q}^{\frac 12}_{\Lc(\Kc_0^*,\Kc_0)}}  {\sqrt \a \sqrt \e},
\end{equation}
and for $\f \in \Kc_0^*$ with $\nr \f_{\Kc_0^*} \leq 1$:
\begin{equation} \label{estim-qTh}
q_\Th \big(\Phi \Gze Q \f\big) + q_\Th \big(\Phi^\bot \Gze Q \f\big) \lesssim\nr Q_{\Lc(\Kc_0^*)}^2 + \nr{Q^* \Gze Q }_{\Lc(\Kc_0^*,\Kc_0)} .
\end{equation}
These estimates also hold if $\Gze$ is replaced by $\Gze^*$ on the left-hand sides.
\end{proposition}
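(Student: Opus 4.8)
The plan is to establish the four estimates in the order \eqref{estim-phi-gze}, \eqref{estim-phibot-gze}, \eqref{estim-gze}, \eqref{estim-qTh} (one may in fact take $\e_0\le 1$), with Proposition \ref{prop-estim-quad-form} as the only analytic input. First I would check that this proposition applies to $P=H-i\e M$: writing $-i\e M=-i\e M_0+\e\big(-i(M-M_0)\big)$, the operator $-i(M-M_0)$ is bounded and self-adjoint on $\Hc$ (since $M_0$ is the self-adjoint part of $M$), so $\tH-i\e M$ is the operator attached to the sectorial closed form $\g_0-i\g_I$ where $\g_0$ is a bounded symmetric perturbation of $q_0$ (form-domain $\Kc$) and $\g_I=q_\Th+\innp{\e M_0\cdot}{\cdot}$ is non-negative and $q_0$-bounded. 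Hence, for every non-negative form $\g\le\g_I$ on $\Kc$ and every $\f\in\Kc^*$ (or $\f\in\Hc$, with $\Gze$ in place of the resolvent), one has $\g(\Gze\f)\le\abs{\innp{\Gze\f}{\f}_{\Kc,\Kc^*}}$, and the same with $\Gze$ replaced by $\Gze^*$.

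For \eqref{estim-phi-gze} I would use $\g=\innp{\e\a\Phi^2\cdot}{\cdot}$, which satisfies $\g\le\innp{\e M_0\cdot}{\cdot}\le\g_I$ by \eqref{minor-Me} and $q_\Th\ge 0$; this gives $\e\a\nr{\Phi\Gze\f}^2\le\abs{\innp{\Gze\f}{\f}}$, and applying it to $\f=Q\psi$ with $\nr\psi_{\Kc_0^*}\le 1$ (moving $Q$ across the pairing) yields $\e\a\nr{\Phi\Gze Q\psi}_\Kc^2\le\nr{Q^*\Gze Q}_{\Lc(\Kc_0^*,\Kc_0)}$, hence \eqref{estim-phi-gze}. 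Estimate \eqref{estim-gze} then follows immediately from $\Gze Q=\Phi\Gze Q+\Phi^\bot\Gze Q$ once \eqref{estim-phibot-gze} is known; and \eqref{estim-qTh} follows from \eqref{estim-phibot-gze} together with the quadratic estimate for $\g=q_\Th$ (giving $q_\Th(\Gze Q\f)\le\nr{Q^*\Gze Q}$), the $q_0$-boundedness $q_\Th(u)\lesssim\nr u_\Kc^2$ applied to $u=\Phi^\bot\Gze Q\f$, and $q_\Th(\Phi\Gze Q\f)\le 2q_\Th(\Gze Q\f)+2q_\Th(\Phi^\bot\Gze Q\f)$.

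The core of the argument is \eqref{estim-phibot-gze}. Since $\vf\equiv1$ on a neighbourhood of $I$ and $\Re z\in I$, the multiplier $\l\mapsto\pppg\l^{s}(1-\vf(\l))/(\l-z)$ is bounded uniformly in $z\in\C_{I,+}$ for all $s\le1$, so $R_z^\bot:=\Phi^\bot(\tHo-z)\inv=\big((1-\vf)/(\cdot-z)\big)(\Ho)$ is bounded uniformly in $z\in\C_{I,+}$ from $\Kc^*$ to $\Kc$, from $\Kc$ to $\Kc$, and from $\Hc$ to $\Kc$. Multiplying the resolvent identity $\Gze=(\tHo-z)\inv+(\tHo-z)\inv(i\Th+i\e M)\Gze$ by $\Phi^\bot$ gives $\Phi^\bot\Gze Q=R_z^\bot Q+R_z^\bot\,i\Th\,\Gze Q+R_z^\bot\,i\e M\,\Gze Q$. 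The first term is $\lesssim\nr Q$. For the second, Cauchy--Schwarz for the non-negative form $q_\Th$ gives $\nr{\Th u}_{\Kc^*}\lesssim q_\Th(u)^{1/2}$, which together with the quadratic estimate for $\g=q_\Th$ yields $\nr{\Th\Gze Q}_{\Lc(\Kc_0^*,\Kc^*)}\lesssim\nr{Q^*\Gze Q}^{1/2}$, hence $\nr{R_z^\bot i\Th\Gze Q}\lesssim\nr{Q^*\Gze Q}^{1/2}$.

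The term $R_z^\bot\,i\e M\,\Gze Q$ is the one I expect to be the main obstacle: a naive bound through \eqref{estim-phi-gze} produces an inadmissible factor $\sqrt\e/\sqrt\a$. The remedy is the splitting $M=M_0+(M-M_0)$. For the self-adjoint part, writing $\e M_0\Gze Q=\sqrt\e\,M_0^{1/2}\cdot(\sqrt\e\,M_0^{1/2}\Gze Q)$ and applying the quadratic estimate with $\g=\innp{\e M_0\cdot}{\cdot}$ (which controls $\nr{\sqrt\e\,M_0^{1/2}\Gze Q}_{\Lc(\Kc_0^*,\Hc)}\le\nr{Q^*\Gze Q}^{1/2}$), together with $\nr{M_0^{1/2}}_{\Lc(\Hc)}\lesssim1$, gives $\nr{R_z^\bot i\e M_0\Gze Q}\lesssim\sqrt\e\,\nr{Q^*\Gze Q}^{1/2}$ with no $\a$. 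For the anti-self-adjoint part $M-M_0=\Phi(B-B_0)\Phi$, the key is that $B-B_0$ is exactly the anti-self-adjoint part of $B$, so $\nr{B-B_0}_{\Lc(\Kc,\Kc^*)}\le\nr B_{\Lc(\Kc,\Kc^*)}\le\sqrt\a\,\Upsilon$; combining this $\sqrt\a$ with \eqref{estim-phi-gze} applied to the surviving factor $\Phi\Gze Q$ cancels the $1/\sqrt\a$ and leaves $\nr{R_z^\bot i\e(M-M_0)\Gze Q}\lesssim\sqrt\e\,\nr{Q^*\Gze Q}^{1/2}$. Summing and using $\e\le1$ gives \eqref{estim-phibot-gze}. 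Finally, the versions with $\Gze$ replaced by $\Gze^*$ follow from the symmetric argument — replacing $z$ by $\bar z$, $\tH$ by $\tH^*=\tHo+i\Th$ and $M$ by $M^*$ (whose self-adjoint part is again $M_0$), and invoking the second inequality of Proposition \ref{prop-estim-quad-form}. Apart from this, the remaining care is purely bookkeeping: tracking which of $\Kc$, $\Hc$, $\Kc^*$ each factor maps between, together with the uniform-in-$z$ bounds on $R_z^\bot$, so that every term indeed lands in $\Lc(\Kc_0^*,\Kc)$.
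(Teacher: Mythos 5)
Your proof is correct and its overall architecture (quadratic estimates for $\Gze$, resolvent identity against $(\tHo-z)^{-1}$, uniform boundedness of $\Phi^\bot(\tHo-z)^{-1}$) matches the paper's, but you handle the delicate term $i\e\Phi^\bot(\tHo-z)^{-1}M\Gze Q$ by a genuinely different decomposition. The paper splits $M = \Phi B\Phi + \b\,\Phi\Th\Phi$ and estimates the $\b\Phi\Th\Phi$ piece by Cauchy--Schwarz for $q_\Th$; because $q_\Th(\Phi\Gze Q\f)$ is only controlled up to a $\nr{\Phi^\bot\Gze Q}$ term (inequality \eqref{estim-qThPhi}), the paper ends with $\nr{\Phi^\bot\Gze Q}\lesssim\nr Q + \nr{Q^*\Gze Q}^{1/2} + \e\nr{\Phi^\bot\Gze Q}$ and must take $\e_0$ small to absorb the last term. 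You instead split $M = M_0 + (M-M_0)$: for $M_0$ you factor through $M_0^{1/2}$ (using $M_0\ge\a\Phi^2\ge 0$, $\nr{M_0}\lesssim 1$ since $B_0+\b\Th$ is the self-adjoint part of $B+\b\Th$ whose norm is $\lesssim\Upsilon+\b C_\Th$) and apply the quadratic estimate with $\g=\e\innp{M_0\,\cdot}{\cdot}$, giving a factor $\sqrt\e$ without any bootstrap; for $M-M_0=\Phi(B-B_0)\Phi$ you observe it is the anti-self-adjoint part of $M$, so $\nr{B-B_0}\le\nr B\le\sqrt\a\,\Upsilon$, which exactly cancels the $1/\sqrt\a$ from \eqref{estim-phi-gze}. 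The upshot is that no absorption is needed and $\e_0$ can be taken $=1$, which is a clean simplification; you also need to justify, as the paper does, passing from $\nr{\Phi\Gze Q\f}_\Hc$ to $\nr{\Phi\Gze Q\f}_\Kc$ using that $\vf$ is compactly supported, but you flag that as bookkeeping. Both routes prove the proposition; yours is slightly more elegant here, though the paper's version (proving \eqref{estim-qThPhi} first) sets up the ingredient it reuses later in the $\e\b\Phi\Th\Phi$ estimate, which you replace by the $M_0^{1/2}$ trick.
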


Applied with $Q = \Id_{\Kc^*}$, \eqref{estim-gze} gives an estimate on $\Gze$ alone:

\begin{corollary} \label{cor-nr-gze}
For $z \in \C_{I,+}$ and $\e \in ]0,\e_0]$ we have
\[
\nr{\Gze}_{\Lc(\Kc^*,\Kc)} + \nr{\Gze^*}_{\Lc(\Kc^*,\Kc)} \lesssim \frac 1 {\a \e}.
\]
\end{corollary}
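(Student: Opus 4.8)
The claim is that $\|\Gze\|_{\Lc(\Kc^*,\Kc)} + \|\Gze^*\|_{\Lc(\Kc^*,\Kc)} \lesssim (\a\e)^{-1}$, and the stated route is to apply \eqref{estim-gze} with $Q = \Id_{\Kc^*}$. So the plan is simply to track what \eqref{estim-gze} produces in that case. With $Q = \Id_{\Kc^*}$ we have $\nr Q_{\Lc(\Kc^*)} = 1$, and the inequality reads
\[
\nr{\Gze}_{\Lc(\Kc^*,\Kc)} \lesssim 1 + \frac{\nr{\Gze}_{\Lc(\Kc^*,\Kc)}^{\frac 12}}{\sqrt\a\sqrt\e}.
\]
Here I have used that $Q^* \Gze Q = \Gze$ when $Q$ is the identity, and that the norm $\Lc(\Kc^*,\Kc_0)$ with $\Kc_0 = \Kc$ is exactly $\Lc(\Kc^*,\Kc)$.

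The remaining point is a scalar bootstrap: if a nonnegative real number $x$ satisfies $x \lesssim 1 + x^{1/2}/(\sqrt\a\sqrt\e)$, then $x \lesssim (\a\e)^{-1}$. To see this, write the hypothesis as $x \le C_1 + C_2 x^{1/2}/(\sqrt\a\sqrt\e)$ for suitable constants $C_1, C_2$ (absorbed into $\lesssim$). Either $x \le 2C_1$, in which case $x \lesssim 1 \le (\a\e)^{-1}$ since $\a,\e \le 1$; or $C_2 x^{1/2}/(\sqrt\a\sqrt\e) \ge C_1$, in which case $x \le 2 C_2 x^{1/2}/(\sqrt\a\sqrt\e)$, hence $x^{1/2} \le 2C_2/(\sqrt\a\sqrt\e)$ and $x \le 4 C_2^2/(\a\e)$. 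In both cases $x \lesssim (\a\e)^{-1}$, which is the desired bound for $\nr{\Gze}_{\Lc(\Kc^*,\Kc)}$.

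For the adjoint, the final sentence of Proposition \ref{prop-gze} states that all the estimates, in particular \eqref{estim-gze}, also hold with $\Gze$ replaced by $\Gze^*$ on the left-hand sides; applying the same scalar bootstrap with $Q = \Id_{\Kc^*}$ gives $\nr{\Gze^*}_{\Lc(\Kc^*,\Kc)} \lesssim (\a\e)^{-1}$. Adding the two bounds yields the corollary, valid for $z \in \C_{I,+}$ and $\e \in {]0,\e_0]}$, where $\e_0$ is the threshold furnished by Proposition \ref{prop-gze}.

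There is essentially no obstacle here: the content is entirely in Proposition \ref{prop-gze}, and this corollary is just the self-improving consequence of specializing to $Q$ the identity. The only mild subtlety — barely worth flagging — is making sure one is allowed to take $\Kc_0 = \Kc$ (so that $Q = \Id_{\Kc^*} \in \Lc(\Kc^*)$ is a legitimate choice of $Q \in \Lc(\Kc_0^*)$ and the target space $\Lc(\Kc_0^*,\Kc_0)$ coincides with $\Lc(\Kc^*,\Kc)$), and that the constants hidden in $\lesssim$ indeed depend only on the allowed data $C_\Th, I, J, \d, \b, \Upsilon$ and not on $\a,\e,z$, which is exactly the bookkeeping convention fixed at the start of the section.
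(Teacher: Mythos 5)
Your proof is correct and matches the paper's intended argument: the corollary is stated as an immediate consequence of \eqref{estim-gze} with $Q = \Id_{\Kc^*}$, and your scalar bootstrap (legitimate since $\nr{\Gze}_{\Lc(\Kc^*,\Kc)}$ is finite a priori by Lax--Milgram, and $\a,\e \leq 1$) is exactly the implicit resolution the paper leaves to the reader; the adjoint bound follows as you say (or even more directly, since $\nr{\Gze^*} = \nr{\Gze}$).
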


\begin{proof}[Proof of Proposition \ref{prop-gze}]
\stepp Let $z \in \C_{I,+}$. Since $\Phi + \Phi^\bot = 1$, \eqref{estim-gze} is a direct consequence of \eqref{estim-phibot-gze} and \eqref{estim-phi-gze}. Let $\f \in \Kc_0^*$. According to \eqref{minor-Me} and Proposition \ref{prop-estim-quad-form} applied with $Q\f \in \Kc^*$ and the form $\tilde q$ corresponding to $\a \e \Phi^2$ we have
\begin{align*}
\nr{\Phi \Gze Q \f}^2_{\Hc}
& =\frac 1 {\a  \e} \innp {\a \e \Phi^2 \Gze Q \f}{\Gze Q \f} _{\Hc}\\
& \leq  \frac 1 {\a  \e} \abs{\innp {\Gze Q \f}{ Q \f} _{\Kc,\Kc^*}}\\
& \leq \frac 1  {\a  \e} \nr{Q^* \Gze Q}_{\Lc(\Kc_0^*,\Kc_0)} \nr {\f} ^2_{\Kc_0^*}.
\end{align*}
Since $\vf$ is compactly supported in $J$, there exists a constant $c$ which only depends on $J$ such that 
\[
\nr{\Phi \Gze Q \f}^2_{\Kc} \leq  \frac {c}  {\a  \e} \nr{Q^* \Gze Q}_{\Lc(\Kc_0^*,\Kc_0)} \nr {\f} ^2_{\Kc_0^*}.
\]
The same holds with $\Gze$ replaced by $\Gze^*$, and \eqref{estim-phi-gze} is proved.

\stepp 
Since the quadratic form $q_\Th$ is non-negative we can apply the Cauchy-Schwarz inequality: for $\p_1,\p_2 \in \Kc$ we have 
\[
q_\Th(\p_1 + \p_2) \leq q_\Th(\p_1)+ 2 \sqrt{q_\Th(\p_1)}\sqrt{q_\Th(\p_2)} + q_\Th(\p_2) \leq 2 q_\Th(\p_1)  + 2 q_\Th(\p_2).
\]
In particular
\[
q_\Th(\Phi \Gze Q \f) \leq 2  q_\Th(\Gze Q \f) + 2 q_\Th(\Phi ^\bot \Gze Q \f).
\]
According to Proposition \ref{prop-estim-quad-form} we have
\begin{equation} \label{estim-qTh-Gze}
q_\Th  \big(  \Gze Q \f\big)  \leq \abs{\innp {Q^* \Gze Q \f}\f_{\Kc_0,\Kc_0^*}} \leq \nr{Q^* \Gze Q} _{\Lc(\Kc_0^*,\Kc_0)}\nr \f_{\Kc_0^*}^2.
\end{equation}
On the other hand, according to \eqref{qI-qO-bounded}
\begin{equation} \label{estim-qThbot}
\begin{aligned}
q_\Th(\Phi ^\bot \Gze Q \f)
%& 
\leq C_\Th \nr{\Phi^\bot \Gze Q \f}^2_\Kc.
\end{aligned}
\end{equation}
We obtain
\begin{equation} \label{estim-qThPhi}
\begin{aligned}
q_\Th(\Phi \Gze Q \f)
%& 
\leq 2 \nr{Q^* \Gze Q}_{\Lc(\Kc_0^*,\Kc_0)} \nr\f^2_{\Kc_0} + 2 C_\Th \nr{\Phi^\bot \Gze Q \f}^2_\Kc.
\end{aligned}
\end{equation}
Thus we have to prove \eqref{estim-phibot-gze} to prove \eqref{estim-qTh}. The proof of \eqref{estim-qTh} relies itself on \eqref{estim-qThPhi}.

\stepp
According to the resolvent identity (as in \eqref{eq-res-identity}) we have in $\Lc(\Kc_0^*,\Kc)$
\begin{align*}
\Phi^\bot \Gze Q = \Phi^\bot (\tHo - z)\inv   Q+ i \Phi^\bot (\tHo - z)\inv \big(\Th + \e \Phi B \Phi + \e \b \Phi \Th \Phi \big) \Gze Q.
\end{align*}
By functional calculus the operator $\Phi^\bot (\tHo-z)\inv$ belongs to $\Lc(\Kc^*,\Kc)$ uniformly in $z \in \C_{I,+}$.
Let $\f \in \Kc_0^*$ and $\p \in \Kc^*$. According to the Cauchy-Schwarz inequality we have
\begin{align*}
\innp{\Phi^\bot (\tHo - z)\inv \Th  \Gze Q \f}{\p}_{\Kc,\Kc^*}
& = q_\Th  \big(  \Gze Q \f, \Phi^\bot (\tHo - \bar z)\inv \p\big) \\
& \leq q_\Th  \big(  \Gze Q \f\big) ^{\frac 12} q_\Th  \big(\Phi^\bot (\tHo - \bar z)\inv \p\big)^{\frac 12}.
\end{align*}
According to \eqref{qI-qO-bounded} we have
\[
q_\Th \left(\Phi^\bot (\tHo - \bar z)\inv  \p\right) \lesssim \nr \p_{\Kc^*}^2.
\]
With \eqref{estim-qTh-Gze} this proves that 
\[
\nr{\Phi^\bot (\tHo - z)\inv \Th  \Gze Q}_{\Lc(\Kc_0^*,\Kc)} \lesssim \nr{Q^* \Gze Q}^{\frac 12}_{\Lc(\Kc_0^*,\Kc_0)}.
\]
Then we have
\begin{align*}
\e \nr{\Phi^\bot (\tHo - z)\inv  \Phi B \Phi \Gze Q}_{\Lc(\Kc_0^*,\Kc)}
& \lesssim \sqrt \a   \e \nr{\Phi \Gze Q}_{\Lc(\Kc_0^*,\Kc)}\\
& \lesssim \sqrt \e\nr{Q^* \Gze Q}^{\frac 12}_{\Lc(\Kc_0^*,\Kc_0)}.
\end{align*}
On the other hand, according to the Cauchy-Schwarz inequality and \eqref{estim-qThPhi} we have
\begin{eqnarray*}
\lefteqn{ \e \b \nr{\Phi^\bot (\tHo - z)\inv  \Phi \Th \Phi \Gze Q \f}_{\Kc}}\\
&& \leq \e \b q_\Th\big(\Phi^\bot (\tHo - \bar z)\inv  \Phi \f\big) ^{\frac 12} q_\Th\big(\Phi \Gze Q \f\big)^{\frac 12} \\
&& \lesssim  \e \left( \nr{Q^* \Gze Q}^{\frac 12}_{\Lc(\Kc_0^*,\Kc_0)} + \nr{\Phi^\bot \Gze Q }_{\Lc(\Kc_0^*,\Kc)} \right) \nr{\f}_{\Kc_0^*}^2.
\end{eqnarray*}
Finally we obtain
\begin{eqnarray*}
\lefteqn{\nr{\Phi^\bot \Gze Q}_{\Lc(\Kc_0^*,\Kc)}}\\
&&\lesssim \left( \nr Q _{\Lc(\Kc_0^*)} + \nr{Q^* \Gze Q}^{\frac 12}_{\Lc(\Kc_0^*,\Kc_0)} + \e  \nr{\Phi^\bot \Gze Q}_{\Lc(\Kc_0^*,\Kc)}\right).
\end{eqnarray*}
This gives \eqref{estim-phibot-gze} when $\e > 0$ is small enough. Then \eqref{estim-qThPhi} and \eqref{estim-qThbot} give \eqref{estim-qTh}.
\end{proof}

\begin{lemma} \label{lem-GMAG}
On $\Lc(\Dom(A),\Dom(A)^*)$ we have 
\begin{align*} 
 \Gze B \Gze  = i  A \Gze - i \Gze A  - \e  \Gze [M,A]_\Kc \Gze .
\end{align*}
\end{lemma}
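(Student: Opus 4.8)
\emph{Strategy.} The identity is the form--space avatar of the elementary resolvent--commutator formula, so at the formal level nothing happens. Write $\tP = \tH - i\e M \in \Lc(\Kc,\Kc^*)$, so that $\Gze = (\tP-z)\inv$, and note that $\Gze(\tP-z) = \Id_\Kc$ and $(\tP-z)\Gze = \Id_{\Kc^*}$, whence $\Gze\tP = \Id + z\Gze$ and $\tP\Gze = \Id + z\Gze$; plugging these into $\Gze\big(\tP(iA) - (iA)\tP\big)\Gze$ gives $iA\Gze - i\Gze A$. On the other hand $[\tP,iA] = [\tH,iA] - i\e[M,iA] = B + \e[M,A]_\Kc$, using $[M,iA] = i[M,A]$ and Lemma \ref{lem-comm-MeA}. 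Combining the two yields exactly the claim. The whole difficulty is therefore to make sense of the middle manipulations: $\Gze$ maps $\Kc^*$ into $\Kc$ but \emph{not} into $\Ec = \Dom(A)\cap\Kc$ with $A\,\cdot \in \Kc$, so $A$ cannot be applied directly to $\Gze\f$ and an expression such as $\tP(iA)\Gze\f$ is not literally defined.

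\emph{Regularisation.} The plan is to regularise $A$ by the bounded operators $A_\d = A(1+i\d A)\inv = \frac 1{i\d}\big(\Id - (1+i\d A)\inv\big)$ for $\d>0$ small. Since the unitary group $e^{-itA}$ leaves $\Kc$ invariant, so do the resolvents of its generator, hence so does $A_\d$; the same holds on $\Kc^*$ by duality, so $A_\d \in \Lc(\Hc)\cap\Lc(\Kc)\cap\Lc(\Kc^*)$ (these are standard facts of the $C^1(A)$ calculus, see \cite{amreinbg}). Because $A_\d$ is bounded and preserves $\Kc$ and $\Kc^*$, the formal computation above is now legitimate with $A_\d$ in place of $A$: from $\Gze(\tP-z) = \Id_\Kc$ and $(\tP-z)\Gze = \Id_{\Kc^*}$ we get, in $\Lc(\Kc^*,\Kc)$,
\[
A_\d\Gze - \Gze A_\d = \Gze(\tP-z)A_\d\Gze - \Gze A_\d(\tP-z)\Gze = \Gze[\tP,A_\d]\Gze ,
\]
that is, after multiplying by $i$ and expanding $[\tP,iA_\d] = [\tH,iA_\d] + \e[M,A_\d]$,
\[
iA_\d\Gze - i\Gze A_\d = \Gze\big([\tH,iA_\d] + \e[M,A_\d]\big)\Gze \qquad \text{in } \Lc(\Kc^*,\Kc).
\]

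\emph{Passage to the limit.} It remains to let $\d\to0$. For the right-hand side I would use the elementary identities $[\tH,iA_\d] = (1+i\d A)\inv B\,(1+i\d A)\inv$ and $[M,A_\d] = (1+i\d A)\inv [M,A]_\Kc\,(1+i\d A)\inv$, which make sense because $B \in \Lc(\Kc,\Kc^*)$ (Definition \ref{def-conj}(ii)) and $[M,A]_\Kc \in \Lc(\Kc^*,\Kc)$ (Lemma \ref{lem-comm-MeA}); since $(1+i\d A)\inv$ tends to $\Id$ strongly on $\Kc$ and on $\Kc^*$ and stays uniformly bounded there, it follows that $[\tH,iA_\d]\to B$ and $[M,A_\d]\to[M,A]_\Kc$ strongly, so the right-hand side converges to $\Gze B\Gze + \e\Gze[M,A]_\Kc\Gze$ strongly in $\Lc(\Kc^*,\Kc)$, hence in $\Lc(\Dom(A),\Dom(A)^*)$. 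For the left-hand side, fix $\f\in\Dom(A)$: on the one hand $A_\d\f\to A\f$ in $\Hc$ and $\Gze\in\Lc(\Hc,\Kc)$, so $\Gze A_\d\f\to\Gze A\f$ in $\Kc$; on the other hand, since $A_\d\Gze\f\in\Hc$, for every $\g\in\Dom(A)$
\[
\innp{A_\d\Gze\f}{\g}_{\Dom(A)^*,\Dom(A)} = \innp{\Gze\f}{A_\d^*\g}_{\Hc} \xrightarrow[\d\to0]{} \innp{\Gze\f}{A\g}_{\Hc} = \innp{A\Gze\f}{\g}_{\Dom(A)^*,\Dom(A)} ,
\]
using $A_\d^*\g = A(1-i\d A)\inv\g\to A\g$ in $\Hc$. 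Hence $iA_\d\Gze - i\Gze A_\d \to iA\Gze - i\Gze A$ in $\Lc(\Dom(A),\Dom(A)^*)$, and comparing the two limits proves the identity. The same argument, with $\Gze$ replaced throughout by $\Gze^* = (\tH^* + i\e M - \bar z)\inv$, gives the analogue for $\Gze^*$. The only genuinely delicate point is the first step — verifying that $A_\d$ is an admissible regulariser on the whole scale $\Kc\subset\Hc\subset\Kc^*$ and bookkeeping which operator acts between which spaces; once that is set up, the limiting argument is a routine strong-convergence exercise. (Equivalently, one may read the identity as the infinitesimal version, at $t=0$, of $e^{itA}\Gze e^{-itA} = (e^{itA}\tP e^{-itA} - z)\inv$, differentiated in $\Lc(\Dom(A),\Dom(A)^*)$.)
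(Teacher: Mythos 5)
Your proof is correct, but it proceeds differently from the paper. The paper's proof keeps the unbounded operator $A$ and instead regularises the vectors: it picks sequences $\f_n,\p_m\in\Ec$ converging in $\Kc$ to $\Gze\f$ and $\Gze^*\p$, expands $\innp{[\tH-i\e M-z,iA]\f_n}{\p_m}$, and passes to the (iterated) limit using only the density of $\Ec$ in $\Kc$, Lemma \ref{lem-comm-MeA}, and the fact that $A$ can be moved onto $\p\in\Dom(A)$ in the last step. You instead regularise the operator, replacing $A$ by $A_\d=A(1+i\d A)\inv$, proving the identity exactly for $A_\d$, and letting $\d\to 0$; this trades the paper's elementary density argument for the ABG-type facts that $(1\pm i\d A)\inv$ preserves $\Kc$ and $\Kc^*$ with uniform bounds and strong convergence to the identity, and for the identities $[\tH,iA_\d]=(1+i\d A)\inv B(1+i\d A)\inv$ and $[M,A_\d]=(1+i\d A)\inv[M,A]_\Kc(1+i\d A)\inv$. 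Those identities are true but carry the real weight of the argument, so you should justify them rather than call them elementary: the point is that $(1\pm i\d A)\inv$ maps $\Kc$ into $\Ec$ (since $A(1\pm i\d A)\inv\f=\mp\tfrac1{i\d}\big(\Id-(1\pm i\d A)\inv\big)\f\in\Kc$ for $\f\in\Kc$), so the commutator forms may be evaluated on $(1\pm i\d A)\inv\Kc$ and replaced there by their bounded extensions $B$ and $[M,A]_\Kc$; a two-line computation with $u=(1+i\d A)\inv\f$, $v=(1-i\d A)\inv\p$ then gives the formulas. Your limiting arguments (strong convergence of the right-hand side in $\Lc(\Kc^*,\Kc)$, weak convergence of the left-hand side against $\Dom(A)$ vectors) are sound, and the signs match the statement. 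What your route buys is a statement-level mechanism (the $A_\d$ calculus) that is reusable and avoids iterated limits; what the paper's route buys is brevity and minimal input, since it uses nothing beyond the density of $\Ec$ in $\Kc$ and the already-proved Lemma \ref{lem-comm-MeA}.
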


\begin{proof}
Let $\f,\p \in \Dom(A)$. Since $\Ec$ is dense in $\Kc$ we can consider sequences $\seq \f n$ and $\seq \p n$ in $\Ec$ such that $\f_n \to \Gze \f$ and $\p_n \to \Gze^* \p$ in $\Kc$. Since $B\in \Lc(\Kc,\Kc^*)$ we have 
\[
\innp{B \f_n}{\p_m} \limt {n,m} {\infty} \innp{ \Gze B \Gze  \f} \p.
\]
On the other hand, since $\f_n, \p_m \in \Dom(A)$ and $A\f_n,A\p_m \in \Kc$ we can write
\[
\innp{B \f_n}{\p_m} = \innp{ [\tH,iA] \f_n}{\p_m} = \innp{ [\tH -i\e M -z,iA] \f_n}{\p_m} - \e \innp{ [M,A] \f_n}{\p_m}.
\]
According to Lemma \ref{lem-comm-MeA} we have
\[
 \innp{ [M,A] \f_n}{\p_m} \limt {n,m} \infty \innp{ \Gze [M,A]_\Kc\Gze  \f}{\p}.
\]
And finally
\begin{eqnarray*}
\lefteqn{\lim_{n,m \to \infty} \innp{ [\tH-i \e M -z,iA] \f_n}{\p_m}}\\
&& = i \lim_{n \to \infty} \lim _{m\to \infty} \innp{A\f_n}{(\tH -i \e M -z)^* \p_m}_{\Kc,\Kc^*} - i \lim _{m\to \infty}\lim_{n \to \infty} \innp{ (\tH -i\e M -z)\f_n}{A \p_m}_{\Kc^*,\Kc}\\
&& =  i\lim_{n \to \infty}  \innp{A\f_n}{ \p}_{\Kc,\Kc^*}-  i\lim _{m\to \infty} \innp{\f}{A \p_m}_{\Kc^*,\Kc}\\
&& =  i\lim_{n \to \infty}  \innp{A\f_n}{ \p}_{\Hc} -  i\lim _{m\to \infty} \innp{\f}{A \p_m}_{\Hc}\\
&& = i \lim_{n \to \infty}  \innp{\f_n}{A  \p}_{\Hc} - i\lim _{m\to \infty} \innp{A\f}{\p_m}_{\Hc}\\
&& = i \innp{\Gze  \f}{A  \p}_{\Hc} - i \innp{A\f}{\Gze^* \p}_{\Hc}.
\end{eqnarray*}
The lemma is proved.
\end{proof}

The strategy for the proof of Theorem \ref{th-mourre-form} is standard and relies on the following abstract result about ordinary differential equations (see Lemma 3.3 of \cite{jensenmp84}):

\begin{lemma} \label{lem-JMP}
Let $X$ be a Banach space, $\e_0 \in]0,1]$ and $f \in C^1 (]0,\e_0] , X)$. Suppose there exist $\g_1 \in [0,1]$, $\g_2 \in [0,1[$, $\g_3 \in \R$, and $c_1,c_2 > 0$ such that
\begin{equation*} %\label{lem3.3a}
\forall \e \in ] 0, \e_0[,\quad \nr{f'(\e)} \leq c_1 \e ^{-\g_2} (1 + \nr{f(\e)}^{\g_1} ) \quad \text{and} \quad \nr{f(\e)} \leq c_2\e^{-\g_3}.
\end{equation*}
Then $f$ has a limit at 0 and there exists $c \geq 0$ which only depends on $\e_0$, $\g_1$, $\g_2$, $\g_3$, $c_1$ and $c_2$ such that
\begin{equation*} %\label{lem3.3b}
\forall \e \in ]0, \e_0[,\quad \nr{f(\e)} \leq c.
\end{equation*}
\end{lemma}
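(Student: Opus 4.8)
The plan is to convert the two hypotheses into a single linear integral inequality for $g(\e):=1+\nr{f(\e)}$ and then apply Gronwall's lemma, integrating from $\e_0$ down towards the singularity at $0$. First I would absorb the exponent $\g_1$: since $\g_1\in[0,1]$ one has $\nr{f(\e)}^{\g_1}\leq 1+\nr{f(\e)}$, so the bound on $f'$ becomes
\[
\nr{f'(\e)}\leq 2c_1\,\e^{-\g_2}\,g(\e),\qquad \e\in\,]0,\e_0[\,.
\]
Since $f\in C^1(]0,\e_0],X)$, the fundamental theorem of calculus gives, for every fixed $\e\in\,]0,\e_0]$, the identity $f(\e)=f(\e_0)-\int_\e^{\e_0}f'(t)\,dt$, whence
\[
g(\e)\leq g(\e_0)+\int_\e^{\e_0}\nr{f'(t)}\,dt\leq g(\e_0)+2c_1\int_\e^{\e_0}t^{-\g_2}g(t)\,dt.
\]

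The crucial point is that $\g_2<1$, so the kernel $t^{-\g_2}$ is integrable at $0$, with $\kappa:=\int_0^{\e_0}t^{-\g_2}\,dt=\e_0^{1-\g_2}/(1-\g_2)<\infty$. Gronwall's lemma applied in the backward direction (integration over $[\e,\e_0]$, on which $g$ is continuous, hence bounded, so that the inequality is meaningful) then yields
\[
g(\e)\leq g(\e_0)\,e^{2c_1\kappa},\qquad \e\in\,]0,\e_0].
\]
To make the resulting constant independent of $f$ itself, I would use the second hypothesis only at the endpoint $\e_0$: $\nr{f(\e_0)}\leq c_2\e_0^{-\g_3}$, so $g(\e_0)\leq 1+c_2\e_0^{-\g_3}$. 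This gives
\[
\nr{f(\e)}\leq g(\e)\leq \big(1+c_2\e_0^{-\g_3}\big)\,\exp\!\Big(\tfrac{2c_1\e_0^{1-\g_2}}{1-\g_2}\Big)=:c
\]
for all $\e\in\,]0,\e_0[$, with $c$ depending only on $\e_0,\g_1,\g_2,\g_3,c_1,c_2$, as required.

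Finally, for the existence of $\lim_{\e\to0^+}f(\e)$ I would feed this uniform bound back into the hypothesis on $f'$: $\nr{f'(\e)}\leq 2c_1(1+c)\,\e^{-\g_2}$, which is integrable near $0$. Hence for $0<\e_1<\e_2\leq\e_0$,
\[
\nr{f(\e_2)-f(\e_1)}\leq\int_{\e_1}^{\e_2}\nr{f'(t)}\,dt\leq\frac{2c_1(1+c)}{1-\g_2}\big(\e_2^{1-\g_2}-\e_1^{1-\g_2}\big)\limt{\e_1,\e_2}{0}0,
\]
so $\big(f(\e)\big)_\e$ satisfies the Cauchy criterion as $\e\to0^+$ and converges since $X$ is complete. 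I do not anticipate a genuine obstacle: this is a routine Gronwall estimate, and the only points requiring a little care are that the integration runs towards the kernel's singularity at $0$ (harmless precisely because $\g_2<1$) and that the final constant must be tracked so as to depend on the data only --- which is exactly the role of the a priori polynomial bound, used here only at $\e=\e_0$.
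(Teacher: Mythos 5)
Your proof is correct. A remark on provenance: the paper does not prove this lemma at all, it quotes it as Lemma 3.3 of \cite{jensenmp84}, and the classical argument there is an iterative bootstrap rather than a Gronwall estimate: one inserts the a priori bound $\nr{f(\e)}\leq c_2\e^{-\g_3}$ into the bound on $f'$, integrates from $\e$ to $\e_0$, and obtains a new bound with exponent $\max(0,\g_1\g_3+\g_2-1)$ in place of $\g_3$; since $\g_1\leq 1$ and $\g_2<1$, the exponent decreases by at least a fixed amount (or contracts to the nonpositive fixed point $(\g_2-1)/(1-\g_1)$ when $\g_1<1$) at each step, so finitely many iterations give the uniform bound, after which integrability of $f'$ yields the limit exactly as in your last step. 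Your route replaces the bootstrap by a single backward Gronwall inequality, which is available precisely because $\g_1\leq 1$ lets you linearize via $\nr{f}^{\g_1}\leq 1+\nr{f}$, and it has the (correct, and slightly noteworthy) consequence that the polynomial a priori bound is only needed to control $f$ at the endpoint $\e_0$ --- strictly speaking the hypothesis is stated on $]0,\e_0[$, so you should bound $\nr{f(\e_0)}\leq c_2\e_0^{-\g_3}$ by continuity of $f$ at $\e_0$, a trivial point --- so that the final constant depends only on the data. Both arguments hinge on $\g_2<1$ making $t^{-\g_2}$ integrable at $0$; yours is shorter and self-contained, while the bootstrap is the form that survives in variants where $\g_1>1$ (under a smallness condition on $\g_1\g_3+\g_2$) and the a priori decay is genuinely used on the whole interval. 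For the application in the paper (estimate \eqref{estim-der-fze}, where the relevant pairs are $\g_1=1$, $\g_2=\tfrac12$ and $\g_1=\tfrac12$, $\g_2=\tfrac32-\d<1$), either proof applies verbatim.
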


Now we can prove Theorem \ref{th-mourre-form}:

\begin{proof}[Proof of Theorem \ref{th-mourre-form}]
\stepp 
For $\e \in ]0,1]$ we set $Q(\e) = \pppg {A}^{-\d} \pppg {\e A}^{\d -1}$. According to the functional calculus we have
\begin{equation} \label{estim-Qe}
\nr{Q(\e)}_{\Lc(\Hc)} \leq 1 \quad \text{and} \quad \nr{A Q(\e)}_{\Lc(\Hc)} + \nr{Q(\e) A}_{\Lc(\Hc)} \lesssim \e^{\d-1}.
\end{equation}
Denoting by a prime the derivative with respect to $\e$ we also have
\begin{equation} \label{estim-der-Qe}
\nr{Q'(\e)}_{\Lc(\Hc)} \lesssim \e^{\d-1}.
\end{equation}

\stepp
For $z \in \C_{I,+}$ we set
\[
 F_z(\e) = Q(\e) \Gze Q(\e).
\]
According to \eqref{estim-Qe} and Proposition \ref{prop-gze} applied with $Q = Q(\e)$ we have for $\e \in ]0,\e_0]$ ($\e_0$ being given by Proposition \ref{prop-gze})
\begin{equation} \label{estim-gzeQ-F}
\nr{F_z(\e)} \leq \nr{\Gze Q(\e)} \lesssim 1 + \frac {\nr{F_z(\e)}^{\frac 12}} {\sqrt \a \sqrt \e},
\end{equation}
and hence
\begin{equation} \label{estim-Fe}
\nr{F_z(\e)} \lesssim \frac 1 {\a \e}.
\end{equation}

\stepp
We now estimate the derivative of $F$:
\[
F'_z(\e) = Q'(\e) \Gze Q(\e) + Q(\e) \Gze Q'(\e) +  i Q(\e) G(\e) \Phi (B + \b \Th) \Phi G (\e) Q(\e) 
\]
Proposition \ref{prop-gze} and \eqref{estim-der-Qe} yield
\begin{equation} \label{estim-QGQ'}
\nr{Q'(\e) \Gze Q(\e) + Q(\e) \Gze Q'(\e)} \lesssim \e^{\d-1}\left(1 + \frac { \nr{F_z(\e)}^{\frac 12}}{\sqrt \a \sqrt \e} \right)
\end{equation}
and 
\begin{equation} \label{estim-QGTh}
\nr{Q(\e) G(\e) \Phi \Th \Phi G (\e) Q(\e)} \lesssim 1 + \nr{F_z(\e)}_{\Lc(\Hc)}. 
\end{equation}
For the remaining term we write in $\Lc(\Kc,\Kc^*)$
\[
\Phi B \Phi = B - \Phi B \Phi^\bot - \Phi^\bot B \Phi -  \Phi^\bot B \Phi^\bot.
\]
According to Proposition \ref{prop-gze} we have
\[
\nr{Q(\e) G(\e) \big(\Phi B \Phi^\bot + \Phi^\bot B \Phi +  \Phi^\bot B \Phi^\bot  \big)  G (\e) Q(\e) } \lesssim 1+ \frac {\nr{F_z(\e)}}{\sqrt \e}.
\]

\stepp
According to Lemma \ref{lem-GMAG} we have on $\Lc(\Hc)$:
\begin{equation*} %\label{dec-QGBGQ}
\begin{aligned} 
Q(\e) \Gze B \Gze Q(\e) 
& = i Q(\e) A \Gze Q(\e) - i Q(\e) \Gze A Q(\e) \\
& \quad - \e Q(\e) \Gze [M,A]_\Kc \Gze Q(\e).
\end{aligned}
\end{equation*}
With \eqref{estim-Qe}, Proposition \ref{prop-gze} and Lemma \ref{lem-comm-MeA} we get
\[
\nr{ Q(\e) \Gze B \Gze Q(\e) } \lesssim 1 + \a^{-\frac 12} \e ^{\d - \frac 32} \nr{F_z(\e)}^{\frac 12} + \nr{F_z(\e)}. 
\]
Together with \eqref{estim-QGQ'} and \eqref{estim-QGTh} this gives
\begin{equation} \label{estim-der-fze}
\nr{\a F'_z(\e)} \lesssim \e^{\d-1} + \e^{-\frac 12} \nr{\a F_z(\e)} +  \e^{\d-\frac 32}\nr{\a F_z(\e)}^{\frac 12},
\end{equation}
and hence, according to Lemma \ref{lem-JMP}, we finally obtain
\begin{equation} \label{estim-fze-fin}
\nr{F_z(\e)} \lesssim \frac 1 \a,
\end{equation}
which gives the uniform resolvent estimates \eqref{estim-res} when $\e$ goes to 0.

\stepp 
Now we prove the limiting absorption principle on $I$. Without loss of generality we can assume that $\d \in \big] \frac 12, 1 \big]$. We prove that there exists $C \geq 0$ such that for all $z,z' \in \C_{I,+}$ we have
\begin{equation} \label{estim-res-diff}
\nr{\pppg A ^{-\d} \big( (H-z)\inv - (H-z')\inv \big) \pppg A^{-\d}}_{\Lc(\Hc)} \lesssim \a ^{-\frac {4\d}{2\d+1}} \abs{z - z'}^{\frac {2\d-1}{2\d+1}}.
\end{equation}
For any $c_0 > 0$, \eqref{estim-res-diff} is a direct consequence of the uniform estimate \eqref{estim-res} as long as $\abs{z-z'} \geq c_0 \a$, so it is enough to prove \eqref{estim-res-diff} when $\abs {z-z'} \leq c_0 \a$ for some  well chosen $c_0 > 0$. According to \eqref{estim-der-fze} and \eqref{estim-fze-fin} we have
\[
\nr{F_z'(\e)} \lesssim \a \inv \e ^{\d - \frac 32}, 
\]
and hence 
\[
\nr{F_z(\e) - F_z(0)} \lesssim \a \inv \e ^{\d - \frac 12}.
\]
Of course we have the same estimate for $z'$. Moreover, according to \eqref{estim-gzeQ-F} we have for all $\e \in ]0,\e_0]$
\[
\nr{\frac {\partial} {\partial z} F_z(\e)} = \nr{Q(\e) \Gze^2 Q(\e)} \leq \nr{\Gze Q(\e)} \lesssim \frac 1 {\a^2 \e},
\]
and hence
\[
\nr{F_z(\e) - F_{z'}(\e)} \lesssim \frac { \abs{z-z'} }{\a^2 \e}.
\]
Given $z$ and $z'$ we take
\[
\e = \left( \frac {\abs {z-z'}}\a \right)^{\frac 2 {2\d + 1}}.
\]
If $c_0$ was chosen small enough then $\e \in ]0,\e_0]$, and we obtain
\[
\nr{F_z(0) - F_{z'}(0)} \lesssim \a ^{-\frac {4\d}{2\d+1}} \abs{z-z'}^{\frac {2\d-1}{2\d+1}},
\]
which is exactly \eqref{estim-res-diff}. Now for all $ \l \in I$ the function
\[
\m \mapsto \pppg A^{-\d} \big( H - (\l+i\m)\big)\inv \pppg A^{-\d}
\]
has a limit when $\m$ goes to $0^+$. Taking the limit $\Im z, \Im z' \to 0^+$ in \eqref{estim-res-diff} proves that this limit is a H\"older-continuous function of index $\frac {2\d-1}{2\d+1}$.
\end{proof}

To conclude we have to give a proof of Lemma \ref{lem-comm-MeA}:

\newcommand{\Halph}{\tilde H_\th}\newcommand{\Hbeta}{\tilde H_\t}

\begin{proof} [Proof of Lemma \ref{lem-comm-MeA}] The proof is inspired by the proof of Lemma 1.2.1 in \cite{amreinbg}.

\stepp 
For $\th \in \R$ we set
\[
\Halph = e^{i\th A} \tHo e^{-i\th A}  \in \Lc(\Kc,\Kc^*).
\]
We first prove that the map $\th \mapsto \Halph$ is strongly $C^1$ and that for all $\th,\t \in \R$ and $\f \in \Kc$ we have in $\Kc^*$
\begin{equation} \label{notes128}
\big(\Hbeta - \Halph\big)\f = - \int_\th^\t e^{is A} B_0 e^{-is A}\f \, ds.
\end{equation}
This gives in particular 
\begin{equation} \label{estim-tildeH}
\nr{\Hbeta - \Halph}_{\Lc(\Kc,\Kc^*)} \lesssim \abs{\t - \th} \nr{B_0}_{\Lc(\Kc,\Kc^*)}.
\end{equation}
Let $\th \in \R$ and $\f \in \Ec$. For $\e \in \R^*$ we have
\begin{equation} \label{notes1210}
\frac {\tilde H_{\th+\e} - \Halph}{\e} \f = e^{i(\th+\e)A} \tHo \frac {e^{-i\e A} - 1} \e  e^{-i\th A} \f + e^{i \th A}  \frac {e^{i\e A} - 1}\e \tHo   e^{-i\th A} \f.
\end{equation}
Since $e^{-i\th A} \f \in \Ec$ we have
\[
\frac {e^{-i\e A} - 1} \e e^{-i\th A} \f \xrightarrow[\e \to 0]{\Kc} -iA e^{-i\th A} \f,
\]
and hence the first term in the right-hand side of \eqref{notes1210} goes to $-i e^{i\th A} \tHo A e^{-i\th A}\f$ in $\Kc^*$ when $\e$ goes to 0.
Now let $g = \tHo   e^{- i \th A} \f \in \Kc^*$. Since $\Dom(A)$ is dense in $\Kc^*$, we can consider a sequence $\seq g n \in \Dom(A)^\N$ such that $g_n \to g$ in $\Kc^*$. For all $n\in\N$ we have in $\Hc$:
\[
\frac {e^{i\e A} - 1}{\e} g_n -iA g_n = \frac i \e \int_0^\e(e^{i\t A}-1) A g_n \, d\t.
\]
In $\Ec^*$ we can let $n$ go to infinity (we use the Lebesgue dominated convergence theorem for the right-hand side). We obtain that the equality holds in $\Ec^*$ when $g_n$ is replaced by $g$,
and hence the second term in the right-hand side of \eqref{notes1210} goes to $i e^{i \th A} A \tHo e^{-i\th A} \f$ in $\Ec^*$. This proves that the map $\th \mapsto \Halph \f$ is differentiable with derivative $- e^{i\th A} [\tHo,iA] e^{-i\th A}\f \in \Ec^*$, and hence \eqref{notes128} holds in $\Lc(\Ec,\Ec^*)$. Since $B_0 = [\tHo,iA]$ extends to an operator in $\Lc(\Kc,\Kc^*)$, this is the case for both terms in \eqref{notes128} and we have the equality in $\Lc(\Kc,\Kc^*)$.

\stepp
On $\Lc(\Kc,\Kc^*)$ we have $[\tHo , e^{i\th A}] = (\tHo - \Halph)e^{i\th A}$ and hence for $t \in \R$ and $\th \in \R^*$ we have in the strong sense in $\Lc(\Kc,\Kc^*)$:
\begin{align*}
e^{it \Ho} \frac {e^{i\th A} - 1}{i\th} -  \frac {e^{i\th A} - 1}{i\th} e^{it \Ho}
& = \frac 1{i\th} \int_0^t e^{is\Ho} [i\tHo , e^{i\th A}] e^{i(t-s)\Ho}\,ds\\
& = \int_0^t e^{is\Ho} \frac {\tHo - \Halph}\th e^{i\th A} e^{i(t-s) \Ho}\, ds.
\end{align*}
The operator $e^{i\th A}$ goes strongly to 1 in $\Lc(\Kc)$ and $\frac {e^{i\th A} - 1} {i\th}$ converges strongly to $A$ in $\Lc(\Ec,\Kc)$ and $\Lc(\Kc^*, \Ec^*)$.
Moreover
\[
e^{is\Ho} \frac {\tHo - \Halph}\th e^{i\th A} e^{i(t-s) \Ho}
\]
is uniformly bounded in $\Lc(\Kc,\Kc^*)$ according to \eqref{estim-tildeH}. Since $\frac {\tHo - \Halph}\th \to B_0$ strongly in $\Lc(\Kc,\Kc^*)$ when $\th$ goes to 0 (see \eqref{notes128}) we can apply Lebesgue dominated convergence to obtain
\[
 [e^{it\Ho},A] = \int_0^t e^{is\Ho} B_0 e^{i(t-s)\Ho} \, ds,
\]
in the strong sense in $\Lc(\Ec,\Ec^*)$. But the right-hand side defines an operator in $\Lc(\Kc,\Kc^*)$, so the operator on the left has an extension in $\Lc(\Kc,\Kc^*)$ and
\begin{equation} \label{estim-eHo-A}
\nr{ [e^{it\Ho},A]}_{\Lc(\Kc,\Kc^*)} \lesssim \abs t \nr{B_0}_{\Lc(\Kc,\Kc^*)}.
\end{equation}

\stepp Let $\p : x \mapsto \vf(x) (x-i)^2$ and $\Psi = \p(H_0)$. We have $ \Phi= (\Ho-i)\inv \Psi(\Ho-i)\inv$. On $\Lc(\Ec,\Ec^*)$ we have 
\[
[\Psi , A] = \frac 1 {\sqrt {2\pi}} \int_\R [e^{it\Ho} , A] \hat \p (t)\,dt.
\]
The right-hand side extends to an operator in $\Lc(\Kc,\Kc^*)$. Then this is also the case for the left-hand side, and moreover
\[
\nr{[\Psi , A]}_{\Lc(\Kc,\Kc^*)} \lesssim  \nr{B_0}_{\Lc(\Kc,\Kc^*)}\int_\R \abs {t \hat \p(t)}\,dt.
\]
Then
\begin{align*}
[\Phi,A]
& = [(\Ho-i)\inv,A] \Psi (\Ho-i)\inv + (\Ho-i)\inv[\Psi ,A]  (\Ho-i)\inv\\
&\quad  + (\Ho-i)\inv \Psi [(\Ho-i)\inv,A] .
\end{align*}
Since 
\[
[(\Ho-i)\inv,A] = i(\Ho-i)\inv B_0 (\Ho-i)\inv \in \Lc(\Kc^*, \Kc),
\]
this proves that $[\Phi, A] \in \Lc(\Kc^*,\Kc)$ and
\[
\nr{[\Phi, A]} _{\Lc(\Kc^*,\Kc)} \lesssim \nr{B_0}_{\Lc(,\Kc,\Kc^*)}
\]

\stepp Now it only remains to write
\[
[M_0 , A] =  [\Phi,A] (B + \b \Th) \Phi + \Phi [B+\b\Th,A]  \Phi + \Phi (B+\b \Th)   [\Phi,A]
\]
to conclude the proof.
\end{proof}

\section{Multiple commutator estimates} \label{sec-multiple}

\newcommand{\GTh}{G_z^\Th(\e)}
\newcommand{\Gzpe}{G_z^\bot(\e)}
\newcommand{\Gzue}{G_z^1(\e)}
\newcommand{\Gzne}{G_z^n(\e)}
\newcommand{\GzNe}{G_z^N(\e)}
\newcommand{\Gzn}{G_z^n}

In this section we generalize the multiple resolvent estimates known for a self-adjoint operator (see \cite{jensenmp84, jensen85}) or for the perturbation by a dissipative operator (see \cite{art-mourre, boucletr14}).

Let $N\geq 2$ be fixed for all this section. We will use the notation of Definition \ref{def-conj-N-refined}. Thus the symbol `` $\lesssim $ '' will stand for `` $\leq C$ '' where $C$ is a constant which depends on $C_\Th$, $I$, $J$, $\d$, $\b$ and $\Upsilon_N$.\\

For $n \in \Ii 1 N$ and $\e \in ]0,1]$ we set
\begin{equation} \label{def-Cn}
C_n(\e) = \sum_{j=1}^n \frac {(-i\e)^j}{j!}  B_j  \in \Lc(\Kc,\Kc^*).
\end{equation}
In order to prove multiple resolvent estimates, we first need some estimates for the inverse of $\big( \tH+C_n(\e)-z\big)$. It is not clear that this operator has an inverse, since for $n \geq 3$ there is an anti-dissipative term in $C_n(\e)$, but it will be the case for $\e$ small enough. The following result generalizes Lemma 3.1 in \cite{jensenmp84} (see also Lemma 3.1 in \cite{art-mourre}) to our setting:

\begin{proposition} \label{prop-gnze}
Suppose $A$ is a conjugate operator for $H$ up to order $N$ on $J$ with bounds $(\a,\b,\Upsilon_N)$.
\begin{enumerate} [(i)]
\item There exists $\e_N > 0$ such that for $n\in \Ii 1 N$, $z \in \C_{I,+}$ and $\e \in ]0,\e_N]$ the operator $\big(\tH + C_n(\e) - z\big)$ has a bounded inverse in $\Lc(\Kc^*,\Kc)$, which we denote by $\Gzne$.
\item For $n\in\N$, $z \in \C_{I,+}$ and $\e \in ]0,\e_N]$ we have
\[
\nr{\Gzne}_{\Lc(\Kc^*,\Kc)} \lesssim \frac {1}{\a \e}
\]
and 
\[
\nr{\Gzne \pppg A \inv}_{\Lc(\Hc,\Kc)} \lesssim \frac {1}{\a \sqrt \e}.
\]

\item The function $\e \in ]0,\e_N[ \mapsto \Gzne$ is differentiable in $\Lc(\Kc^*,\Kc)$. Moreover in $\Lc(\Dom(A),\Dom(A)^*)$ we have the equality
\[
\frac d {d\e} \Gzne = \left[ \Gzne , A \right] -i \frac {(-i\e)^n}{n!} \Gzne B_{n+1} \Gzne.
\]

\end{enumerate}

\end{proposition}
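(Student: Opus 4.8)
The plan is to mimic closely the structure of the proof of Theorem \ref{th-mourre-form} and of the corresponding lemma in \cite{jensenmp84}, treating $C_n(\e)$ as a small perturbation of the already controlled operator $\tH - i\e M$ — except that here the natural ``zeroth order'' term is just $\tH$ and the commutators $B_1,\dots,B_{n}$ enter with explicit powers of $\e$. First I would establish invertibility, point (i): for $n = 1$ the operator $\big(\tH - i\e B_1 - z\big) = \big(\tH - i\e B - z\big)$ is, up to the lower-order term $\e\b\Phi\Th\Phi$ versus $\e B$, essentially the operator $\Gze$ (with $M = \Phi(B+\b\Th)\Phi$) studied in Proposition \ref{prop-gze}; but for general $n$ we cannot rely on dissipativity because of the anti-dissipative terms $(-i\e)^j B_j/j!$ with $j$ even or with $B_j$ not sign-definite. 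The standard trick is a Neumann-series / perturbative argument: write
\[
\tH + C_n(\e) - z = \big(\tH - i\e B_1 - z\big)\Big( \Id + \Gzue \big(C_n(\e) + i\e B_1\big)\Big),
\]
and observe that $C_n(\e) + i\e B_1 = \sum_{j=2}^n \frac{(-i\e)^j}{j!}B_j$ has norm $\lesssim \e^2 \a$ in $\Lc(\Kc,\Kc^*)$ (using the bound $\sum_{n=2}^{N+1}\nr{B_n} \le \a(\Upsilon_N - \Upsilon)$ from Definition \ref{def-conj-N-refined}), while $\nr{\Gzue}_{\Lc(\Kc^*,\Kc)} \lesssim \frac{1}{\a\e}$ from Corollary \ref{cor-nr-gze} (applied to $\Gzue$, which is of the form $\Gze$). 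Hence $\nr{\Gzue(C_n(\e)+i\e B_1)}_{\Lc(\Kc)} \lesssim \e$, which is $< \frac12$ once $\e \le \e_N$ for some $\e_N > 0$ depending only on the allowed constants; so the factor is invertible by Neumann series and $\Gzne$ exists in $\Lc(\Kc^*,\Kc)$.

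For point (ii) I would first handle $n=1$: the two estimates $\nr{\Gzue}_{\Lc(\Kc^*,\Kc)} \lesssim \frac{1}{\a\e}$ and $\nr{\Gzue\pppg A^{-1}}_{\Lc(\Hc,\Kc)} \lesssim \frac{1}{\a\sqrt\e}$ follow from Proposition \ref{prop-gze} and Corollary \ref{cor-nr-gze} — the first is Corollary \ref{cor-nr-gze} directly, and the second is \eqref{estim-phi-gze}+\eqref{estim-phibot-gze} applied with $Q = \pppg A^{-1}$ together with the quadratic estimate $\nr{Q^*\Gzue Q} \le \nr{\Gzue\pppg A^{-1}}\lesssim \frac1\a$ (this last being exactly the content of the Theorem \ref{th-mourre-form} argument with $\d=1$, i.e. $F_z(\e)$ with $\d=1$ is bounded by $\frac1\a$, though with a slightly different weight it is genuinely the statement $\nr{\pppg A^{-1}\Gzue\pppg A^{-1}}\lesssim\frac1\a$). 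Then for general $n$ I propagate these bounds through the Neumann series: $\Gzne = \big(\Id + \Gzue(C_n(\e)+i\e B_1)\big)^{-1}\Gzue$, the correction factor is $\Id + O(\e)$ in $\Lc(\Kc)$, hence also (after conjugating by $\pppg A$, using that $[\pppg A, \cdot]$-type errors are absorbed — here one needs $B_j \in \Lc(\Kc,\Kc^*)$ and nothing about commutators with $A$ beyond what is assumed) it is $\Id + O(\e)$ in the relevant topologies, so the $n$-dependent resolvent satisfies the same bounds as $\Gzue$ up to constants.

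For point (iii), differentiability in $\Lc(\Kc^*,\Kc)$ of $\e\mapsto\Gzne$ follows from the Neumann-series representation together with differentiability of $\e\mapsto\Gzue$ (which is Lemma \ref{lem-GMAG}-type; alternatively one differentiates the resolvent identity directly) and of $\e\mapsto C_n(\e)$, which is polynomial. For the explicit formula I would differentiate $\big(\tH + C_n(\e) - z\big)\Gzne = \Id$ to get
\[
\frac{d}{d\e}\Gzne = -\Gzne\Big(\frac{d}{d\e}C_n(\e)\Big)\Gzne = -\Gzne\Big(\sum_{j=1}^n \frac{(-i\e)^{j-1}}{(j-1)!}(-i)B_j\Big)\Gzne = i\sum_{j=1}^n\frac{(-i\e)^{j-1}}{(j-1)!}\Gzne B_j\Gzne,
\]
and then I would identify the telescoping structure: writing $B_j = [B_{j-1},iA]$ for $j\ge 2$ (and $B_1 = B = [\tH,iA]$, with $[C_n(\e)\text{-part},iA]$ producing the shift of indices) one gets, exactly as in Lemma \ref{lem-GMAG}, that $\Gzne(\tH + C_{n}(\e) - z)[\cdot,iA]$-terms collapse into $[A,\Gzne]$ plus the single leftover boundary term $-i\frac{(-i\e)^n}{n!}\Gzne B_{n+1}\Gzne$ coming from the top of the telescope. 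Concretely, $\sum_{j=1}^n\frac{(-i\e)^{j-1}}{(j-1)!}\Gzne B_j\Gzne$ should be rearranged using $B_j = [B_{j-1},iA]$ and the commutation trick of Lemma \ref{lem-GMAG} ($\Gzne B_1\Gzne = iA\Gzne - i\Gzne A + \text{(correction from }C_n)$) applied iteratively; the corrections telescope and leave precisely $[\Gzne,A] - i\frac{(-i\e)^n}{n!}\Gzne B_{n+1}\Gzne$. The validity on $\Lc(\Dom(A),\Dom(A)^*)$ is because, just as in Lemma \ref{lem-GMAG}, one tests against $\f,\p\in\Dom(A)$, approximates by sequences in $\Ec$, and uses $B_j\in\Lc(\Kc,\Kc^*)$, $B_{n+1}\in\Lc(\Kc,\Kc^*)$ to pass to the limit.

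The main obstacle I anticipate is point (iii)'s telescoping identity: getting the combinatorics of the $\frac{(-i\e)^j}{j!}$ coefficients to line up so that all the intermediate commutator terms cancel and exactly one leftover $B_{n+1}$ term of order $\e^n$ survives, while simultaneously justifying each manipulation in the weak sense on $\Lc(\Dom(A),\Dom(A)^*)$ (the regularization argument of Lemma \ref{lem-GMAG} has to be run inside the differentiated identity, which is slightly delicate because $\frac{d}{d\e}\Gzne$ a priori only lives in $\Lc(\Kc^*,\Kc)$). Everything in (i) and (ii) is a routine perturbative bootstrap off Proposition \ref{prop-gze} and Corollary \ref{cor-nr-gze}.
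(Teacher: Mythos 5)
Your reduction of the general case $n\ge 2$ to $n=1$ (perturbing by $C_n(\e)-C_1(\e)=\sum_{j= 2}^n\frac{(-i\e)^j}{j!}B_j$, of norm $\lesssim\a\e^2$ in $\Lc(\Kc,\Kc^*)$, against an inverse of norm $\lesssim\frac1{\a\e}$) is exactly the paper's last step, and your treatment of (iii) is also essentially the paper's: differentiating $(\tH+C_n(\e)-z)G_z^n(\e)=\Id$ gives $-G_z^n(\e)C_n'(\e)G_z^n(\e)$, and the ``telescoping'' you worry about is in fact the one-line algebraic identity $C_n'(\e)=[\tH+C_n(\e)-z,A]-\frac{(-i\e)^n}{n!}[B_n,A]$ (in the form sense on $\Ec$), after which a single application of the commutation trick of Lemma \ref{lem-GMAG} yields $[G_z^n(\e),A]-i\frac{(-i\e)^n}{n!}G_z^n(\e)B_{n+1}G_z^n(\e)$; no iterated telescope is needed. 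The genuine gap is at the base case $n=1$. You assert that $(\tH-i\e B-z)\inv$ ``is of the form $\Gze$'' and invoke Proposition \ref{prop-gze} and Corollary \ref{cor-nr-gze} for it directly. Those results concern $\Gze=(\tH-i\e M-z)\inv$ with $M=\Phi(B+\b\Th)\Phi$: the spectral cut-offs $\Phi$ and the added $\b\Th$ are precisely what make $H-i\e M$ maximal dissipative (via $M_0\ge\a\Phi^2\ge 0$), hence invertible for free. The operator $\tH-i\e B-z$ has no such dissipativity, since $B_0$ is only positive after localization by $\1 J(H_0)$, so neither its invertibility nor the bound $\frac1{\a\e}$ is available from those statements.

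Nor can one pass from $\Gze$ to $(\tH-i\e B-z)\inv$ by the naive perturbation argument your wording suggests: the discrepancy $\e\,(B-\Phi B\Phi-\b\Phi\Th\Phi)$ has norm of order $\e\,\a\Upsilon$ in $\Lc(\Kc,\Kc^*)$ while $\nr{\Gze}_{\Lc(\Kc^*,\Kc)}\sim\frac1{\a\e}$, so the relevant product is $O(\Upsilon)$, not small, and the $\Th$-term alone gives only $O(1/\a)$. Making this passage work is the bulk of the paper's proof: three successive applications of Lemma \ref{lem-res-perturb} with \emph{factored} perturbations (first $i\e\b\sqrt{\Phi\Th\Phi}\cdot\sqrt{\Phi\Th\Phi}$, then $i\e\Phi^\bot B\pppg{H_0}^{-1/2}\cdot\pppg{H_0}^{1/2}\Phi$, then $i\e B\pppg{H_0}^{-1/2}\cdot\pppg{H_0}^{1/2}\Phi^\bot$), where smallness of $P_2T\inv P_1$ comes from the refined estimates of Proposition \ref{prop-gze} — the uniform bound on $\sqrt{\Phi\Th\Phi}\,\Gze\,\sqrt{\Phi\Th\Phi}$ and the gain $\nr{\Phi^\bot\Gze}\lesssim(\a\e)^{-1/2}$ — combined with $\nr B\le\sqrt\a\,\Upsilon$ from Definition \ref{def-conj-refined}, producing factors $O(\e)$ or $O(\sqrt\e)$. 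Without this (or an equivalent) argument your bootstrap has no starting point; likewise the bound $\nr{\pppg A^{-1}G\pppg A^{-1}}\lesssim\frac1\a$ you quote from Theorem \ref{th-mourre-form} (with $\d=1$) is proved for $\Gze$, not for $(\tH-i\e B-z)\inv$, and must be transported along the same chain before it can be fed into an estimate like \eqref{estim-gze} to get $\nr{G_z^1(\e)\pppg A\inv}\lesssim\frac1{\a\sqrt\e}$.
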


For the proof of Proposition \ref{prop-gnze} we need the following lemma, inspired by the standard technique for factored perturbations (see \cite{kato66}):

\begin{lemma} \label{lem-res-perturb}
Let $T \in \Lc(\Kc,\Kc^*)$ and assume that $T$ has an inverse $T\inv \in \Lc(\Kc^*,\Kc)$. Let $P_1 \in \Lc(\Hc,\Kc^*)$ and $P_2 \in \Lc(\Kc,\Hc)$ be such that $\nr{P_2 T\inv P_1}_{\Lc(\Hc)} < 1$. Then $T + P_1P_2 \in \Lc(\Kc,\Kc^*)$ has a bounded inverse given by $T\inv - T\inv P_1 \G\inv P_2 T\inv \in \Lc(\Kc^* ,\Kc)$, where $\G = 1 + P_2 T\inv P_1 \in \Lc(\Hc)$.
\end{lemma}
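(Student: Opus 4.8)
The plan is to verify directly that the proposed formula is a two-sided inverse by algebraic manipulation, treating the factorization $P_1 P_2$ as a rank-type perturbation of $T$. First I would observe that all the operators in play compose correctly: $T^{-1} \in \Lc(\Kc^*,\Kc)$, $P_1 \in \Lc(\Hc,\Kc^*)$ so $T^{-1}P_1 \in \Lc(\Hc,\Kc)$, and $P_2 \in \Lc(\Kc,\Hc)$ so $P_2 T^{-1} P_1 \in \Lc(\Hc)$; the hypothesis $\nr{P_2 T^{-1} P_1}_{\Lc(\Hc)} < 1$ then guarantees via the Neumann series that $\G = 1 + P_2 T^{-1} P_1 \in \Lc(\Hc)$ is invertible with $\G^{-1} \in \Lc(\Hc)$. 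Hence the candidate $S := T^{-1} - T^{-1} P_1 \G^{-1} P_2 T^{-1}$ is a well-defined element of $\Lc(\Kc^*,\Kc)$.

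Next I would compute $(T + P_1 P_2) S$ in $\Lc(\Kc^*)$. Expanding,
\[
(T + P_1 P_2) S = \Id + P_1 P_2 T^{-1} - P_1 \G^{-1} P_2 T^{-1} - P_1 P_2 T^{-1} P_1 \G^{-1} P_2 T^{-1}.
\]
The key cancellation is that the three last terms combine: factoring $P_1$ on the left and $P_2 T^{-1}$ on the right, the middle bracket is $P_2 T^{-1} - \G^{-1} - (P_2 T^{-1} P_1)\G^{-1} = P_2 T^{-1} - (1 + P_2 T^{-1} P_1)\G^{-1} = P_2 T^{-1} - \G \G^{-1} = P_2 T^{-1} - P_2 T^{-1}$, which is $P_2 T^{-1} - P_2 T^{-1}$... more precisely one gets $\big(P_2 T^{-1} - \G^{-1} - P_2 T^{-1} P_1 \G^{-1}\big) = P_2 T^{-1} - (1 + P_2 T^{-1}P_1)\G^{-1} = 0$, so $(T+P_1P_2)S = \Id_{\Kc^*}$. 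The computation of $S(T + P_1 P_2) = \Id_{\Kc}$ is entirely symmetric, using instead the identity $\G^{-1}(1 + P_2 T^{-1} P_1) = \Id_{\Hc}$ after factoring $T^{-1}$ on the left and $P_2$ on the right. This shows $T + P_1 P_2$ is invertible with inverse $S$.

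I do not expect any serious obstacle here; the only points requiring a little care are bookkeeping of which space each composition lives in (so that the products $P_2 T^{-1} P_1$, $T^{-1} P_1 \G^{-1} P_2 T^{-1}$ etc. all make sense as bounded maps between the appropriate spaces among $\Hc$, $\Kc$, $\Kc^*$), and ensuring that the algebraic identities are applied in the correct operator space ($\Lc(\Kc^*)$ for the first product, $\Lc(\Kc)$ for the second). The argument is the standard Kato factored-perturbation trick, so the proof is short: set up $\G$ and $\G^{-1}$, write down $S$, and check both products equal the identity via the cancellation described above.
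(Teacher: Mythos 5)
Your proposal is correct and is essentially the paper's own proof: invertibility of $\G$ via the Neumann series, then direct verification that both products $(T+P_1P_2)S$ and $S(T+P_1P_2)$ equal the identity through the cancellation $1-\G^{-1}-P_2T^{-1}P_1\G^{-1}=1-(1+P_2T^{-1}P_1)\G^{-1}=0$. Only a notational slip to clean up: once you factor $P_1$ on the left and $P_2T^{-1}$ on the right, the bracket's first term is $1$, not $P_2T^{-1}$, so the vanishing quantity is $1-(1+P_2T^{-1}P_1)\G^{-1}$ rather than the expression you wrote.
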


\begin{proof} [Proof of Lemma \ref{lem-res-perturb}]
The assumptions ensure that $\G$ is bounded on $\Hc$ with bounded inverse, so the operator $R = T\inv - T\inv P_1 \G\inv P_2 T\inv$ is well-defined in $\Lc(\Kc^* ,\Kc)$. We only have to check that $R$ is indeed an inverse for $T + P_1P_2$. On $\Kc^*$ we have
\begin{align*}
(T+P_1P_2) R 
& = 1 + P_1P_2 T\inv - P_1 \G\inv P_2 T\inv - P_1P_2 T\inv P_1 \G\inv P_2 T\inv \\
& = 1 + P_1 \big(1 - \G\inv - P_2T\inv P_1 \G\inv\big) P_2 T\inv\\
& = 1 + P_1 \big(1 - (1 + P_2T\inv P_1)\G\inv \big) P_2 T\inv\\
& = 1.
\end{align*}
Similarly we have on $\Kc$:
\begin{align*}
R (T + P_1P_2)
& = 1 + T\inv P_1P_2 - T\inv P_1 \G\inv P_2 - T\inv P_1 \G\inv P_2 T\inv P_1P_2 \\
& = 1 + T\inv P_1 \big( 1 - \G \inv - \G\inv P_2T\inv P_1  \big) P_2 \\
& = 1. \qedhere
\end{align*}
\end{proof}

\begin{proof} [Proof of Proposition \ref{prop-gnze}]
We use the notation introduced in Section \ref{sec-res-estim-lap}.

\stepp
Let $\e_0 > 0$ be given by Proposition \ref{prop-gze}. The operator $\Phi \Th \Phi$ is bounded and self-adjoint on $\Hc$. It is also non-negative, so its square root $\sqrt {\Phi \Th \Phi}$ is well-defined as a bounded operator on $\Hc$.
As in Proposition \ref{prop-gze}, we write $\Kc_0$ either for $\Hc$ or $\Kc$. Then for $Q \in \Lc(\Kc_0^*)$, $z \in \C_{I,+}$, $\e \in ]0,\e_0]$ and $\f \in \Kc_0^*$ we have according to Proposition \ref{prop-gze}:
\begin{align*}
\innp { \Th \Phi \Gze Q \f}  {\Phi \Gze Q \f} \lesssim \left(\nr{Q}_{\Lc(\Kc_0^*)}^2 + \nr{Q^* \Gze Q}_{\Lc(\Kc_0^* ,\Kc_0)}\right) \nr{\f}_{\Kc_0^*}^2.
\end{align*}
This proves that 
\begin{equation} \label{estim-sqrtTh-Q}
 \nr{\sqrt {\Phi \Th \Phi} \Gze Q} _{\Lc(\Kc_0^*,\Kc_0)} \lesssim \nr{Q}_{\Lc(\Kc_0^*)} + \nr{Q^* \Gze Q}^{\frac 12}_{\Lc(\Kc_0^*,\Kc_0)}.
\end{equation}
Applied with $Q = \sqrt {\Phi \Th \Phi} \in \Lc(\Hc)$, this gives
\begin{equation} \label{norm-sqrt-sqrt}
\sup_{\substack{z \in \C_{I,+} \\ \e \in ]0,\e_0]}} \nr{ \sqrt {\Phi \Th \Phi} \Gze \sqrt {\Phi \Th \Phi}}_{\Lc(\Hc)} < + \infty.
\end{equation}

\stepp For $z \in \C_{I,+}$ and $\e \in ]0,\e_\Th]$ (where $\e_\Th \in ]0,\e_0]$ is chosen small enough) we can apply Lemma \ref{lem-res-perturb} with $T = (\tH-i\e M -z) \in \Lc(\Kc,\Kc^*)$, $P_1 = i\e \b \sqrt {\Phi \Th \Phi} \in \Lc(\Hc)$ and $P_2 = \sqrt {\Phi \Th \Phi}\in \Lc(\Hc)$. We obtain that the operator $(\tH-i\e  \Phi B \Phi -z)$ has a bounded inverse $\GTh \in \Lc(\Kc^*,\Kc)$, given by
\begin{equation} \label{def-GTh}
\GTh = \Gze - i \e \b \Gze \sqrt {\Phi \Th \Phi} \G^\Th_{z}(\e) \inv  \sqrt {\Phi \Th \Phi} \Gze ,
\end{equation}
where 
\[
\G^\Th_{z}(\e) =  1 + i \e \b \sqrt {\Phi \Th \Phi} \Gze\sqrt {\Phi \Th \Phi} \in \Lc(\Hc).
\]
In particular $\G^\Th_{z}(\e) \inv$ is bounded in $\Lc(\Hc)$ uniformly with respect to $z \in \C_{I,+}$ and $\e \in ]0,\e_\Th]$. Corollary \ref{cor-nr-gze} and estimate \eqref{estim-sqrtTh-Q} applied with $Q = \Id_{\Kc^*}$ give
\[
 \nr{\sqrt {\Phi \Th \Phi} \Gze} _{\Lc(\Kc^*,\Hc)}  \lesssim \frac 1 {\sqrt \a \sqrt \e}.
\]
With the similar estimate for $\Gze \sqrt {\Phi \Th \Phi}$ and \eqref{def-GTh} we obtain
\begin{equation} \label{estim-GTh}
\nr{\GTh} _{\Lc(\Kc^*,\Kc)} \lesssim \frac 1 {\a \e}.
\end{equation}
With Proposition \ref{prop-gze} we can check similarly that 
\begin{equation} \label{estim-GTh-bot}
\nr{\GTh  \Phi^\bot}_{\Lc(\Kc^*,\Kc)} \lesssim \frac 1 {\sqrt{\a} \sqrt \e}
\end{equation}
and
\begin{equation} \label{estim-GTh-Ainv}
\nr{\GTh  \pppg {A}\inv }_{\Lc(\Hc,\Kc)}  \lesssim \frac 1 { {\a}\sqrt \e}.
\end{equation}

\stepp Now we want to apply Lemma \ref{lem-res-perturb} with $T = (\tH-i \e \Phi B \Phi-z)$, $P_1 = i\e \Phi^\bot B \pppg {H_0}^{-\frac 12}$ and $P_2 = \pppg {H_0}^{\frac 12} \Phi$. According to \eqref{estim-GTh-bot} we have 
\[
\e \nr{\pppg {H_0}^{\frac 12} \Phi \GTh \Phi^\bot B \pppg {H_0}^{-\frac 12}}_{\Lc(\Hc)}
\lesssim \e \nr{\Phi \GTh \Phi^\bot}_{\Lc(\Kc^*,\Kc)}\nr{ B}_{\Lc(\Kc,\Kc^*)} \lesssim \sqrt \e.
\]
So if $\e_\bot  \in ]0,\e_\Th]$ is chosen small enough we can apply Lemma \ref{lem-res-perturb} for $\e \in ]0,\e_\bot]$: for all $z \in \C_{I,+}$ and $\e \in ]0,\e_\bot]$ the operator $(\tH - z - i\e B\Phi)$ has a bounded inverse $\Gzpe \in \Lc(\Kc^*,\Kc)$ given by
\[
\Gzpe = \GTh - i \e \GTh   \Phi^\bot B \pppg {H_0}^{-\frac 12} \G^\bot_{z}(\e) \inv   \pppg {H_0}^{\frac 12} \Phi \GTh ,
\]
where 
\[
\G^\bot_{z}(\e) =  1 + i \e \pppg {H_0}^{\frac 12} \Phi \GTh \Phi^\bot B \pppg {H_0}^{-\frac 12}.
\]
Then, as above we use \eqref{estim-GTh}, \eqref{estim-GTh-bot} and \eqref{estim-GTh-Ainv} to prove
\begin{equation} \label{estim-Gzpe}
\nr{\Gzpe} _{\Lc(\Kc^*,\Kc)} \lesssim \frac 1 {\a \e},
\end{equation}
\begin{equation} \label{estim-Gzpe-bot}
\nr{\Gzpe  \Phi^\bot}_{\Lc(\Kc^*,\Kc)} \lesssim \frac 1 {\sqrt{\a} \sqrt \e}
\end{equation}
and
\begin{equation} \label{estim-Gzpe-Ainv}
\nr{\Gzpe  \pppg {A}\inv }_{\Lc(\Hc,\Kc)}  \lesssim \frac 1 { {\a}\sqrt \e}.
\end{equation}

\stepp In order to prove the existence of $\Gzue$, it remains to apply Lemma \ref{lem-res-perturb} with $T = {(\tH- i\e B \Phi -z)}$, $P_1 = i \e B \pppg {H_0}^{-\frac 12}$ and $P_2 = \pppg {H_0}^{\frac 12} \Phi^\bot$. We have
\[
\e \nr{\pppg {H_0}^{\frac 12} \Phi^\bot \GTh  B \pppg {H_0}^{-\frac 12}}_{\Lc(\Hc)}
\lesssim \e \nr{\Phi^\bot \GTh }_{\Lc(\Kc^*,\Kc)}\nr{ B}_{\Lc(\Kc,\Kc^*)} \lesssim \sqrt \e.
\]
So if $\e_1 \in ]0,\e_\bot]$ is chosen small enough we can apply Lemma \ref{lem-res-perturb}, which proves that for $z \in \C_{I,+}$ and $\e \in ]0,\e_1]$ the operator $(\tH -i\e B - z)$ has a bounded inverse $\Gzue \in \Lc(\Kc^*,\Kc)$ given by
\[
\Gzue = \Gzpe - i \e \Gzpe   B \pppg {H_0}^{-\frac 12} \G^1_{z}(\e) \inv   \pppg {H_0}^{\frac 12} \Phi^\bot \Gzpe ,
\]
where 
\[
\G^1_{z}(\e) =  1 + i \e \pppg {H_0}^{\frac 12} \Phi^\bot \GTh B \pppg {H_0}^{-\frac 12}.
\]
Moreover we have
\begin{equation} \label{estim-Gzue}
\nr{\Gzue} _{\Lc(\Kc^*,\Kc)} \lesssim \frac 1 {\a \e}
\end{equation}
and
\begin{equation} \label{estim-Gzue-Ainv}
\nr{\Gzue  \pppg {A}\inv }_{\Lc(\Hc,\Kc)}  \lesssim \frac 1 { {\a}\sqrt \e}.
\end{equation}

\stepp For $n\in\Ii 2 N$ we have 
\begin{align*}
\nr{\pppg {H_0}^{\frac 12} \Gzue (C_n(\e) - C_1(\e))\pppg {H_0}^{-\frac 12} }_{\Lc(\Hc)}
& \leq \sum_{j=2}^n \e^j \nr{\Gzue}_{\Lc(\Kc^*,\Kc)} \nr{B_j}_{\Lc(\Kc,\Kc^*)}\\
& \lesssim \e^2 \times \frac 1 {\a \e} \times \a \lesssim \e.
\end{align*}
Thus for $\e \in ]0,\e_N]$, $\e_N$ chosen small enough, we can apply Lemma \ref{lem-res-perturb} with $T = \tH + C_1(\e) - z$, $P_1 = \big(C_n(\e) - C_1(\e)\big) \pppg {H_0}^{-\frac 12}$ and $P_2 = \pppg {H_0}^{\frac 12}$. This proves that the operator $\tH  + C_n(\e) - z$ has a bounded inverse in $\Lc(\Kc^*,\Kc)$, given by 
\begin{align*}
\Gzne
= \Gzue - \Gzue \big( C_n(\e) - C_1(\e) \big) \pppg {H_0}^{-\frac 12}  \big( 1+ \Gzue (C_n(\e)-C_1(\e))\big) \inv \pppg {H_0}^{\frac 12}  \Gzue.
\end{align*}
This proves the first statement, and the estimates are proved as above. 

\stepp Let $\e \in ]0,\e_N[$. For $\tilde \e \in \big]\frac \e 2,\e_N[$ we have 
\[
\Gzn(\tilde \e) - \Gzn(\e) = - \Gzn(\tilde \e) \big( C_n(\tilde \e) - C_n(\e) \big) \Gzn(\e).
\]
Since $C_n$ is a continuous function in $\Lc(\Kc,\Kc^*)$ and $\Gzn$ is uniformly bounded in $\Lc(\Kc^*,\Kc)$ (by a constant which depends on $\a$) on $\big] \frac \e 2,\e_N \big[$, the map $\Gzn$ is continuous in $\Lc(\Kc^*, \Kc)$. Then we divide this equality by $\tilde \e - \e$ et let $\tilde \e$ go to $\e$. We obtain that $\Gzn$ is differentiable and 
\[
\frac d {d\e} \Gzne = - \Gzne C_n'(\e) \Gzne.
\]
The derivative $C_n'(\e)$ is well-defined in $\Lc(\Kc,\Kc^*)$. In the sense of forms on $\Ec$ we can check that
\[
C_n'(\e) = [\tH + C_n(\e) - z ,A] - \frac {(-i\e)^n}{n!} [B_n,A].
\]
But the right-hand side extends to an operator in $\Lc(\Kc,\Kc^*)$, and the last statement of the proposition follows.
\end{proof}

 The following two results generalize Theorems 3.2 and 3.5 in \cite{jensen85}:

\begin{theorem} \label{thjen3.2}
Suppose $A$ is a conjugate operator for $H$ up to order $N$ on $J$ with bounds $(\a,\b,\Upsilon_N)$. Let $\d_1,\d_2 \geq 0$ be such that $\d_1 + \d_2< N-1$. Let $I$ be a compact subinterval of $\mathring J$. Then there exists $c \geq 0$ which only depends on $C_\Th$, $J$, $I$, $\d_1$, $\d_2$, $\b$ and $\Upsilon_N$ such that for all $z \in \C_{I,+}$ we have
\begin{equation*} %\label{estimrprpr}
\nr{\pppg{A}^{\d_1} \1{\R_-} (A) (H - z)\inv \1{\R_+}(A) \pppg{A}^{\d_2}} \leq \frac c {\a}.
\end{equation*}
Moreover for $\Re(z) \in \mathring J$ fixed this operator has a limit when $\Im(z) \searrow 0$. This limit defines in $\Lc(\Hc)$ a H\"older-continuous function of index $\frac {N-1-\d_1-\d_2}{N+1}$ with respect to $\Re(z)$.
\end{theorem}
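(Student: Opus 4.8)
The plan is to run the differential-equation argument used for Theorem \ref{th-mourre-form}, but now applied to the auxiliary family $\GzNe$ of Proposition \ref{prop-gnze} conjugated by \emph{exponential} weights in $A$ rather than by the operators $Q(\e)$. Concretely, for $\e\in]0,\e_N]$ I would set
\[
J_-(\e) = e^{\e A}\,\1{\R_-}(A)\,\pppg A^{\d_1},\qquad J_+(\e) = \1{\R_+}(A)\,\pppg A^{\d_2}\,e^{-\e A},
\]
which are bounded on $\Hc$ because $\pppg{a}^{\d}e^{-\e\abs{a}}$ is bounded on $\R_+$, with $\nr{J_\pm(\e)}_{\Lc(\Hc)}\lesssim\e^{-\d_\pm}$, and study $\Gamma_z(\e)=J_-(\e)\,\GzNe\,J_+(\e)\in\Lc(\Hc)$ for $z\in\C_{I,+}$. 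The role of the exponential factors is the pair of identities $A\,J_+(\e) = -J_+'(\e)$ and $J_-(\e)\,A = J_-'(\e)$ (the prime denoting $\e$-differentiation), designed precisely so that the commutator term in the evolution equation for $\GzNe$ is the one produced by differentiating the weights. Note also $J_\pm(\e)\to\1{\R_\pm}(A)\pppg A^{\d_\pm}$ as $\e\to0$, so that at $\e=0$ one formally recovers the operator in the statement.

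Differentiating $\Gamma_z(\e)$ in $\e$ and inserting Proposition \ref{prop-gnze}(iii) (the analogue at order $N$ of Lemma \ref{lem-GMAG}), the three contributions $J_-'(\e)\GzNe J_+(\e)$, $J_-(\e)[\GzNe,A]J_+(\e)$ and $J_-(\e)\GzNe J_+'(\e)$ cancel one another, leaving only the remainder term
\[
\Gamma_z'(\e) = -i\,\frac{(-i\e)^N}{N!}\,J_-(\e)\,\GzNe\,B_{N+1}\,\GzNe\,J_+(\e).
\]
Combining $\nr{\GzNe}_{\Lc(\Kc^*,\Kc)}\lesssim(\a\e)^{-1}$ from Proposition \ref{prop-gnze}(ii), $\nr{B_{N+1}}_{\Lc(\Kc,\Kc^*)}\lesssim\a$ from Definition \ref{def-conj-N-refined}, and the bounds on $J_\pm(\e)$, one gets $\nr{\Gamma_z'(\e)}_{\Lc(\Hc)}\lesssim\a^{-1}\e^{N-2-\d_1-\d_2}$; the hypothesis $\d_1+\d_2<N-1$ is exactly what makes $\e\mapsto\e^{N-2-\d_1-\d_2}$ integrable at $0$. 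Since Proposition \ref{prop-gnze}(ii) also gives $\nr{\Gamma_z(\e_N)}_{\Lc(\Hc)}\lesssim\a^{-1}$, integrating from $\e_N$ down to $\e$ yields $\nr{\Gamma_z(\e)}_{\Lc(\Hc)}\lesssim\a^{-1}$ uniformly on $]0,\e_N]$ and shows that $\Gamma_z(\e)$ converges in $\Lc(\Hc)$ to some $\Gamma_z(0)$ as $\e\searrow0$ (Lemma \ref{lem-JMP} could also be invoked, but with this exponent the direct integration already suffices).

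To identify $\Gamma_z(0)$, I would use that for a \emph{fixed} $z\in\C_+$ one has $\GzNe\to(H-z)\inv$ in $\Lc(\Kc^*,\Kc)$ as $\e\to0$ (since $C_N(\e)\to0$ in $\Lc(\Kc,\Kc^*)$ and the associated Neumann series converges for that fixed $z$), while $J_\pm(\e)\to\1{\R_\pm}(A)\pppg A^{\d_\pm}$ strongly on the dense domains $\Dom(\pppg A^{\d_\pm}\1{\R_\pm}(A))$; testing $\Gamma_z(\e)$ against vectors of those domains then identifies $\Gamma_z(0)$ with the bounded extension of $\pppg A^{\d_1}\1{\R_-}(A)(H-z)\inv\1{\R_+}(A)\pppg A^{\d_2}$, giving the uniform estimate. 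For the Hölder continuity and the boundary limit as $\Im z\searrow0$, I would differentiate $\Gamma_z(\e)$ in $z$, getting $\partial_z\Gamma_z(\e)=J_-(\e)\big(\GzNe\big)^2 J_+(\e)$ with $\nr{\partial_z\Gamma_z(\e)}\lesssim\a^{-2}\e^{-2-\d_1-\d_2}$, combine $\nr{\Gamma_z(\e)-\Gamma_{z'}(\e)}\lesssim\a^{-2}\abs{z-z'}\e^{-2-\d_1-\d_2}$ with $\nr{\Gamma_z(\e)-\Gamma_z(0)}\lesssim\a^{-1}\e^{N-1-\d_1-\d_2}$, and optimise in $\e$ (taking $\e\sim(\abs{z-z'}/\a)^{1/(N+1)}$), exactly as at the end of the proof of Theorem \ref{th-mourre-form}; this produces the exponent $\frac{N-1-\d_1-\d_2}{N+1}$.

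The step I expect to be the main obstacle is the rigorous justification of the differential identity for $\Gamma_z(\e)$ and of the cancellation: it must be carried out at the level of pairings in the scale of spaces attached to $A$, along the lines of the approximation arguments already used in the proofs of Lemma \ref{lem-GMAG} and Proposition \ref{prop-gnze}, since one cannot naively split $[\GzNe,A]=\GzNe A-A\GzNe$ (neither piece is separately well-behaved between the relevant spaces). A secondary bookkeeping point is to check that the exponential weights $J_\pm(\e)$ and their derivatives, being bounded functions of $A$ that also map $\Hc$ into $\Dom(A)$, interact with that scale so that all the operator products above genuinely define bounded operators on $\Hc$ with the stated $\a$- and $\e$-dependence, and that the limit identification does not require uniformity in $z$.
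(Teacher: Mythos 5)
Your proposal follows essentially the same route as the paper's proof: form $F_z^N(\e)=\pppg{A}^{\d_1}e^{\e A}\1{\R_-}(A)\,\GzNe\,\1{\R_+}(A)e^{-\e A}\pppg{A}^{\d_2}$, use Proposition \ref{prop-gnze}(iii) so that differentiating in $\e$ causes the commutator and weight-derivative terms to cancel and leaves only the $B_{N+1}$ remainder (your $[B_N,A]=-iB_{N+1}$ discrepancy is just a phase), bound the derivative by $\a^{-1}\e^{N-2-\d_1-\d_2}$, integrate directly since $\d_1+\d_2<N-1$ makes it integrable, and then optimise in $\e$ with $\e\sim(\abs{z-z'}/\a)^{1/(N+1)}$ for the H\"older exponent. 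The only thing you add beyond the paper is the explicit identification of $\Gamma_z(0)$ with the stated operator via strong limits on dense domains, a point the paper leaves implicit; that addition is correct and welcome.
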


\begin{proof}
Let $\e_N$ be given by Proposition \ref{prop-gnze}. For $z \in \C_{I,+}$ and $\e \in ]0,\e_N]$ we set
\[
F_z^N(\e) = \pppg{A}^{\d_1}e^{\e A}\1{\R_-} (A) G^N_z (\e)  \1{\R_+}(A) e^{- \e A} \pppg{A}^{\d_2}.
\]
According to Proposition \ref{prop-gnze}, the functional calculus and the fact that $\nr{[B_N,A]}_{\Lc(\Kc,\Kc^*)} \lesssim \a$ we have
\[
\begin{aligned}
\nr{\frac  d {d\e} F_z^N(\e)}
& = \frac {\e^N}{N!} \nr{\pppg{A}^{\d_1} e^{\e A} \1{\R_-}(A)  \GzNe [B_N,A] \GzNe  \1{\R_+}(A) e^{- \e A} \pppg{A}^{\d_2}}\\
& \lesssim \e^{-\d_1} \times  \a\inv \e^{N-2}\times \e ^{-\d_2} = \frac {\e ^{N - \d_1 - \d_2 -2}}\a.
\end{aligned}
\]
Since $N - \d_1 - \d_2 -2 > -1$, this proves that $F_z^N(\e)$ is uniformly bounded (we do not have to use Lemma \ref{lem-JMP} here).
Now let $z,z' \in \C_{I,+}$ and $\e\in]0,\e_0]$. The previous estimates give
\[
\nr{F^N_z(\e) - F^N_{z}(0)} \leq \frac c {\a} \e ^{N-1 - \d_1-\d_2} \quad \text{and} \quad \nr{F^N_z(\e) - F^N_{z'}(\e)} \leq \frac c {\a^2} \e ^{-(\d_1+\d_2+2)} \abs{z-z'}.
\]
We get the second statement as we did for Theorem \ref{th-mourre-form}, taking $\e = \a ^{-\frac 1{N+1}} \abs{z-z'}^{\frac 1 {N+1}}$.
\end{proof}

\begin{theorem} \label{thjen3.5}
Suppose $A$ is a conjugate operator for $H$ up to order $N$ on $J$ with bounds $(\a,\b,\Upsilon_N)$. Let $\d \in \left] \frac 12 , N \right[$. Then there exists $c \geq 0$ which only depends on $C_\Th$, $J$, $I$, $\d_1$, $\d_2$, $\b$ and $\Upsilon_N$ such that for all $z \in \C_{I,+}$ we have
\begin{equation*} 
\nr{\pppg{A}^{-\d} (H - z)\inv \1{\R_+}(A) \pppg{A}^{\d-1}}_{\Lc(\Hc)}  \leq \frac c {\a},
\end{equation*}
and for $\Re(z) \in \mathring J$ fixed this operator has a limit when $\Im(z) \searrow 0$. This limit defines in $\Lc(\Hc)$ a H\"older-continuous function with respect to $\Re(z)$. Moreover we have similar results for the operator
\begin{equation*} 
 \pppg{A} ^{\d-1}\1 {\R_-}(A)(H -  z) \inv \pppg{A}^{-\d}.
\end{equation*}
\end{theorem}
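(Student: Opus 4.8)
The plan is to follow the scheme of Jensen's Theorem 3.5, reducing the one-sided estimate to Theorem~\ref{thjen3.2} and to the regularized resolvent $G_z^N(\e)$ of Proposition~\ref{prop-gnze}. By duality (compare Remark~\ref{rem-adjoint}) the estimate for $\pppg A^{\d-1}\1{\R_-}(A)(H-z)\inv\pppg A^{-\d}$ is the adjoint of the one for $\pppg A^{-\d}(H^*-\bar z)\inv\1{\R_+}(A)\pppg A^{\d-1}$, so it suffices to bound $\pppg A^{-\d}(H-z)\inv\1{\R_+}(A)\pppg A^{\d-1}$. Inserting $1=\1{\R_-}(A)+\1{\R_+}(A)$ immediately to the left of $(H-z)\inv$, the cross term $\pppg A^{-\d}\1{\R_-}(A)(H-z)\inv\1{\R_+}(A)\pppg A^{\d-1}$ is already $\le c/\a$ by Theorem~\ref{thjen3.2}: indeed $\pppg A^{-\d}$ commutes with $\1{\R_-}(A)$ and is a contraction, and either $\d\le1$ (so $\pppg A^{\d-1}$ is bounded and one uses Theorem~\ref{thjen3.2} with $\d_1=\d_2=0<N-1$) or $\d>1$ (so one uses it with $\d_1=0$, $\d_2=\d-1<N-1$, using $\d<N$). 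There remains the ``same sign'' term $\pppg A^{-\d}\1{\R_+}(A)(H-z)\inv\1{\R_+}(A)\pppg A^{\d-1}$, which carries the whole difficulty.

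For this term I would introduce, for $\e\in\left]0,\e_N\right]$,
\[
F_z(\e)=\pppg A^{-\d}\,\1{\R_+}(A)\,e^{-\e A}\,G_z^N(\e)\,e^{-\e A}\,\1{\R_+}(A)\,\pppg A^{\d-1},
\]
where on $\Ran\1{\R_+}(A)$ the factors $\pppg A^{-\d}e^{-\e A}$ and $e^{-\e A}$ are bounded uniformly in $\e$ while $e^{-\e A}\1{\R_+}(A)\pppg A^{\d-1}$ has norm $O\bigl(\e^{-(\d-1)_+}\bigr)$; with Proposition~\ref{prop-gnze}(ii) this gives $\|F_z(\e)\|\lesssim\a\inv\e^{-\d-\frac12}$, and $F_z(\e)$ converges as $\e\searrow0$ (strongly on a suitable dense subspace, as in Theorems~\ref{th-mourre-form} and~\ref{thjen3.2}) to the operator to be estimated. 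Differentiating $F_z(\e)$ with the help of Proposition~\ref{prop-gnze}(iii), the transport contributions --- which cancel completely in the proof of Theorem~\ref{thjen3.2} thanks to the opposite-sign conjugation $e^{\e A}\cdots e^{-\e A}$ --- here cancel only in part: because $\1{\R_+}(A)$ sits on both sides the two exponentials are forced to have the same sign, so a term of the form $\pppg A^{-\d}\1{\R_+}(A)\,Ae^{-\e A}\,G_z^N(\e)\,e^{-\e A}\,\1{\R_+}(A)\,\pppg A^{\d-1}$ survives (effectively doubled), together with the $B_{N+1}$-term carrying an $\e^N$ prefactor. I would then bound these using the refined (localized and quadratic) estimates for $G_z^N(\e)$ from the proofs of Propositions~\ref{prop-gnze} and~\ref{prop-gze}, the smoothing bound $\|G_z^N(\e)\pppg A\inv\|\lesssim(\a\sqrt\e)\inv$, the inequality $(1-\d)_+<\tfrac12$ (valid since $\d>\tfrac12$) and the room left by $\d<N$ in the $\e$-powers, aiming at $\|F_z'(\e)\|\lesssim\e^{-\g_2}\bigl(1+\|F_z(\e)\|^{\g_1}\bigr)$ with $\g_2<1$, $\g_1\le1$; Lemma~\ref{lem-JMP} then gives $\|F_z(\e)\|\lesssim1/\a$, and $\e\to0$ yields the claimed bound.

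The limiting absorption principle and H\"older continuity would then follow as for Theorems~\ref{th-mourre-form} and~\ref{thjen3.2}: estimate $\partial_z F_z(\e)=\pppg A^{-\d}\1{\R_+}(A)e^{-\e A}G_z^N(\e)^2e^{-\e A}\1{\R_+}(A)\pppg A^{\d-1}$ via $\|G_z^N(\e)\|\lesssim(\a\e)\inv$, combine it with the $\e$-integrated derivative estimate to control $\|F_z(\e)-F_z(0)\|$, and optimize $\e$ against $|z-z'|$; the statement for $\pppg A^{\d-1}\1{\R_-}(A)(H-z)\inv\pppg A^{-\d}$ comes once more by duality.

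I expect the real obstacle to be the surviving transport term in $F_z'(\e)$: being only \emph{linear} in $G_z^N(\e)$, it cannot be absorbed into a factor $\|F_z(\e)\|^{1/2}$ in the way the quadratic estimates handle the other terms, so extracting the correct power of $\e$ --- and seeing exactly where the gain from $\d>\tfrac12$ and the room from $\d<N$ are spent, perhaps after a further dyadic decomposition of $\1{\R_+}(A)$ that recycles the decay built into Theorem~\ref{thjen3.2} --- is the delicate point. A pervasive secondary nuisance is the bookkeeping of the spaces $\Hc$, $\Kc$, $\Kc^*$ and of functions of $A$ acting on $\Kc$.
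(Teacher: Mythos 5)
The paper gives no argument for Theorem~\ref{thjen3.5}: its proof is a one--sentence pointer to Jensen \cite{jensen85} and Mourre \cite{mourre83} (plus the author's thesis), together with the remark that the form--perturbation setting raises nothing new. Your reconstruction is therefore being judged on its own, and it has a genuine gap --- one you in fact flag yourself. The duality reduction and the treatment of the cross block $\pppg A^{-\d}\1{\R_-}(A)(H-z)\inv\1{\R_+}(A)\pppg A^{\d-1}$ via Theorem~\ref{thjen3.2} are fine, and your diagnosis is correct: putting $e^{-\e A}\1{\R_+}(A)$ on \emph{both} sides of $G_z^N(\e)$ destroys the transport cancellation that kept $\partial_\e F_z^N(\e)$ small in the proof of Theorem~\ref{thjen3.2}, and leaves behind a term of the form $-2\,\pppg A^{-\d}\1{\R_+}(A)e^{-\e A}A\,G_z^N(\e)\,e^{-\e A}\1{\R_+}(A)\pppg A^{\d-1}$. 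That is where your proposal stops being a proof.

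You give no bound for this surviving term, only the hope that the quadratic estimates, the room $\d<N$, or a dyadic decomposition of $\1{\R_+}(A)$ might close the argument. They do not, at least not as described. With the available tools --- $\|G_z^N(\e)\|\lesssim(\a\e)\inv$, $\|G_z^N(\e)\pppg A\inv\|\lesssim(\a\sqrt\e)\inv$ from Proposition~\ref{prop-gnze}, and the $\sup$ bounds $\|\pppg A^{-\d}\1{\R_+}(A)Ae^{-\e A}\|\lesssim\e^{-(1-\d)_+}$, $\|e^{-\e A}\1{\R_+}(A)\pppg A^{\s}\|\lesssim\e^{-\s_+}$ --- every distribution of factors over the transport term gives at best $\a\inv\e^{-1}$ (attained only at $\d=1$; for $\d\in]\tfrac12,1[$ one gets $\a\inv\e^{\d-2}$, for $\d>1$ one gets $\a\inv\e^{-\d}$), so the resulting $\g_2\ge1$ is not admissible in Lemma~\ref{lem-JMP}. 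The quadratic estimate of Proposition~\ref{prop-estim-quad-form} does not save this either, because it bounds $\|\Phi G_z^N(\e)Q\|$ by $\|Q^*G_z^N(\e)Q\|^{1/2}$ with the \emph{same} $Q$ on both sides, whereas here the left and right weights are different and one of them carries the unbounded factor $A$. (A smaller slip: the a priori bound $\|F_z(\e)\|\lesssim\a\inv\e^{-\d-1/2}$ you quote is not what the crude estimates give; they give $\a\inv\e^{-\max(\d,1)}$.) In short, the decomposition $1=\1{\R_-}(A)+\1{\R_+}(A)$ plus a same--sign analogue of Theorem~\ref{thjen3.2} does not reduce Theorem~\ref{thjen3.5} to things already proved: constructing an $\e$--dependent family whose derivative stays integrable in the one--sided situation --- essentially the content of \cite{mourre83} that \cite{jensen85} builds on --- is precisely the missing step, and it is the one you yourself call ``the delicate point.''
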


\begin{proof} 
We follow the proof given in \cite{jensen85}. It relies itself on the results of \cite{mourre83}. We also refer to \cite{these} for a proof in the dissipative case (perturbation by a dissipative operator). The case of a dissipative perturbation in the sense of forms does not rise new difficulties, so we omit the details.
\end{proof}

Now that we have Theorems \ref{th-mourre-form}, \ref{thjen3.2} and \ref{thjen3.5} we can follow the idea developped in \cite[Sec. 5]{boucletr14}. The purpose is not only to prove uniform estimates for the powers of the resolvent, but also to allow inserted factors. This is motivated by the wave equation. Indeed, the derivatives of the corresponding resolvent are not its powers in this case (see Example \ref{ex-wave} below).\\

Let $n \in \Ii 1 N$. We consider $\Phi_0 \in \Lc(\Kc,\Hc)$, $\Phi_1,\dots,\Phi_{n-1} \in \Lc(\Kc,\Kc^*)$ and $\Phi_n \in \Lc(\Hc,\Kc^*)$. We assume (inductively) on $m \in \Ii 1 N$ that the operator 
\[
\ad_{iA}^m (\Phi_0) := [\ad_{iA}^{m-1} (\Phi_0),iA]
\]
(with $\ad_{iA}^0 (\Phi_0) = \Phi_0$), at least defined as an operator in $\Lc(\Ec,\Ec^*)$, can be extended to an operator in $\Lc(\Kc,\Hc)$. We assume similarly that the commutators $\ad_{iA}^m (\Phi_j)$ for $m \in \Ii 1 N$ and $j \in \Ii 1 {n-1}$ extend to operators in $\Lc(\Kc,\Kc^*)$ and finally that the commutators $\ad_{iA}^m (\Phi_n)$ for $m \in \Ii 1 N$ extend to operators in $\Lc(\Hc,\Kc^*)$. Then for $j \in \Ii 1 {n-1}$ we set
\[
\nr{\Phi_j}_{\Cc_N(A,\Kc,\Kc^*)} = \sum_{m=0}^N \nr{\ad_{iA}^m (\Phi_j)}_{\Lc(\Kc,\Kc^*)}.
\]
We similarly define $\nr{\Phi_0}_{\Cc_N(A,\Kc,\Hc)}$ and $\nr{\Phi_n}_{\Cc_N(A,\Hc,\Kc^*)}$, and then
\[
\nr{(\Phi_0,\dots,\Phi_n)}_{\Cc_N^n} = \nr{\Phi_0}_{\Cc_N(A,\Kc,\Hc)} \nr{\Phi_n}_{\Cc_N(A,\Hc,\Kc^*)}\prod_{j=1}^{n-1} \nr{\Phi_j}_{\Cc_N(A,\Kc,\Kc^*)}.
\]
For $z \in \C_+$ we set  
\begin{equation} \label{def-Rc}
\Rc_n(z) = \Phi_0 (H-z)\inv  \Phi_1 (H-z)\inv \dots \Phi_{n-1} (H-z)\inv \Phi_n.
\end{equation}
The statement is the following:

\begin{theorem} \label{th-mourre-insert}
Suppose that the self-adjoint operator $A$ is conjugate to the maximal dissipative operator $H$ on $J$ up to order $N$ with bounds $(\a,\b,\Upsilon_N)$. Let $I \subset \mathring J$ be a compact interval. Let $\d \in \big] n- \frac 12,N\big[$ and $\d_1,\d_2 \geq 0$ such that $\d_1 + \d_2 < N -n$. Then there exists $c \geq 0$ such that
\begin{equation*}
\nr{ \pppg{A}^{-\d} \Rc_n(z) \pppg{A_\l}^{-\d}} \leq \frac c {\a^n}  \nr{(\Phi_0,\dots,\Phi_n)}_{\Cc_N^n},
\end{equation*}
\begin{equation*}
\nr{ \pppg{A}^{\d-n} \1{\R_-}(A) \Rc_n(z)   \pppg{A}^{-\d}} \leq \frac c {\a^n} \nr{(\Phi_0,\dots,\Phi_n)}_{\Cc_N^n},
\end{equation*}
\begin{equation*}
\nr{ \pppg{A}^{-\d} \Rc_n(z)  \1{\R_+}(A) \pppg{A}^{\d-n}} \leq \frac c {\a^n} \nr{(\Phi_0,\dots,\Phi_n)}_{\Cc_N^n}
\end{equation*}
and
\begin{equation*}
\nr{ \pppg{A}^{\d_1} \1{\R_-} (A) \Rc_n(z)  \1{\R_+} (A) \pppg{A}^{\d_2}} \leq \frac c {\a^n}  \nr{(\Phi_0,\dots,\Phi_n)}_{\Cc_N^n}.
\end{equation*}
\end{theorem}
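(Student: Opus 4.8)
The plan is to argue by induction on $n$, following the scheme of \cite[Sec.~5]{boucletr14}, and to prove the four estimates simultaneously. The point is that, once Theorems~\ref{th-mourre-form}, \ref{thjen3.2} and \ref{thjen3.5} and Proposition~\ref{prop-gnze} are available in the present setting, the passage from a dissipative perturbation in the sense of operators to one in the sense of forms brings nothing new: one only reads the resolvents in $\Lc(\Kc^*,\Kc)$ produced by Lax--Milgram, and the operators $\Gzne$ of Proposition~\ref{prop-gnze}, in place of the corresponding operators in $\Lc(\Hc,\Dom(H_0))$ used in \cite{boucletr14}. For $n=1$ the four estimates are exactly Theorems~\ref{th-mourre-form}, \ref{thjen3.2} and \ref{thjen3.5}, once the inserted factors $\Phi_0,\Phi_1$ have been absorbed, which is Step~1 below.

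\textbf{Step 1: absorbing the inserted factors.} Each $\Phi_j$ carries $\Cc_N(A)$-regularity, so the weights $\pppg A^{\pm s}$ and the sharp cutoffs $\1{\R_\pm}(A)$ can be moved past it. Using an integral representation of $\pppg A^{-s}$ one obtains an expansion $\pppg A^{-s}\Phi_j=\sum_{m=0}^{M}c_m\big(\ad_{iA}^m\Phi_j\big)\pppg A^{-s-m}+r_M$, with $M\le N$ and $\nr{r_M}$ controlled by $\nr{\ad_{iA}^{M+1}\Phi_j}$, and likewise $\1{\R_\pm}(A)$ commutes with $\Phi_j$ up to terms built from $\ad_{iA}^m\Phi_j$. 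Inserting these identities into \eqref{def-Rc} rewrites each of the four operators as a finite sum of operators of the same shape in which every $\Phi_j$ has been replaced by one of its iterated commutators and then absorbed into a constant; we may therefore assume $\Phi_0=\dots=\Phi_n=1$ from now on, at the cost of the factor $\nr{(\Phi_0,\dots,\Phi_n)}_{\Cc_N^n}$.

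\textbf{Step 2: the inductive step.} Assume the four estimates for $1,\dots,n-1$. Insert $1=\1{\R_-}(A)+\1{\R_+}(A)$ on both sides of each factor $(H-z)\inv$ and of the two extreme weights, expanding the left-hand side into a finite sum of terms $W_0\,P_0\,(H-z)\inv P_1\,(H-z)\inv\cdots P_{n-1}\,(H-z)\inv P_n\,W_n$ with $P_i\in\{\1{\R_-}(A),\1{\R_+}(A)\}$ and $W_0,W_n$ powers of $\pppg A$. If two consecutive cutoffs flanking some $(H-z)\inv$ have opposite signs with $\1{\R_-}(A)$ on the left, that block — carrying as much of the weight budget as needed — is estimated by Theorem~\ref{thjen3.2}, and splitting the product there leaves two strictly shorter products of the same type, with one-sided or negative-order weights at the cut, controlled by the induction hypothesis and by Theorems~\ref{thjen3.5} and \ref{th-mourre-form}; the weight orders introduced at the cut are taken just above $\frac12$, which is admissible because $\d>n-\frac12$ and $\d_1+\d_2<N-n$ leave exactly the needed room, each cut consuming one unit of the order-$N$ budget. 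If all the cutoffs carry the same sign, one rewrites an interior $\1{\R_\pm}(A)$ as $1-\1{\R_\mp}(A)$, producing either a term with one sign change (previous case) or one with fewer cutoffs, ultimately the genuine weighted power $\pppg A^{-\d}(H-z)^{-n}\pppg A^{-\d}$, which is obtained directly from Proposition~\ref{prop-gnze} by the deformation/ODE argument (Lemma~\ref{lem-JMP}) already used for Theorem~\ref{th-mourre-form}. Multiplying the $n$ single-resolvent bounds gives the power $\a^{-n}$. The existence of the boundary values as $\Im(z)\searrow0$ and the Hölder continuity in $\Re(z)$ are inherited from those of the building blocks, by the same interpolation argument (Lemma~\ref{lem-JMP}) as in Theorems~\ref{th-mourre-form} and \ref{thjen3.2}, with $\e$ chosen as a suitable power of $\abs{z-z'}/\a$.

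\textbf{Expected main obstacle.} The delicate part is entirely the combinatorics of Step~2: one must check that in \emph{every} sign configuration either an opposite-sign block (Theorem~\ref{thjen3.2}) is available, or the configuration reduces via $\1{\R_\pm}(A)=1-\1{\R_\mp}(A)$ to a lower-order product or to the power estimate, and that at each cut the remaining weight suffices for both resulting factors — this is exactly where the arithmetic of the hypotheses $\d\in\,]n-\frac12,N[$ and $\d_1+\d_2<N-n$ is used, and where the $\a^{-n}$ dependence and the $\Cc_N^n$-norm must be tracked through all the commutators and cuts. Since none of this goes beyond \cite[Sec.~5]{boucletr14} (see also \cite{these} for the dissipative analogue), it would be legitimate to present the proof by reducing to that reference and indicating only the form-setting modifications.
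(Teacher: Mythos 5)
Your overall architecture --- start from the single-resolvent estimates of Theorems \ref{th-mourre-form}, \ref{thjen3.2} and \ref{thjen3.5}, use the $\Cc_N$-regularity of the inserted factors to move weights and cutoffs past them, and glue by inserting $1=\1{\R_-}(A)+\1{\R_+}(A)$ --- is the same as the paper's, which reduces to the proof of Theorem 5.14 in \cite{boucletr14} (reduction to bounded $\Phi_j$, weighted estimates for single-resolvent blocks $\Phi_j(H-z)\inv\Phi_k$, then the abstract gluing Lemma 5.4 there). But your Step 1 contains a genuine gap: the claim that one may assume $\Phi_0=\dots=\Phi_n=1$ cannot be justified at that stage. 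In $\Rc_n(z)$ the interior factors $\Phi_1,\dots,\Phi_{n-1}$ are not adjacent to any weight or cutoff --- these sit only at the two ends --- so there is nothing to expand $\pppg A^{-s}\Phi_j$ against; and you cannot discard an interior $\Phi_j$ by its norm either, since splitting the product there leaves pieces such as $\pppg A^{-\d}(H-z)\inv$ with no weight on one side, which are not uniformly bounded as $\Im(z)\searrow 0$. The commutation with the $\Phi_j$ can only be exploited \emph{after} the cutoffs and weights of your Step 2 have been inserted, i.e. inside each block; what one then proves are precisely the weighted estimates for $\Phi_j(H-z)\inv\Phi_k$ of the paper's route, and the inserted factors are never removed. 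As ordered (Step 1 before Step 2) the reduction is simply false: if it were available, the theorem would be an immediate consequence of the pure power estimates together with submultiplicativity, which it is not.

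Two further points. First, the paper begins with the identity $(H-z)\inv=(H-i)\inv+(z-i)(H-i)^{-2}+(z-i)^2(H-i)\inv(H-z)\inv(H-i)\inv$ in order to replace the $\Phi_j$, a priori in $\Lc(\Kc,\Kc^*)$ (resp. $\Lc(\Kc,\Hc)$, $\Lc(\Hc,\Kc^*)$), by operators bounded on $\Hc$ together with their commutators with $A$; without this step your expansions $\pppg A^{-s}\Phi_j=\sum_m c_m(\ad_{iA}^m\Phi_j)\pppg A^{-s-m}+r_M$ do not even compose correctly, since $\pppg A^{\pm s}$ and $\1{\R_\pm}(A)$ act on $\Hc$ while $\Phi_j$ maps $\Kc$ to $\Kc^*$. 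Second, the assertion that $\1{\R_\pm}(A)$ ``commutes with $\Phi_j$ up to terms built from $\ad_{iA}^m\Phi_j$'' is not correct as stated: sharp cutoffs are not smooth functions of $A$, and the only usable statement is of the weighted form $\pppg A^{\d_1}\1{\R_-}(A)\Phi_j\1{\R_+}(A)\pppg A^{\d_2}$ bounded for $\d_1+\d_2$ strictly below the number of available commutators --- exactly the bookkeeping carried out in \cite{boucletr14}, which is why the paper keeps the factors attached to the resolvents and defers to that reference rather than eliminating them.
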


\begin{proof}
We can follow the proof of the analogous Theorem 5.14 in \cite{boucletr14}. We only briefly recall the strategy. With the identity
\[
(H-z)\inv = (H-i)\inv + (z-i) (H-i)^{-2} + (z-i)^2 (H-i)\inv (H-z)\inv (H-i)\inv,
\]
we see that we can assume without loss of generality that the operators $\Phi_j$ and their commutators with $A$ are in $\Lc(\Hc)$. Then the idea is to start from the estimates for a single resolvent (see Theorems \ref{th-mourre-form}, \ref{thjen3.2} and \ref{thjen3.5}), to prove analog estimates with $(H-z)\inv$ replaced by an operator of the form $\Phi_j (H-z)\inv \Phi_k$ (for this we use the commutation properties between $\Phi_j$ and $A$), and finally we use Lemma 5.4 in \cite{boucletr14} to obtain the multiple resolvent estimates with inserted factors. We omit the details and refer to the proof of Theorem 5.14 in \cite{boucletr14}.
\end{proof}

\begin{remark}
With the same idea we could even prove uniform estimates for an operator of the form 
\[
\Rc(z) = \Phi_0 (H_1-z)\inv  \Phi_1 (H_2-z)\inv \dots \Phi_{n-1} (H_n-z)\inv \Phi_n,
\]
where $H_1,\dots,H_n$ are different maximal dissipative operators of the form dicussed in Section \ref{sec-def} with uniform constant $C_\Th$ in \eqref{qI-qO-bounded} and with the same form domain $\Kc$, under the assumption that $A$ is conjugated to $H_k$ on $J$ with bounds $(\a_k,\b,\Upsilon_N)$ for all $k \in \Ii 1 n$. Then the quotient $\a^n$ is replaced by $\a_1\dots\a_n$ in the estimates of the theorem.
\end{remark}

\begin{example} \label{ex-wave}
We consider the wave equation \eqref{eq-wave} on the half-space \eqref{def-half-space}. Assume that $w_0 = 0$ on $\partial \O$. Let $w$ be the solution of \eqref{eq-wave}. For $\m > 0$ we set $w_\m(t) = \1 {\R_+}(t) e^{-t\m} w(t)$. Then the inverse Fourier transform of $w_\m$,
\[
\check w_\m(\t) = \int_\R e^{it\t} w_\m(t) \, dt = \int_0^{+\infty} e^{it(\t+i\m)} w(t) \, dt,
\]
is solution of the problem 
\[
\begin{cases}
(-\D - z^2) \check w_\m (\t) = -izw_0 + w_1 & \text{on } \O,\\
\partial_\n \check w_\m(\t) = iza \check w_\m(\t) & \text{on } \partial \O,
\end{cases}
\]
where $z = \t +i\m$.  In other words, we have 
\[
\check w_\m(\t) = R(z)( -izw_0 + w_1) \quad \text{where} \quad R(z) =(H_{az} - z^2)\inv  .
\]
In order to study the properties of $\check w_\m(\t)$ and hence those of $w(t)$ we have to prove uniform resolvent estimates for the derivative of $R(z)$ when $\Im (z) \searrow 0$ (see for instance Theorem 1.2 in \cite{boucletr14} for the wave equation on $\R^d$). We can check that for $z \in \C_+$ we have 
\[
R'(z) = R(z) (i \Th + 2z) R(z),
\]
where $\Th \in \Lc (H^1(\O),H^1(\O))$ is the operator corresponding to the imaginary part $q_\Th$ of $q_a$ (see \eqref{def-qa}). Following Proposition 5.9 in \cite{boucletr14} we can check that for $n \in \N^*$ the derivative $R^{(n)}(z)$ is a linear combination of terms of the form 
\[
z^k R(z) \Th^{j_1} R(z) \Th^{j_2} R(z) \dots \Th^{j_m} R(z),
\]
where $m \in \Ii 0 n$ (there are $m+1$ factors $R(z)$), $k\in\N$, $j_1,\dots,j_m \in \{0,1\}$, $\Th^1 = \Th$, $\Th^0 = \Id$ and $n = 2m - k - (j_1 + \dots + j_m)$. The difference is that $\Th$ is not a bounded operator on $L^2$. However, we have checked the commutation properties between $\Th$ and $A$ in the proof of Proposition \ref{prop-guide}, so with Theorem \ref{th-mourre-insert} we can prove the following result:
\end{example}

\begin{proposition}
Let $n\in\N$ and assume that \eqref{hyp-a-A} holds for $N\geq n$. Let $\d > n + \frac 12$ and let $I$ be a compact subset of $\R_+^*$. Then there exists $C \geq 0$ such that for all $z \in \C_{I,+}$ we have 
\[
\nr{\pppg x^{-\d} R^{(n)}(z) \pppg x^{-\d}}_{\Lc(L^2)} \leq C.
\]
\end{proposition}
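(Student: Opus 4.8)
The plan is to expand $R^{(n)}(z)$ as in Example \ref{ex-wave} into a sum of multiple-resolvent expressions, to bound each of them by means of Theorem \ref{th-mourre-insert}, and finally to convert the $\pppg A$-weighted bounds so obtained into $\pppg x$-weighted ones as in \cite[Sec.~5]{boucletr14}. I would make two preliminary reductions. Since $R^{(n)}$ is holomorphic on $\C_+$ and, for $z=\t+i\m$ with $\t\in I$ and $\m\geq\e_0>0$, the point $z^2$ lies at distance $\geq 2\e_0\inf(I)$ from $\R_+$, so that $\nr{R(z)}_{\Lc(L^2)}\leq(2\e_0\inf(I))\inv$, a Cauchy estimate bounds $R^{(n)}(z)$ uniformly on $\{\Im z\geq\e_0\}$ (and the weights only improve this); hence it suffices to treat $z\in\C_{I,+}$ with $\m=\Im z$ small, $\e_0$ to be fixed below. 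Moreover, writing $\pppg x^{-\d}=\pppg x^{-(\d-\d_0)}\pppg x^{-\d_0}$ with $\pppg x^{-(\d-\d_0)}$ bounded, it is enough to prove the estimate for one exponent $\d_0\in\,]n+\frac12,n+1[$, which we now fix. Then, by Example \ref{ex-wave}, $R^{(n)}(z)$ is a finite linear combination, with coefficients $z^k$ ($0\leq k\leq 2n$, hence bounded on $\C_{I,+}$), of operators of the form $R(z)\,\Phi_1\,R(z)\cdots\Phi_{p-1}\,R(z)$ with $p\in\Ii 1 {n+1}$ factors $R(z)$ and $\Phi_\ell\in\{\Id,\Th\}$, so it suffices to bound each such term, flanked by $\pppg x^{-\d_0}$, uniformly in $z$.

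The key step is to fit $H_{az}$ into the setting of Section \ref{sec-def} and to check that $A$ is conjugate to it uniformly in $z$. For $z=\t+i\m\in\C_{I,+}$ with $\m$ small, the form of $H_{az}$ is $q_{az}=q_0-izq_\Th=(q_0+\m q_\Th)-i\t q_\Th$, so its form-domain is $\Kc=H^1(\O)$ (independently of $z$), its self-adjoint part is the Robin Laplacian $H_0^\m$ with boundary condition $\partial_\n u=\m a u$, its imaginary part is the non-negative form $\t q_\Th$ (recall $\t=\Re z>0$ since $I\subset\R_+^*$), and $R(z)=(H_{az}-z^2)\inv$ with $z^2\in\C_{I',+}$ for some fixed compact $I'\subset\R_+^*$. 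I claim that the dilation generator $A$ of \eqref{def-A} is conjugate to $H_{az}$, up to order $N$, on a fixed compact interval $J'$ with $I'\Subset J'\subset\R_+^*$, with bounds $(\a,0,\Upsilon_N)$ independent of such $z$. Indeed, $e^{-itA}$ preserves $\Kc$; the commutators $B_0^\m=[\tilde H_0^\m,iA]$, $B^{(z)}=[\tilde H_{az},iA]$ and their iterates up to order $N$ arise from the same computation as in the proof of Proposition \ref{prop-guide}, the extra Robin and dissipative boundary terms merely adding boundary forms built from $(x\cdot\nabla)^j a$ for $j\leq N$, which lie in $\Lc(\Kc,\Kc^*)$ with norms controlled by the constants $c_\g$ of \eqref{hyp-a-A}; and the Mourre estimate \eqref{hyp-mourre} holds with $\b=0$, because the identity $B_0^\m=2\tilde H_0^\m+\m\big(-2\Th+[\Th,iA]\big)$ in $\Lc(\Kc,\Kc^*)$ yields
\[
\1 {J'}(H_0^\m)\,B_0^\m\,\1 {J'}(H_0^\m)\;\geq\;2\,\1 {J'}(H_0^\m)\,H_0^\m\,\1 {J'}(H_0^\m)-C\m\,\1 {J'}(H_0^\m)\;\geq\;\big(2\inf(J')-C\m\big)\,\1 {J'}(H_0^\m),
\]
which is $\geq\a\,\1 {J'}(H_0^\m)$ with $\a>0$ once $\m$ (hence $\e_0$) is small enough; here $C$ is finite and uniform in such $z$ because $\Th$ and $[\Th,iA]$ lie in $\Lc(\Kc,\Kc^*)$ and, for $\m$ small, $\1 {J'}(H_0^\m)$ is bounded $\Hc\to\Kc$ and $\Kc^*\to\Hc$ uniformly.

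Granting this, I would apply Theorem \ref{th-mourre-insert} to the maximal dissipative operator $H_{az}$, with $\Phi_0=\Phi_p=\Id$ (bounded $\Kc\to\Hc$, resp.\ $\Hc\to\Kc^*$, with vanishing commutators with $A$) and the inner factors $\Phi_\ell\in\{\Id,\Th\}$, whose $\Cc_N$-norms are finite and $z$-independent by the commutator computations of Proposition \ref{prop-guide} and \eqref{hyp-a-A}. With the theorem's ``$n$'' taken equal to the number $p$ of resolvent factors ($p\leq n+1$), the condition $\d_0>p-\frac12$ holds because $\d_0>n+\frac12$, and the remaining conditions on $\d_0$, on $\d_0-p$ and on $N$ are satisfied for $N$ large enough (which \eqref{hyp-a-A} provides); the theorem then gives
\[
\nr{\pppg A^{-\d_0}\,R(z)\Phi_1 R(z)\cdots\Phi_{p-1}R(z)\,\pppg A^{-\d_0}}_{\Lc(\Hc)}\;\leq\;\frac{c}{\a^{p}}\,\nr{(\Id,\Phi_1,\dots,\Phi_{p-1},\Id)}_{\Cc_N^p}\;\leq\;C
\]
uniformly in $z$, together with the localised variants carrying $\1 {\R_-}(A)$ and/or $\1 {\R_+}(A)$. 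It then remains to replace the $\pppg A$-weights by $\pppg x$-weights: for the dilation generator this is the standard argument carried out in \cite[Sec.~5]{boucletr14}, which rests on the facts that $A$ agrees with $-i\,x\cdot\nabla$ up to a bounded operator, that $R(z)$ has the smoothing properties of an elliptic resolvent, and that the localised estimates with $\1 {\R_\pm}(A)$ just obtained make up for the absence of a direct comparison between $\pppg x$ and $\pppg A$. Summing the finitely many terms yields the uniform bound for $\pppg x^{-\d_0}R^{(n)}(z)\pppg x^{-\d_0}$, and hence the statement.

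The part I expect to be the main obstacle is the claim of the second paragraph — the uniformity in $z$ of the conjugacy of $A$ to $H_{az}$ as $\Im z\searrow 0$, above all the validity of the Mourre estimate \eqref{hyp-mourre} for the $z$-dependent self-adjoint part $H_0^\m$ (which is why the reduction to small $\Im z$ is essential), together with the uniform control of the multiple-commutator norms entering $\Upsilon_N$. Once these are in hand, the remainder is routine bookkeeping together with the weight conversion already available in \cite{boucletr14}.
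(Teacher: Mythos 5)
Your proposal is correct and follows essentially the route the paper itself intends (it gives no written proof beyond Example \ref{ex-wave}): expand $R^{(n)}(z)$ into terms $z^k R(z)\Th^{j_1}\cdots\Th^{j_m}R(z)$, apply Theorem \ref{th-mourre-insert} with inserted factors $\Id$ or $\Th$ using the commutator computations of Proposition \ref{prop-guide}, and convert $\pppg A$-weights into $\pppg x$-weights as in \cite{boucletr14}; your added care about uniformity in $z$ — the reduction to small $\Im z$, the $z$-dependent Robin self-adjoint part $H_0^\m$ and the perturbed Mourre estimate with uniform bounds $(\a,0,\Upsilon_N)$ — is exactly the point the paper leaves implicit via its ``refined'' Definitions \ref{def-conj-refined} and \ref{def-conj-N-refined}. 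The only caveat is a shared off-by-one: the worst term $z^n R(z)^{n+1}$ requires $\d_0<N$ with $\d_0>n+\frac12$, hence conjugacy up to order $N\geq n+1$, so the hypothesis ``\eqref{hyp-a-A} holds for $N\geq n$'' must be read as providing commutators of sufficiently high order, as your ``$N$ large enough'' correctly demands.
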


\section{Absolutely continuous spectrum} \label{sec-Hac}

\newcommand{\dilHc}{{\hat \Hc}}
\newcommand{\dilH}{{\hat H}}
\newcommand{\wh}{{\sqrt a}}
\newcommand{\Ecc}{{L^2(\partial \O)}}

In this section we discuss the properties of the absolutely continuous subspace for a dissipative operator. We recall from \cite{davies78} the following definition:

\begin{definition} \label{def-Hac}
Let $H$ be a maximal dissipative operator on a Hilbert space $\Hc$. The absolutely continuous subspace $\Hc_{ac}(H)$ of $H$ is the closure in $\Hc$ of 
\[
\Hc_{ac}^*(H) := \singl{ \f \in \Hc \st \exists C_\f \geq 0, \forall \p \in \Hc,  \int_0^{+\infty} \abs {\innp{ e^{-itH} \f}\p_\Hc}^2 \, dt \leq C_\f \nr \p_\Hc^2}. 
\]
\end{definition}

For a self-adjoint operator this definition coincide with the usual definition involving the spectral measure (see for instance Proposition 1.7, Theorem 1.3 and Corollary 1.4 in \cite{perry}).\\

In the self-adjoint case, the uniform resolvent estimates and the $L^2(\R_+,\Hc)$ norm of the solution of the time-dependant problem are linked by the theory of relatively smooth operators in the sense of Kato (see \cite{kato66} and \cite[\S XIII.7]{rs4}). It is less known that this link remains valid for dissipative operators.

In order to extend the self-adjoint theory of relative smoothness for a dissipative operator $H$, we use a self-ajdoint dilation of $H$. For the general theory of self-adjoint dilations we refer to \cite{nagyf}. Here we only recall that a maximal dissipative operator $H$ on a Hilbert space $\Hc$ always has a self-adjoint dilation. This means that there exists a self-adjoint operator $\dilH$ on some Hilbert space $\dilHc$ (which contains $\Hc$ as a subspace) such that on $\Lc(\Hc)$ we have 
\begin{eqnarray*}
\forall z \in \C_+ ,&& P_\Hc (\dilH-z)\inv I_\Hc = (H-z)\inv, \\ 
\forall z \in \C_+ ,&& P_\Hc (\dilH-\bar z)\inv I_\Hc = (H^*-\bar z)\inv ,\\ 
\forall t \geq 0 , && P_\Hc e^{-it \dilH} I_\Hc = e^{-itH}, \\ 
\forall t \geq 0 , && P_\Hc e^{it \dilH} I_\Hc = e^{itH^*}, 
\end{eqnarray*}
where $P_\Hc \in \Lc(\dilHc,\Hc)$ denotes the orthogonal projection of $\dilHc$ on $\Hc$ and $I_\Hc \in \Lc(\Hc,\dilHc)$ is the embedding of $\Hc$ in $\dilHc$. An explicit example of (minimal) self-adjoint dilation for the dissipative Schr\"odinger operator on $\R^d$ is given in \cite{pavlov77}.

\begin{proposition} \label{prop-Hac}
Let $Q$ be a closed operator on $\Hc$. Assume that there exists $C \geq 0$ such that for all $z \in \C_+$ and $\f \in \Dom(Q^*)$ we have 
\[
\innp{ \big((H-z)\inv - (H^* - \bar z)\inv \big) Q^* \f}{Q^* \f} _\Hc \leq C \nr{\f}^2_\Hc .
\]
Then for $\p \in \Hc$ we have $e^{-itH}\p \in \Dom(Q)$ for almost all $t \geq 0$ and 
\[
\int_0^{+\infty} \nr{Q e^{-itH} \p}_\Hc^2 dt \leq C \nr \p ^2_\Hc.
\]
We also have $e^{itH^*}\p \in \Dom(Q)$ for almost all $t \geq 0$ and 
\[
\int_0^{+\infty} \nr{Q e^{itH^*} \p}_\Hc^2 dt \leq C \nr \p ^2_\Hc.
\]
\end{proposition}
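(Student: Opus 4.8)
The plan is to reduce the statement to the classical theory of relatively smooth operators in the sense of Kato (\cite{kato66}, \cite[\S XIII.7]{rs4}) by passing to the self-adjoint dilation $\dilH$ of $H$ on $\dilHc \supset \Hc$ recalled above. Write $P_\Hc \in \Lc(\dilHc,\Hc)$ for the orthogonal projection and $I_\Hc \in \Lc(\Hc,\dilHc)$ for the embedding, so $P_\Hc I_\Hc = \Id_\Hc$ and $P_\Hc^* = I_\Hc$. First I would introduce $\hat Q := Q P_\Hc$ on $\dilHc$, with domain $\singl{\hat\f \in \dilHc \st P_\Hc \hat\f \in \Dom(Q)}$: since $Q$ is closed and $P_\Hc$ bounded, $\hat Q$ is closed, and it is densely defined because $\Dom(\hat Q) \supset I_\Hc \Dom(Q) + \Ker(P_\Hc)$. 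A short computation, using that $P_\Hc$ is bounded, then shows $\hat Q^* = P_\Hc^* Q^* = I_\Hc Q^*$ with $\Dom(\hat Q^*) = \Dom(Q^*)$, viewed as a subspace of $\Hc \subset \dilHc$.

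The next step is to transfer the hypothesis to the pair $(\dilH,\hat Q)$. Fix $z \in \C_+$ and $\f \in \Dom(Q^*)$. Using $\innp{\hat g}{I_\Hc h}_\dilHc = \innp{P_\Hc \hat g}{h}_\Hc$ together with the intertwining identities $P_\Hc (\dilH-z)\inv I_\Hc = (H-z)\inv$ and $P_\Hc (\dilH-\bar z)\inv I_\Hc = (H^*-\bar z)\inv$ (valid for $z \in \C_+$), one obtains
\[
\innp{\big((\dilH-z)\inv - (\dilH-\bar z)\inv\big) \hat Q^*\f}{\hat Q^*\f}_\dilHc = \innp{\big((H-z)\inv - (H^*-\bar z)\inv\big) Q^*\f}{Q^*\f}_\Hc ,
\]
and the right-hand side is bounded by $C\nr\f_\Hc^2 = C\nr{I_\Hc\f}_\dilHc^2$ by assumption. (Since $(H^*-\bar z)\inv = \big((H-z)\inv\big)^*$, this quantity equals $2i\innp{\Im\big((H-z)\inv\big)Q^*\f}{Q^*\f}$ and is purely imaginary with non-negative imaginary part, so the assumed inequality is really an upper bound for the non-negative quadratic form $\f \mapsto \innp{\Im\big((H-z)\inv\big)Q^*\f}{Q^*\f}$, which is precisely the object occurring in Kato's criterion.)

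Since this bound holds for every $z$ with $\Im z > 0$, hence, by complex conjugation, for every $z$ with $\Im z \neq 0$, the quadratic-form version of Kato's theorem applies: $\hat Q$ is $\dilH$-smooth, so $e^{-it\dilH}\hat\p \in \Dom(\hat Q)$ for a.e. $t \in \R$ and all $\hat\p \in \dilHc$, and $\int_\R \nr{\hat Q e^{-it\dilH}\hat\p}_\dilHc^2 \, dt \leq C\nr{\hat\p}_\dilHc^2$. I would then specialize to $\hat\p = I_\Hc\p$, $\p \in \Hc$. For $t \geq 0$ the dilation gives $P_\Hc e^{-it\dilH} I_\Hc = e^{-itH}$, hence $e^{-itH}\p = P_\Hc e^{-it\dilH}I_\Hc\p \in \Dom(Q)$ for a.e. $t \geq 0$ and $\hat Q e^{-it\dilH}I_\Hc\p = Q P_\Hc e^{-it\dilH}I_\Hc\p = Q e^{-itH}\p$; restricting the integral to $\R_+$ and using $\nr{I_\Hc\p}_\dilHc = \nr\p_\Hc$ yields the first estimate. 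For the second one I would run the same argument with $e^{it\dilH}$ in place of $e^{-it\dilH}$ (the integral over $\R$ is unchanged under $t \mapsto -t$), now using the identity $P_\Hc e^{it\dilH} I_\Hc = e^{itH^*}$ for $t \geq 0$.

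The delicate point is the quadratic-form bookkeeping: one must use the form of Kato's theorem phrased in terms of the map $\f \mapsto \innp{\big((\dilH-z)\inv - (\dilH-\bar z)\inv\big)\hat Q^*\f}{\hat Q^*\f}$ on $\Dom(\hat Q^*)$, and not in terms of a would-be bounded operator $\hat Q\big((\dilH-z)\inv-(\dilH-\bar z)\inv\big)\hat Q^*$, since $\big((\dilH-z)\inv-(\dilH-\bar z)\inv\big)\hat Q^*\f$ has no reason to lie in $\Dom(\hat Q)$; fortunately the textbook statement is already of this type, so no square-root factorisation is needed. Secondary care is required to check that $\hat Q$ is closed and densely defined with $\hat Q^* = I_\Hc Q^*$ on $\Dom(Q^*)$, and to keep in mind that the intertwining identities for the dilation hold only for $t \geq 0$ — which is harmless because the conclusions only involve integrals over $\R_+$.
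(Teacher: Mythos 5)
Your proof is correct and follows exactly the paper's strategy: extend $Q$ to $\hat Q = Q P_\Hc$ on the self-adjoint dilation, transfer the quadratic-form bound via the intertwining identities, invoke Kato's smoothness theorem (Theorem XIII.25 in \cite{rs4}), and restrict to $\Hc$ and $t \geq 0$. One minor slip: $\Dom(\hat Q^*) = \Dom(Q^*) \oplus \Hc^\bot$ rather than $\Dom(Q^*)$ viewed inside $\Hc$ (so $\hat Q^* = I_\Hc Q^* P_\Hc$), but since $\hat Q^*$ annihilates $\Hc^\bot$ the estimate you prove on $\Dom(Q^*)$ extends trivially to the full domain, so the argument is unaffected.
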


\begin{proof} %[Proof of Proposition \ref{prop-Hac}]
Let $\dilH$ be a self-adjoint dilation of $H$ on a Hilbert space $\dilHc$ which contains $\Hc$ as a subspace. We can write $\dilHc = \Hc \oplus \Hc^\bot$. We extend $Q$ as an operator $\hat Q$ on $\dilHc$ by 0 on $\Hc^\bot$. Then $\hat Q$ is a closed operator on $\dilHc$ with domain $\Dom(\hat Q) = \Dom(Q) \oplus \Hc^\bot$. Then for all $z \in \C_{+}$ and $\hat \f = (\f,\f^\bot), \hat \p = (\p ,\p^\bot) \in \Dom(\hat Q)$ we have
\begin{align*}
\innp {\big( (\dilH-z)\inv - (\dilH - \bar z) \inv \big) \hat Q^* \hat \f} {\hat Q^* \hat \p} _\dilHc
& = \innp {\big( (H-z)\inv - (H^* - \bar z) \inv \big)  Q^*  \f} { Q^*  \p} _\Hc\\
& \leq C \nr {\f}_\Hc \nr { \p}_\Hc  \leq C \nr {\hat \f}_\dilHc \nr {\hat \p}_\dilHc.
\end{align*}
Let $\hat \z \in \dilHc$. According to Theorem XIII.25 in \cite{rs4} we have $e^{-it\dilH}\hat \z \in \Dom(\hat Q)$ for almost all $t \in \R$ and
\[
\int_\R \nr{\hat Q e^{-it\dilH} \hat \z}^2_{\dilHc} \, dt \leq C\Vert \hat \z \Vert^2_{\dilHc}.
\]
Now let $\f \in \Hc$ and $\hat \f = (\f,0) \in \dilHc$. We have $e^{-itH} \f = P_\Hc e^{-it\dilH}\hat \f \in P_\Hc \Dom(\hat Q) = \Dom(Q)$ for almost all $t \geq 0$ and moreover
\[
\int_0^{+\infty} \nr{Q e^{-itH} \f}^2_\Hc \, dt = \int_0^{+\infty} \nr{\hat Q e^{-it\dilH} \hat\f}^2_\dilHc \, dt \leq C \Vert \hat \f\Vert^2_{\dilHc} = C \nr\f_\Hc^2.
\]
We conclude similarly for the integral of $\nr{Q e^{itH^*}\f}^2_\Hc$.
\end{proof}

\begin{corollary} \label{cor-Hac}
Under the assumptions of Proposition \ref{prop-Hac} we have $\Ran(Q^*) \subset \Hc_{ac}^* (H)$.
\end{corollary}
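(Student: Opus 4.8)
The plan is to unwind the definition of $\Hc_{ac}^*(H)$ and feed it the second conclusion of Proposition \ref{prop-Hac}. Fix $g \in \Dom(Q^*)$ and set $\f = Q^* g \in \Ran(Q^*)$; I must exhibit a constant $C_\f \geq 0$ such that $\int_0^{+\infty} \abs{\innp{e^{-itH}\f}{\p}_\Hc}^2\,dt \leq C_\f \nr\p_\Hc^2$ for every $\p \in \Hc$.

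First I would use the adjoint relation $(e^{-itH})^* = e^{itH^*}$ to rewrite, for $\p \in \Hc$ fixed,
\[
\innp{e^{-itH}\f}{\p}_\Hc = \innp{Q^* g}{e^{itH^*}\p}_\Hc .
\]
By the last statement of Proposition \ref{prop-Hac}, $e^{itH^*}\p \in \Dom(Q)$ for almost every $t \geq 0$, so for such $t$ the pairing becomes $\innp{g}{Q e^{itH^*}\p}_\Hc$, and the Cauchy--Schwarz inequality gives $\abs{\innp{e^{-itH}\f}{\p}_\Hc}^2 \leq \nr g_\Hc^2\,\nr{Q e^{itH^*}\p}_\Hc^2$ for a.e. $t \geq 0$. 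Integrating over $t$ and invoking the quantitative bound $\int_0^{+\infty} \nr{Q e^{itH^*}\p}_\Hc^2\,dt \leq C\nr\p_\Hc^2$ from the same proposition, I obtain $\int_0^{+\infty}\abs{\innp{e^{-itH}\f}{\p}_\Hc}^2\,dt \leq C\nr g_\Hc^2\nr\p_\Hc^2$. Hence $\f \in \Hc_{ac}^*(H)$ with $C_\f = C\nr g_\Hc^2$, and since $g \in \Dom(Q^*)$ was arbitrary this is exactly $\Ran(Q^*) \subset \Hc_{ac}^*(H)$.

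The argument is short and I do not expect a genuine obstacle; the only point deserving a word is measurability. One should note that $t \mapsto Q e^{itH^*}\p$ is, up to a null set, a well-defined $\Hc$-valued map with square-integrable norm (this is implicit in the statement of Proposition \ref{prop-Hac}), so that $t \mapsto \innp{e^{-itH}\f}{\p}_\Hc$ is measurable and the integral above makes sense. Everything else is the elementary chain of identities together with Cauchy--Schwarz displayed above.
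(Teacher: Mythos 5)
Your proof is correct and is essentially the paper's own argument: write $\f = Q^*g$, pass the adjoint through to get $\innp{g}{Qe^{itH^*}\p}_\Hc$ (using the almost-everywhere domain membership from Proposition \ref{prop-Hac}), then apply Cauchy--Schwarz and the integral bound for $\nr{Qe^{itH^*}\p}_\Hc^2$. Your added remark on measurability and the explicit use of $(e^{-itH})^* = e^{itH^*}$ only makes explicit what the paper leaves implicit.
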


\begin{proof}
Let $\f \in \Ran(Q^*)$ and $\z \in \Hc$ be such that $\f = Q^* \z$. Then for $\p \in \Hc$ we have 
\[
\int_0^{+\infty} \abs{\innp{e^{-itH}\f}{\p}_\Hc}^2\, dt \leq \int_{0}^{+\infty} \nr{\z}_\Hc^2 \nr{Q e^{itH^*} \p}_\Hc^2 \,dt \leq C \nr{\z}_\Hc^2 \nr{\p}_\Hc^2. \qedhere
\]
\end{proof}

Theorem \ref{th-mourre-form} gives an estimate as in Proposition \ref{prop-Hac} with $Q = \pppg {A}^{-\d}$ but only for $z \in \C_{I,+}$ for some interval $I$. In order to obtain an estimate for all $z \in \C_+$ we have to localize spectrally. For this we are going to use a function of the self-adjoint part $H_0$ of $H$. We first prove the following lemma:

\begin{lemma} \label{lem-comm-AchiH}
Let $N\in\N^*$. Similarly to Definition \ref{def-conj-N}, assume inductively that the commutators $B_n^0 := \ad^n_{iA}(\tilde H_0)$, at least defined as operators in $\Lc(\Ec,\Ec^*)$, extend to bounded operators on $\Lc(\Kc,\Kc^*)$ for $n=1,\dots,N$. Then for all $\d \in [-N,N]$ and $\h \in C_0^\infty(\R)$ the operator $\pppg A^{-\d} \h(H_0) \pppg A^\d$ extends to a bounded operator in $\Lc(\Kc^*,\Kc)$.
\end{lemma}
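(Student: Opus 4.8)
## Proof Strategy for Lemma \ref{lem-comm-AchiH}

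The plan is to reduce the statement to a commutator estimate of $A$ with a single resolvent $(H_0-i)\inv$, and then iterate. First I would write $\h(H_0)$ through the Helffer--Sj\"ostrand formula (or, equivalently as in the proof of Lemma \ref{lem-comm-MeA}, via the Fourier transform $\h(H_0) = \frac{1}{\sqrt{2\pi}}\int_\R e^{itH_0}\hat\h(t)\,dt$), so that everything is expressed in terms of the unitary group $e^{itH_0}$ and its commutator with $A$. The key building block is the estimate, already established in the course of the proof of Lemma \ref{lem-comm-MeA} (see \eqref{estim-eHo-A}), that $[e^{itH_0},A]$ extends to an operator in $\Lc(\Kc,\Kc^*)$ with norm $\lesssim |t|\,\nr{B_0}_{\Lc(\Kc,\Kc^*)}$; under the present hypothesis that $B_n^0 = \ad_{iA}^n(\tilde H_0)$ extends to $\Lc(\Kc,\Kc^*)$ for $n\le N$, the same argument iterated $n$ times shows that $\ad_A^n(e^{itH_0})$ extends to $\Lc(\Kc,\Kc^*)$ with norm $\lesssim |t|^n$. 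Since $\hat\h$ is Schwartz, $\int_\R |t|^n |\hat\h(t)|\,dt < \infty$ for every $n$, so $\ad_A^n(\h(H_0))$ extends to an operator in $\Lc(\Kc,\Kc^*)$ for $n\in\Ii 0 N$.

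Next I would treat the two cases $\d\ge 0$ and $\d<0$; by taking adjoints and replacing $\h$ by $\bar\h$ it suffices to handle $\d\in[0,N]$. For integer $\d = n\le N$, I would expand $\pppg A^{-n}\h(H_0)\pppg A^n$ by commuting the powers of $\pppg A^n$ through $\h(H_0)$: formally, $\pppg A^{-n}\h(H_0)\pppg A^n = \sum_{k=0}^n \binom{n}{k}\,\pppg A^{-n}\,(\ad_{\pppg A}^k \h(H_0))\,\pppg A^{n-k}$, but it is cleaner to argue inductively --- $\pppg A^{-1}\h(H_0)\pppg A = \h(H_0) + \pppg A^{-1}[\h(H_0),\pppg A]$, and the commutator with $\pppg A$ is controlled by the commutators with $A$ via a further representation of $\pppg A$ as $\frac{1}{\pi}\int_0^\infty \lambda^{-1/2}(A^2 + 1 + \lambda)\inv(\dots)$, or simply by writing $[\h(H_0),\pppg A]\pppg A\inv$ in terms of $[\h(H_0),A]$ and boundedness of $A\pppg A\inv$ on $\Hc$. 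At each step one uses that the freshly produced commutator lies in $\Lc(\Kc^*,\Kc)$ (which follows from the first paragraph, after conjugating $\h(H_0)$ by $(\Ho-i)\inv$ as in the last steps of the proof of Lemma \ref{lem-comm-MeA}, since $(\Ho-i)\inv\in\Lc(\Kc^*,\Kc)$) while the remaining factor of $\pppg A^{\pm m}$ is absorbed by a lower-order instance of the same claim. For non-integer $\d$, I would interpolate between the integer endpoints: the operators $\pppg A^{-j}\h(H_0)\pppg A^j$ for $j=0,\dots,N$ are bounded $\Kc^*\to\Kc$, and complex interpolation along the analytic family $s\mapsto \pppg A^{-s}\h(H_0)\pppg A^s$ gives the bound for all $\d\in[0,N]$ (alternatively, one can avoid interpolation by enlarging $N$ to $\lceil\d\rceil$ since only finitely many commutators are needed).

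The main obstacle I anticipate is \emph{bookkeeping the spaces}: the commutators $\ad_A^n(\h(H_0))$ naturally live in $\Lc(\Ec,\Ec^*)$ a priori, and one must carefully justify that they extend to $\Lc(\Kc,\Kc^*)$ (via the Fourier integral and density of $\Ec$ in $\Kc$), and then that inserting two resolvents $(\Ho-i)\inv$ upgrades this to $\Lc(\Kc^*,\Kc)$ so that the factors of $\pppg A^{\pm m}$ can be moved around without the compositions failing to make sense. This is precisely the kind of functional-analytic care exercised in the proof of Lemma \ref{lem-comm-MeA}, and the argument here is a direct adaptation of it; the conjugate-operator hypothesis on $\tilde H_0$ is exactly what makes the iteration close. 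Once the space-tracking is done, the combinatorial expansion and the Schwartz-decay estimate on $\hat\h$ are routine.
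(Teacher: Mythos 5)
Your overall architecture (functional-calculus representation of $\h(H_0)$, induction on the number of commutators with $A$, reduction of non-integer $\d$ to integer powers plus interpolation) is the same as the paper's, but the concrete mechanism you rely on has a genuine gap in this \emph{form} setting. You take as key building block the estimate \eqref{estim-eHo-A} for $[e^{it\Ho},A]$ and claim that ``the same argument iterated $n$ times'' gives $\ad_A^n(e^{it\Ho})\in\Lc(\Kc,\Kc^*)$ with norm $\lesssim \abs t^n$. Iterating the Duhamel formula, however, produces terms containing products of the form $B_0^0\, e^{is\Ho}\, B_0^0$ (more generally $B^0_{n_1} e^{is\Ho} B^0_{n_2}$): here $B_0^0\in\Lc(\Kc,\Kc^*)$ and $e^{is\Ho}$ only preserves $\Kc$ and $\Kc^*$, it does not map $\Kc^*$ back into $\Kc$, so the second factor $B_0^0$ cannot act on the output of the first. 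This is exactly the point where the form-perturbation setting differs from the operator setting of \cite{art-mourre,boucletr14} (where $B_0$ would map into $\Hc$ and the chaining through the group is harmless); the first commutator in Lemma \ref{lem-comm-MeA} is fine because only one factor $B_0$ appears, but the iteration does not close.

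The paper's proof avoids this by using the Helffer--Sj\"ostrand formula with an almost analytic extension vanishing to order $m\geq N+1$: the $n$-th commutator is a sum of terms
\[
\int_\C \frac{\partial\tilde\h}{\partial\bar\z}(\z)\,(\tHo-\z)\inv B^0_{n_1}(\tHo-\z)\inv\cdots B^0_{n_p}(\tHo-\z)\inv\,dx\,dy ,
\]
in which each $B^0_{n_j}\in\Lc(\Kc,\Kc^*)$ is sandwiched between resolvents $(\tHo-\z)\inv\in\Lc(\Kc^*,\Kc)$, so all compositions make sense; the price is the factor $\abs{\Im\z}^{-(p+1)}$, which is absorbed by the $\abs{\Im\z}^m$ decay of $\partial_{\bar\z}\tilde\h$. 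This also yields $\ad_{iA}^n(\h(\Ho))\in\Lc(\Kc^*,\Kc)$ directly, with no need for your extra conjugation by $(\Ho-i)\inv$. You do mention Helffer--Sj\"ostrand as an ``equivalent'' alternative, but the two representations are not interchangeable here precisely because of these domain issues; if you run your induction on the resolvent-based formula (keeping track of the order of vanishing of the almost analytic extension, which must be at least $N+1$), the rest of your argument --- integer $\d$ by commuting powers of $\pppg A$ through $\h(\Ho)$, then interpolation --- goes through and matches the paper.
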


\begin{proof}
We consider an almost analytic extension $\tilde \h$ of $\h$ (see \cite{dimassis, davies95}):
\begin{equation*} 
 \tilde \h (x+iy) = \p(y) \sum_{k=0}^m \h^{(k)} (x) \frac {(iy)^k}{k!}
\end{equation*}
where $m \geq N+1$ and $\p \in C_0^\infty(\R,[0,1])$ is supported on $[-2,2]$ and equal to 1 on $[-1,1]$. We have
\begin{align*}
 \frac {\partial \tilde \h} {\partial \bar \z} (x+iy) = \frac {i \p'(y)} 2  \sum_{k=0}^m \h^{(k)} (x) \frac {(iy)^k}{k!} + \frac {\p(y)} 2  \h^{(m+1)} (x) \frac {(iy)^m}{m!} ,
\end{align*}
and in particular for $\z \in \C$
\begin{equation} \label{estim-tildef}
\begin{aligned}
\abs{\frac {\partial \tilde \h}{\partial \bar \z} (\z)} \lesssim   \abs{\Im \z}^m  \1 { \singl{\Re(\z) \in \supp(\h) , \abs{\Im(\z)} \leq 2}}(\z).
\end{aligned}
\end{equation}
Thus we can write the Helffer-Sj\"ostrand formula for $\h(H_0)$:
\begin{equation*}%\label{helffer-sjostrand}
\h(H_0) = - \frac 1 \pi \int_{\z = x + i y \in \C} \frac {\partial \tilde \h}{\partial \bar \z} (\z) (\tilde H_0-\z)\inv \, dx \, dy.
\end{equation*}
Then we can check by induction on $n\in\Ii 1 N$ that the commutator $\ad_{iA}^n(\h(H_0))$ can be written as a sum of terms of the form 
\[
\pm \frac 1 \pi \int_{\C} \frac {\partial \tilde \h}{\partial \bar \z} (\z) (\tilde H_0-\z)\inv B_{n_1}^0 (\tilde H_0-\z)\inv B_{n_2}^0 \dots(\tilde H_0-\z)\inv B_{n_p}^0 (\tilde H_0-\z)\inv \, dx \, dy
\]
where $p \in \Ii 1 {n}$ and $n_1,\dots,n_p \in \N^*$ are such that $n_1 + \dots + n_p = n$. For $\z \in \supp(\tilde \h)$ and $\Im(\z) \neq 0$ we have 
\[
\nr{(\tilde H_0-\z)\inv}_{\Lc(\Kc^*,\Kc)} \lesssim \frac 1 {\Im(\z)}
\]
so with \eqref{estim-tildef} we see that $\ad_{iA}^n(\h(H_0))$ extend to an operator in $\Lc(\Kc^*,\Kc)$ for $n \in \Ii 0 N$. Thus $\pppg A^{-\d} \h(H_0) \pppg A^\d$ extends to a bounded operator for $\d \in \Ii {-N} N$, and we conclude by interpolation.
\end{proof}

Now we can prove the main result of this section:

\begin{theorem} \label{th-abs-spectrum}
Assume that $A$ is a conjugate operator to $H$ on the open interval $J$, and let $\d > \frac 12$. Then $\Ran \big(\1 J (H_0)\pppg A^{-\d} \big) \subset \Hc_{ac}(H)$ .
\end{theorem}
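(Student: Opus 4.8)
The plan is to combine Theorem \ref{th-mourre-form} with Proposition \ref{prop-Hac} (and Corollary \ref{cor-Hac}), but since Theorem \ref{th-mourre-form} only produces a resolvent estimate for $z$ with $\Re(z)$ in a compact subinterval $I$ of $\mathring J$, the first task is to promote this to an estimate valid for \emph{all} $z \in \C_+$ after inserting a suitable spectral cutoff. The natural candidate operator to feed into Proposition \ref{prop-Hac} is $Q^* = \1 J(H_0)\pppg A^{-\d}$, or rather a smoothed version $Q^* = \h(H_0)\pppg A^{-\d}$ with $\h \in C_0^\infty(\mathring J,[0,1])$; once we know $\Ran(\h(H_0)\pppg A^{-\d})\subset \Hc_{ac}^*(H)$ for every such $\h$, a density/approximation argument (letting $\h \uparrow \1 J$) together with the fact that $\Hc_{ac}(H)$ is closed gives $\Ran(\1 J(H_0)\pppg A^{-\d})\subset \Hc_{ac}(H)$.

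So fix $\h \in C_0^\infty(\mathring J,[0,1])$, choose a compact interval $I\subset \mathring J$ with $\supp(\h)\subset \mathring I$, and set $Q = \pppg A^{-\d}\h(H_0)$ (formally; one must check $Q$ is closed, which is routine since $\h(H_0)$ is bounded and $\pppg A^{-\d}$ is bounded, so in fact $Q\in\Lc(\Hc)$ and $Q^* = \h(H_0)\pppg A^{-\d}$). We must verify the hypothesis of Proposition \ref{prop-Hac}: for all $z\in\C_+$ and $\f\in\Hc$,
\[
\innp{\big((H-z)\inv - (H^*-\bar z)\inv\big)Q^*\f}{Q^*\f}_\Hc \leq C\nr\f_\Hc^2.
\]
Write $Q^*\f = \h(H_0)\pppg A^{-\d}\f$. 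The idea is to move one factor $\h(H_0)$ through: using Lemma \ref{lem-comm-AchiH} (with $N=1$, using only $B_1^0 = B_0\in\Lc(\Kc,\Kc^*)$, which holds since $A$ is conjugate to $H$), the operator $\pppg A^{-\d}\h(H_0)\pppg A^\d$ extends to a bounded operator in $\Lc(\Kc^*,\Kc)$, hence (composing with the embeddings) is bounded on $\Hc$; call it $R$. Then $Q^*\f = \pppg A^{-\d}\,\widetilde\h(H_0)\cdot\big(\pppg A^{-\d}\f\big)\cdot(\text{corrections})$ — more precisely I would write $\h = \h_1\h_2$ with $\h_1,\h_2\in C_0^\infty(\mathring I)$ and $\h_2\equiv 1$ on $\supp(\h)$, so that $\h(H_0) = \h_1(H_0)\h_2(H_0)$, and estimate
\[
\innp{\big((H-z)\inv-(H^*-\bar z)\inv\big)Q^*\f}{Q^*\f}
= \innp{\big((H-z)\inv-(H^*-\bar z)\inv\big)\h_1(H_0)g}{\h_1(H_0)g},
\]
with $g = \h_2(H_0)\pppg A^{-\d}\f$. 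Now the point is that for $\Re(z)\notin \bar I$ (where $\h_1$ vanishes off $I$), spectral calculus of $H_0$ combined with the resolvent identity \eqref{eq-res-identity} shows $\h_1(H_0)(H-z)\inv\h_1(H_0)$ is uniformly bounded on $\Hc$ (indeed analytic and $O(1)$ there), because $(H-z)\inv = (H_0-z)\inv + i(H_0-z)\inv\Th(H-z)\inv$ and $\h_1(H_0)(H_0-z)\inv$ is controlled away from $\bar I$; for $\Re(z)\in \bar I$ we invoke Theorem \ref{th-mourre-form} directly on $I$ to bound $\pppg A^{-\d}(H-z)\inv\pppg A^{-\d}$, having first absorbed $\h_2(H_0)$ and the commutator correction $R$ into the constant via Lemma \ref{lem-comm-AchiH}. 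Either way one gets a bound $\leq C\nr g_\Hc^2 \leq C'\nr\f_\Hc^2$, uniformly in $z\in\C_+$.

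Granting that estimate, Corollary \ref{cor-Hac} yields $\Ran(Q^*) = \Ran(\h(H_0)\pppg A^{-\d})\subset \Hc_{ac}^*(H)$. Finally, for general $\f\in\Hc$, choosing $\h_k\in C_0^\infty(\mathring J,[0,1])$ with $\h_k\to \1 J$ pointwise and $0\le\h_k\le 1$, we get $\h_k(H_0)\pppg A^{-\d}\f \to \1 J(H_0)\pppg A^{-\d}\f$ in $\Hc$ by dominated convergence in the spectral measure of $H_0$ (using that $H_0$ has no eigenvalue at the endpoints of $J$ is not even needed here since $J$ is open, so $\1 J(H_0)\h_k(H_0)=\h_k(H_0)$); since each $\h_k(H_0)\pppg A^{-\d}\f\in\Hc_{ac}^*(H)\subset\Hc_{ac}(H)$ and $\Hc_{ac}(H)$ is closed, the limit $\1 J(H_0)\pppg A^{-\d}\f$ lies in $\Hc_{ac}(H)$, proving $\Ran(\1 J(H_0)\pppg A^{-\d})\subset \Hc_{ac}(H)$.

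The main obstacle I anticipate is the \emph{uniformity in $z\in\C_+$} of the quadratic-form estimate — specifically, handling the region where $\Re(z)$ is near the boundary of $J$ (or outside it) where Theorem \ref{th-mourre-form} gives nothing directly, and making sure the spectral cutoff $\h(H_0)$ genuinely kills the resolvent there despite the non-self-adjoint perturbation $\Th$. This requires care with the resolvent identity \eqref{eq-res-identity} in $\Lc(\Kc^*,\Kc)$ and with the commutator bookkeeping from Lemma \ref{lem-comm-AchiH}, but no essentially new idea beyond what is already in the paper.
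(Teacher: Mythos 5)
Your overall architecture matches the paper's: replace $\1 J$ by a smooth cutoff $\h\in C_0^\infty(\mathring J)$, use Lemma \ref{lem-comm-AchiH} to commute $\h(H_0)$ past $\pppg A^{-\d}$, invoke Theorem \ref{th-mourre-form} for $\Re(z)$ in a compact subinterval, handle $\Re(z)$ away from that subinterval separately, feed the resulting uniform bound into Corollary \ref{cor-Hac}, and then take a limit $\h_k\uparrow\1 J$ using that $\Hc_{ac}(H)$ is closed. All of that is sound. The genuine gap is in the ``away'' region. You assert that for $\Re(z)\notin\bar I$ the operator $\h_1(H_0)(H-z)^{-1}\h_1(H_0)$ is ``analytic and $O(1)$'' because of the resolvent identity \eqref{eq-res-identity}. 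That does not follow: after inserting the cutoffs, the correction term is $\h_1(H_0)(\tilde H_0-z)^{-1}\Th\,(\tilde H-z)^{-1}\h_1(H_0)$, and the factor $(\tilde H-z)^{-1}$ is the \emph{full} dissipative resolvent, whose only a priori bound is $1/\Im(z)$. The spectral cutoff on $H_0$ sits on the wrong side of $\Th$ to tame it (and writing the resolvent identity the other way simply moves the untamed $(\tilde H-z)^{-1}$ to the left). Functional calculus controls $\h_1(H_0)(H_0-z)^{-1}$, not $(\tilde H-z)^{-1}\h_1(H_0)$.

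The ingredient you are missing is the quadratic estimate, Proposition \ref{prop-estim-quad-form}, and the bootstrap it enables. After the resolvent identity one applies Cauchy--Schwarz for the nonnegative form $q_\Th$, so the dangerous contribution is bounded by
\[
q_\Th\big((\tilde H-z)^{-1}\h(H_0)\pppg A^{-\d}\f\big)^{1/2}\;
q_\Th\big((\tilde H_0-\bar z)^{-1}\h(H_0)\pppg A^{-\d}\p\big)^{1/2}.
\]
The second factor is $\lesssim\nr\p_\Hc$ by \eqref{qI-qO-bounded} and the $H_0$-functional calculus away from $\supp\h$. The first factor, however, is controlled by Proposition \ref{prop-estim-quad-form} in terms of $\abs{\innp{(\tilde H-z)^{-1}\h(H_0)\pppg A^{-\d}\f}{\h(H_0)\pppg A^{-\d}\f}}$, which is $\nr\f_\Hc^2$ times the very operator norm $X=\nr{\pppg A^{-\d}\h(H_0)(H-z)^{-1}\h(H_0)\pppg A^{-\d}}$ one is trying to bound. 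This yields the self-improving inequality $X\lesssim 1 + X^{1/2}$, hence $X\lesssim 1$ uniformly. Your closing remark that ``no essentially new idea'' is needed beyond the resolvent identity and the commutator lemma is where the gap lies: the bootstrap through Proposition \ref{prop-estim-quad-form} is exactly the idea required, and without naming it the step where the cutoff ``genuinely kills the resolvent despite $\Th$'' remains unproved.
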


\begin{proof} 
Let $I$ and $I'$ be compact intervals such that $I \subset \mathring I' \subset I' \subset J$. Let $\h \in C_0^\infty(\R)$ be supported in $\mathring I$ and equal to 1 on a neighborhood of $I$. According to Lemma \ref{lem-comm-AchiH} and Theorem \ref{th-mourre-form} (and Remark \ref{rem-adjoint}) the operator
\[
\pppg A^{-\d} \h(H_0) (H-z)\inv \h(H_0) \pppg A^{-\d}
\]
and its adjoint are bounded in $\Lc(\Hc)$ uniformly in $z \in \C_{I',+}$. Then for $z \in \C_{\R \setminus I', +}$ and $\f,\p \in \Hc$ we have by the resolvent identity (see \eqref{eq-res-identity}):
\begin{eqnarray*}
\lefteqn{\abs{\innp{\pppg A^{-\d} \h(H_0) (H-z)\inv \h(H_0) \pppg A^{-\d} \f} \p _\Hc}}\\
&& \leq \abs{\innp{\pppg A^{-\d} \h(H_0) (H_0-z)\inv \h(H_0) \pppg A^{-\d} \f} \p _\Hc}\\
&& \quad  +  q_\Th \big( (H-z)\inv \h(H_0) \pppg A^{-\d}\f,  (H_0- \bar z)\inv \h(H_0) \pppg A^{-\d}\p\big)\\
&& \lesssim \nr\f \nr \p + q_\Th \big( (H-z)\inv \h(H_0) \pppg A^{-\d}\f \big)^{\frac 12} q_\Th \big(  (H_0- \bar z)\inv \h(H_0) \pppg A^{-\d}\p \big)^{\frac 12} 
\end{eqnarray*}
According to Proposition \ref{prop-estim-quad-form} and \eqref{qI-qO-bounded} we have
\[
\nr{\pppg A^{-\d} \h(H_0) (H-z)\inv \h(H_0) \pppg A^{-\d}} \lesssim  1 + \nr{\pppg A^{-\d} \h(H_0) (H-z)\inv \h(H_0) \pppg A^{-\d}}^{\frac 12} ,
\]
and hence
\[
\nr{\pppg A^{-\d} \h(H_0) (H-z)\inv \h(H_0) \pppg A^{-\d}} \lesssim 1. 
\]
Since we have the same estimate for $(H^*-\bar z)\inv$ instead of $(H-z)\inv$, we conclude with Corollary \ref{cor-Hac} that $\Ran(\h(H_0) \pppg A^{-\d}) \subset \Hc_{ac}^*(H)$.  Since $\Hc_{ac}(H_a)$ is closed in $\Hc$ by definition, the result follows.
\end{proof}

We go back to the Schr\"odinger operator on the dissipative wave guide discussed in Section \ref{sec-guide}, see \eqref{def-Ha}-\eqref{def-Dom-Ha}. In \cite{art-diss-schrodinger-guide} we have proved that under a strong assumption on the absorption index then the norm of the solution of the Schr\"odinger equation \eqref{eq-schrodinger} decays exponentially, which implies in particular that $\Hc_{ac}(H_a) = L^2(\O)$. In general we cannot expect such a fast decay but the result concerning the $L^2_t \big(\R_+,L^2(\O)\big)$ norm remains valid:

\begin{proposition}
With the notation of Section \ref{sec-guide} we have $\Hc_{ac}(H_a) = L^2(\O)$.
\end{proposition}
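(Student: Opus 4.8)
The plan is to obtain this as a consequence of Theorem~\ref{th-abs-spectrum}, by covering the spectrum of the self-adjoint part $H_0$ with intervals on which the dissipative Mourre theory applies.

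First I would note that, by Proposition~\ref{prop-guide}, the generator of dilations $A=A_x$ is a conjugate operator to $H_a$ (in the sense of Definition~\ref{def-conj}) on every compact interval $K\subset\R_+^*\setminus\Tc$, and hence also on every open interval $J$ whose closure is a compact subset of $\R_+^*\setminus\Tc$: assumption~\eqref{hyp-mourre} is monotone with respect to the interval, since $\1{J}(H_0)=\1{J}(H_0)\1{K}(H_0)$ when $J\subset K$, so a Mourre estimate on $K$ restricts to one on $J$. Fixing $\d\in\big]\frac 12,1\big]$, Theorem~\ref{th-abs-spectrum} then yields $\Ran\big(\1{J}(H_0)\pppg A^{-\d}\big)\subset\Hc_{ac}(H_a)$ for every such interval $J$.

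Next I would pick a countable family $(J_k)_{k\in\N}$ of such intervals with $\bigcup_{k\in\N}J_k=\R_+^*\setminus\Tc$; this is possible because $\Tc$, being the set of eigenvalues of $L_\o$ (which has compact resolvent), is a discrete subset of $\R_+$, so $\R_+^*\setminus\Tc$ is a countable union of bounded open intervals whose closures avoid $\Tc$. Then I would argue by duality: if $\p\in L^2(\O)$ is orthogonal to $\Hc_{ac}(H_a)$, it is in particular orthogonal to $\Ran\big(\1{J_k}(H_0)\pppg A^{-\d}\big)$, so $\big(\1{J_k}(H_0)\pppg A^{-\d}\big)^*\p=\pppg A^{-\d}\1{J_k}(H_0)\p=0$, and since $\pppg A^{-\d}$ is injective this gives $\1{J_k}(H_0)\p=0$, for every $k$. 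Because $\Ran\big(\1{J_1\cup\dots\cup J_k}(H_0)\big)=\sum_{j=1}^k\Ran\big(\1{J_j}(H_0)\big)$ and $\1{J_1\cup\dots\cup J_k}(H_0)\to\1{\R_+^*\setminus\Tc}(H_0)$ strongly as $k\to\infty$, this forces $\1{\R_+^*\setminus\Tc}(H_0)\p=0$.

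Finally I would observe that $\1{\R_+^*\setminus\Tc}(H_0)=\Id$. Indeed $\1{\R_-}(H_0)=0$ since $\s(H_0)=\R_+$, while $H_0$ has no eigenvalue (shown in Section~\ref{sec-guide}), so for each $\f\in L^2(\O)$ the measure $B\mapsto\nr{\1{B}(H_0)\f}^2$ is atomless and therefore vanishes on the countable set $\{0\}\cup\Tc$; hence $\1{\R_-\cup\{0\}\cup\Tc}(H_0)=0$. It follows that $\p=0$, so $\Hc_{ac}(H_a)^\perp=\{0\}$, and since $\Hc_{ac}(H_a)$ is closed by Definition~\ref{def-Hac} we conclude $\Hc_{ac}(H_a)=L^2(\O)$. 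I do not expect a genuine obstacle in this argument; the only points that call for a little care are the passage from the compact intervals produced by Proposition~\ref{prop-guide} to the open intervals required by Theorem~\ref{th-abs-spectrum}, and the remark that the absence of eigenvalues of $H_0$ already rules out any spectral mass on the whole countable threshold set.
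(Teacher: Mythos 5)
Your proposal is correct and follows essentially the same route as the paper: apply Theorem~\ref{th-abs-spectrum} on intervals avoiding the thresholds, use the absence of eigenvalues of $H_0$ (and the discreteness of $\Tc$) to see that these ranges span a dense subspace, and conclude by closedness of $\Hc_{ac}(H_a)$. The only difference is that you spell out two points the paper leaves implicit --- restricting the Mourre estimate from the compact intervals of Proposition~\ref{prop-guide} to open subintervals, and proving the density claim by the orthogonality/injectivity argument --- both of which are handled correctly.
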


\begin{proof}
We recall that $\Tc$ is the (discrete) set of thresholds. Let $\d > \frac 12$. If $J \Subset \R \setminus \Tc$ then according to Theorem \ref{th-abs-spectrum} we have $\Ran \big(\1 J (H_0) \pppg {A_x}^{-\d}\big) \subset \Hc_{ac}(H_a)$. Since $H_0$ has no eigenvalue, the union of these sets for all suitable $J$ is dense in $L^2(\O)$. Since $\Hc_{ac}(H_a)$ is closed in $L^2(\O)$, the result follows.
\end{proof}

\bibliographystyle{alpha}
\bibliography{bibliotex}

\end{document}